\newtheorem{theorem}{Theorem}[section]
\newtheorem{lemma}[theorem]{Lemma}
\newtheorem{corollary}[theorem]{Corollary}
\newtheorem{proposition}[theorem]{Proposition}
\newtheorem{conjecture}[theorem]{Conjecture}
\newtheorem{prop}[theorem]{Proposition}
\theoremstyle{definition}
\newtheorem{definition}[theorem]{Definition}
\newtheorem{example}[theorem]{Example}
\newtheorem{remark}[theorem]{Remark}
\newtheorem{Expectation}[theorem]{Expectation}
\newcommand{\Aff}{{\mathbb A}}
\newcommand{\C}{{\mathbb C}}
\newcommand{\bC}{{\mathbb C}}
\newcommand{\F}{{\mathbb F}}
\newcommand{\G}{{\mathbb G}}
\newcommand{\PP}{{\mathbb P}}
\newcommand{\bP}{{\mathbb P}}
\newcommand{\Q}{{\mathbb Q}}
\newcommand{\R}{{\mathbb R}}
\newcommand{\Z}{{\mathbb Z}}
\def\bbar#1{\setbox0=\hbox{$#1$}\dimen0=.2\ht0 \kern\dimen0 
\overline{\kern-\dimen0 #1}}
\newcommand{\kbar}{{\bbar{k}}}
\newcommand{\fp}{{\mathfrak p}}
\newcommand{\sF}{{\mathscr F}}
\newcommand{\sL}{{\mathscr L}}
\newcommand{\calA}{{\mathcal A}}
\newcommand{\calB}{{\mathcal B}}
\newcommand{\calC}{{\mathcal C}}
\newcommand{\tcalC}{\widetilde{\calC}}
\newcommand{\calD}{{\mathcal D}}
\newcommand{\calE}{{\mathcal E}}
\newcommand{\calI}{{\mathcal I}}
\newcommand{\calL}{{\mathcal L}}
\newcommand{\calO}{{\mathcal O}}
\newcommand{\calT}{{\mathcal T}}
\newcommand{\calX}{{\mathcal X}}
\newcommand{\logct}{\Omega_{X}^{1}(\log D)}
\DeclareMathOperator{\supp}{supp}
\DeclareMathOperator{\divv}{div}
\DeclareMathOperator{\ord}{ord}
\DeclareMathOperator{\Sym}{Sym}
\DeclareMathOperator{\Pic}{Pic}
\DeclareMathOperator{\Jac}{Jac}
\DeclareMathOperator{\Spec}{Spec}
\DeclareMathOperator{\Proj}{Proj}
\begin{document}
%%%
%%%
%%%
\title[Hyperbolicity of varieties of log general type]{Hyperbolicity of varieties of log general type}
\author{Kenneth Ascher}
\address{Department of Mathematics, Princeton University, Princeton, NJ 08544, USA}
\email{kascher@princeton.edu}
\author{Amos Turchet}
\address{
%Amos Turchet \newline
Centro di Ricerca Matematica Ennio De Giorgi, Scuola Normale Superiore, Palazzo Puteano, Piazza dei cavalieri, 3, 56100 Pisa, Italy}
\email{amos.turchet@sns.it}

\maketitle
\setcounter{tocdepth}{1}
%\tableofcontents

\section{Introduction}\label{sec:intro}
Diophantine Geometry aims to describe the sets of rational and/or integral points on a variety. More precisely one would like \emph{geometric} conditions on a variety $X$ that determine the distribution of rational and/or integral points. Here geometric means conditions that can be checked on the algebraic closure of the field of definition.

Pairs, sometimes called log pairs, are objects of the form $(X,D)$ where $X$ is a projective variety and $D$ is a reduced divisor. These objects naturally arise in arithmetic when studying integral points, and play a central role in geometry, especially in the minimal model program and the study of moduli spaces of higher dimensional algebraic varieties. They arise naturally in the study of integral points since, if one wants to study integral points on a quasi-projective variety $V$, this can be achieved by studying points on $(X, D)$, where $(X \setminus D) \cong V$, the variety $X$ is a smooth projective compactification of $V$, and the complement $D$ is a normal crossings divisor. In this case, $(X,D)$ is referred to as a \emph{log pair}.

The goal of these notes is threefold:
\begin{enumerate}
    \item to present an introduction to the study of rational and integral points on curves and higher dimensional varieties and pairs;
    \item to introduce various notions of hyperbolicity for varieties and pairs, and discuss their conjectural relations; 
    \item and to show how geometry influences the arithmetic of algebraic varieties and pairs using tools from birational geometry.
\end{enumerate}

Roughly speaking, a $k$-rational point of an algebraic variety is a point whose coordinates belong to $k$. One of the celebrated results in Diophantine geometry of curves is the following. 
\begin{theorem}
If $\calC$ is a geometrically integral smooth projective curve over a number field $k$, then the following are equivalent.
\begin{enumerate}
    \item $g(\calC) \geq 2$, 
    \item the set of $L$-rational points is finite for every finite extension $L/k$ [Faltings' theorem; arithmetic hyperbolicity], 
    \item every holomorphic map $\bC \to \calC^{\mathrm{an}}_\bC$ is constant [Brody hyperbolicity], and
    \item the canonical bundle $\omega_\calC$ is ample. 
\end{enumerate}
\end{theorem}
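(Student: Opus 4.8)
The plan is to prove the equivalences by establishing a cycle, treating $(1) \Leftrightarrow (4)$ as the elementary geometric input, then $(1) \Rightarrow (2)$ as the deep arithmetic statement, $(1) \Rightarrow (3)$ as the complex-analytic statement, and finally closing the loop by showing $(2) \Rightarrow (1)$ and $(3) \Rightarrow (1)$, which are the easy ``converse'' directions exhibiting rational points or nonconstant maps when the genus is low.

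\smallskip

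First I would dispatch $(1) \Leftrightarrow (4)$: for a smooth projective curve, $\deg \omega_\calC = 2g(\calC) - 2$, and on a curve a line bundle is ample if and only if it has positive degree; hence $\omega_\calC$ is ample precisely when $g(\calC) \geq 2$. Next, for $(1) \Rightarrow (3)$, I would invoke the uniformization theorem: a curve of genus $\geq 2$ has universal cover (of its analytification over $\bC$) biholomorphic to the unit disc $\Delta$, so any holomorphic map $\bC \to \calC_\bC^{\an}$ lifts to a holomorphic map $\bC \to \Delta$, which is constant by Liouville's theorem (or the little Picard theorem); therefore the original map is constant. For the converses: if $g(\calC) = 0$, then over $\kbar$ (or after a finite extension realizing a rational point, using that a genus-zero curve with a rational point is $\PP^1$) one gets $\calC_\bC^{\an} \cong \PP^1(\bC)$, which admits nonconstant holomorphic maps from $\bC$, so $(3)$ fails; and $\PP^1$ over a number field has infinitely many rational points, so $(2)$ fails. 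If $g(\calC) = 1$, then after a finite extension $L/k$ the curve acquires an $L$-point and becomes an elliptic curve $E/L$; by the Mordell--Weil theorem $E(L)$ is a finitely generated abelian group, but one can always arrange (after possibly enlarging $L$) that it has positive rank, or more simply observe $E(\bC) \cong \bC/\Lambda$ receives the nonconstant holomorphic covering map $\bC \to \bC/\Lambda$, so $(3)$ fails; for $(2)$, a further finite base change increases the rank, so $L$-rational points are infinite for some $L$. Thus $\lnot(1) \Rightarrow \lnot(2)$ and $\lnot(1) \Rightarrow \lnot(3)$.

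\smallskip

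The main obstacle is of course $(1) \Rightarrow (2)$, which is precisely Faltings' theorem (the Mordell conjecture): a smooth projective curve of genus $\geq 2$ over a number field $k$ has only finitely many $k$-rational points, and applying this to the curve $\calC_L$ over each finite extension $L/k$ (whose genus is unchanged by base change since $L/k$ is separable) gives finiteness of $\calC(L)$ for all such $L$. I would not reproduce Faltings' proof; I would cite it, perhaps remarking that alternative proofs (Vojta's via Arakelov theory, Bombieri's elementary reworking, Lawrence--Venkatesh) exist. One technical point worth noting is the genus-one converse: to make ``$L$-rational points infinite for some finite $L/k$'' precise one uses that for an elliptic curve over a number field there is always a finite extension over which the Mordell--Weil rank is positive — e.g. base-change to a field where a chosen non-torsion point of $E(\bar k)$ is rational — so this direction, while standard, is not entirely formal. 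With these pieces assembled, the four conditions are equivalent.
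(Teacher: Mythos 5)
Your proof is correct and follows essentially the same route the paper takes: $(1)\Leftrightarrow(4)$ via $\deg\omega_\calC = 2g-2$ and the fact that ampleness on a curve is positivity of degree, $(1)\Rightarrow(3)$ via uniformization and Liouville, $(1)\Rightarrow(2)$ by citing Faltings, and the converse directions by exhibiting nonconstant entire maps to $\PP^1(\bC)$ or $\bC/\Lambda$ and infinitely many rational points on $\PP^1$ or a positive-rank elliptic curve after a finite extension (the paper records exactly these ingredients in Proposition~\ref{prop:lowgenus}, Theorem~\ref{thm:faltings}, and the surrounding discussion and table in Section~\ref{sec:rat_curves}). Your remark that the genus-one case of $\lnot(1)\Rightarrow\lnot(2)$ requires producing a non-torsion point of $E(\bar k)$ and base-changing to its field of definition is a genuine technical point; the existence of such a point is a classical fact (e.g.\ Frey--Jarden) and the paper glosses over it just as you flag, so there is no gap.
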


In particular, one can view the above theorem as saying that various notions of hyperbolicity coincide for projective curves. One of the major open questions in this area is how the above generalizes to higher dimensions. The following conjecture we state is related to the Green-Griffiths-Lang conjecture. 

\begin{conjecture}\label{conj:intro}
Let $X$ be a projective geometrically integral variety over a number field $k$. Then, the following are equivalent:
\begin{enumerate}
    \item $X$ is arithmetically hyperbolic,
    \item $X_\bC$ is Brody hyperbolic, and 
    \item every integral subvariety of $X$ is of general type.
\end{enumerate}
\end{conjecture}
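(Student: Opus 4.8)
The statement is a conjecture, not a theorem, so rather than a proof I can only describe the strategy by which one expects each of its six implications to follow from (variants of) the Lang--Vojta program, and point to where the genuine obstructions sit. The plan is to split the cycle of equivalences into two groups: the ``forward'' implications $(3)\Rightarrow(1)$ and $(3)\Rightarrow(2)$, which should be arithmetic/analytic consequences of the general-type hypothesis, and the converses $(1)\Rightarrow(3)$, $(2)\Rightarrow(3)$, for which one argues by contraposition; the remaining equivalence $(1)\Leftrightarrow(2)$ is then formal, though it is itself deep.

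For $(3)\Rightarrow(1)$ the idea is Noetherian induction on closed subvarieties. Granting Lang's conjecture — that a projective variety of general type over a number field has a proper closed subvariety containing all but finitely many rational points over each finite extension — one peels off this exceptional locus and runs induction on dimension; since by hypothesis \emph{every} integral subvariety of $X$ is again of general type, the induction closes and yields that $X(L)$ is finite for all $L/k$, i.e. arithmetic hyperbolicity. The implication $(3)\Rightarrow(2)$ is identical with the Green--Griffiths--Lang conjecture replacing Lang's conjecture: the image of a non-constant holomorphic map $\bC\to X_{\bC}^{\mathrm{an}}$ would land in the exceptional locus of its Zariski closure $Z$, and induction on $\dim Z$ produces a contradiction. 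So these two directions are essentially restatements of the central conjectures of the subject, and I would not expect to do better than reduce to them cleanly.

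For the converses I would argue contrapositively: if some positive-dimensional integral $Z\subseteq X$ is \emph{not} of general type, produce either a Zariski-dense set of rational points over a finite extension (contradicting $(1)$) or a non-constant entire curve (contradicting $(2)$). The structural input is that a smooth model of such a $Z$ is, by the minimal model program together with abundance, either uniruled, or carries an Iitaka fibration whose general fibre has Kodaira dimension zero and hence (Beauville--Bogomolov-type structure) is, up to étale cover, built from abelian, Calabi--Yau and hyperkähler pieces. One then invokes potential density fibrewise — trivial for rational curves, a theorem for abelian varieties, and known or conjectural for the rationally connected and Calabi--Yau cases — and spreads density out over the lower-dimensional base, which is handled by induction; on the analytic side one builds entire curves into the special fibres and again inducts on the dimension of the base. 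The hard part here is genuine: this uses abundance and the full conjectural classification of non-general-type varieties, as well as potential density statements that are themselves open in general.

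Finally $(1)\Leftrightarrow(2)$ follows formally once $(1)\Leftrightarrow(3)$ and $(2)\Leftrightarrow(3)$ are in hand, but a direct proof that arithmetic and Brody hyperbolicity coincide is one of the deepest expected outputs of the Lang--Vojta philosophy and is unknown beyond curves. The main obstacle throughout is therefore not the bookkeeping of the inductions but the two black boxes they rest on — the Lang and Green--Griffiths--Lang conjectures for varieties of general type, and abundance plus potential density for varieties that are not — so I would expect honest progress only in restricted settings: surfaces, subvarieties of abelian or semiabelian varieties (where Lang's conjecture is a theorem of Faltings and Vojta), and log pairs $(X,D)$ of log general type, where the relevant special cases of the program are available.
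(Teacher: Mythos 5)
You correctly identify that Conjecture \ref{conj:intro} is stated in the paper as an open conjecture, not a theorem, and the paper offers no proof; indeed the authors note immediately afterward that it is ``essentially wide open.'' Your survey of the expected implication scheme — reduction of $(3)\Rightarrow(1)$ and $(3)\Rightarrow(2)$ to the Lang and Green--Griffiths--Lang conjectures via Noetherian induction, and contrapositive arguments for the converses resting on MMP/abundance and potential density — is a reasonable account of the conjectural landscape, consistent with the paper's framing, and does not overclaim. Nothing to compare, since there is no proof in the paper to match against.
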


We recall that a variety is of general type if there exists a desingularization with big canonical bundle. This conjecture is very much related to conjectures of Bombieri, Lang, and Vojta postulating that varieties of general type (resp. log general type) do not have a dense set of rational (resp. integral) points. While Conjecture \ref{conj:intro} is essentially wide open, it is known that if the cotangent bundle $\Omega^1_X$ is sufficiently positive, all three conditions are satisfied. In particular, the latter two are satisfied if $\Omega^1_X$ is ample, and the first is satisfied if in addition $\Omega^1_X$ is globally generated. We do note, however, that there are examples of varieties that are (e.g. Brody) hyperbolic but for which $\Omega^1_X$ is not ample (see Example \ref{ex:nonample}). 

In any case, it is natural to ask what can be said about hyperbolicity for quasi-projective varieties. One can rephrase Conjecture \ref{conj:intro} for quasi-projective varieties $V$, and replace (3) with the condition that all subvarieties are of \emph{log general type}. We recall that a variety $V$ is of log general type if there exists a desingularization $\widetilde{V}$, and a projective embedding $\widetilde{V} \subset Y$ with $Y \setminus \widetilde{V}$ a divisor of normal crossings, such that $\omega_Y(D)$ is big.  It is then natural to ask if positivity of the \emph{log cotangent bundle} implies hyperbolicity in this setting. 

The first obstacle, is that the log cotangent sheaf is \emph{never ample}. However, one can essentially ask that this sheaf is ``as ample as possible'' (see Definition \ref{def:almostample} for the precise definition of almost ample). It turns out that, with this definition, quasi-projective varieties with almost ample log cotangent bundle are Brody hyperbolic (see \cite[Section 3]{Demailly1997}). In recent joint work with Kristin DeVleming \cite{advt}, we explore, among other things, the consequences for hyperbolicity that follow from such a positivity assumption. We prove that quasi-projective varieties with positive log cotangent bundle are arithmetically hyperbolic (see Theorem \ref{thm:main}), and that all their subvarieties are of log general type (see Theorem \ref{thm:almostample}).

\begin{theorem}\cite{advt} Let $(X,D)$ be a log smooth pair with almost ample $\Omega^1_X(\log D)$. If $Y \subset X$ is a closed subvariety, then 
\begin{enumerate} \item 
all pairs $(Y,E)$, where $E = (Y \cap D)_{red}$, with $Y \not\subset D$ are of log general type.
\item If in addition $\Omega^1_X(\log D)$ is globally generated, and $V \cong (X \setminus D)$ is a smooth quasi-projective variety over a number field $k$, then for any finite set of places $S$, the set of $S$-integral points $V(\calO_{k,S})$ is finite. \end{enumerate}
\end{theorem}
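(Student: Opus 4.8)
The plan is to prove the two statements separately, with part (1) feeding into the proof of part (2) via the Lang–Vojta philosophy.

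For part (1), I would argue by induction on $\dim Y$. The base case of curves is essentially the theorem quoted in the introduction. For the inductive step, let $Y \subset X$ be a closed subvariety with $Y \not\subset D$, set $E = (Y \cap D)_{\mathrm{red}}$, and pass to a log resolution $\pi\colon (\widetilde{Y}, \widetilde{E}) \to (Y, E)$. The key point is that almost ampleness of $\Omega^1_X(\log D)$ restricts well: there is a natural sheaf map $\pi^*(\Omega^1_X(\log D)|_Y) \to \Omega^1_{\widetilde{Y}}(\log \widetilde{E})$ (here one uses log smoothness of $(X,D)$ and that $Y$ meets $D$ properly, so the log structure restricts), and I would check that the ``almost ample'' property — ampleness on the complement of a proper closed subset, together with the relevant base-locus bound — is inherited by the quotient after pulling back. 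From ampleness of a quotient of $\Omega^1_{\widetilde{Y}}(\log \widetilde{E})$ on a dense open set, one deduces that $\omega_{\widetilde{Y}}(\widetilde{E}) = \det \Omega^1_{\widetilde{Y}}(\log \widetilde{E})$ is big by a standard positivity argument (e.g. a symmetric-power / Bogomolov-type computation, or just that a sheaf with an ample quotient on an open set has big determinant). Subvarieties contained in the ``bad'' locus are handled by the inductive hypothesis after intersecting. This shows $(Y,E)$ is of log general type.

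For part (2), I would invoke the geometric input of (1) together with the analytic fact cited from \cite[Section 3]{Demailly1997}: almost ampleness of $\Omega^1_X(\log D)$ makes $V_\bC = (X \setminus D)_\bC$ Brody hyperbolic, and in fact (using global generation) one gets hyperbolic embedding of $V$ into $X$. Then, following the Lang–Vojta circle of ideas, global generation of $\Omega^1_X(\log D)$ lets one construct enough log differential forms / a log-symmetric-differential argument to run a finiteness statement for integral points. Concretely I would aim to deduce finiteness from a known case of the Lang–Vojta conjecture available under this positivity hypothesis — namely that when $\Omega^1_X(\log D)$ is globally generated and almost ample, any $S$-integral point of $V$ over $k$ is, after spreading out, forced by the differential forms to have bounded height, so $V(\calO_{k,S})$ is finite. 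The main obstacle is exactly this step: unconditional finiteness of integral points is not known in general, so one must extract it from the very strong hypothesis via the globally generated log differentials — this is where the real content of \cite{advt} lies, and I would expect it to rely on a combination of the geometric input from (1) (every subvariety is of log general type, ruling out ``vertical'' accumulation) and a positivity/height argument using the sections of $\Omega^1_X(\log D)$.

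The hard part will be making the height argument in (2) unconditional: I expect one cannot simply cite Faltings/Vojta but must use almost ampleness to obtain a finite morphism to a variety where integral points are already known to be finite (e.g. via semiabelian varieties or via the tautological section on a projectivized log cotangent bundle), and control the exceptional locus using part (1).
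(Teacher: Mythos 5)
Your sketch of part~(1) starts in the right place (log resolution $\pi\colon(\widetilde Y,\widetilde E)\to(Y,E)$ and the pullback map $\pi^*\bigl(\Omega^1_X(\log D)|_Y\bigr)\to\Omega^1_{\widetilde Y}(\log\widetilde E)$), but the final step is where the real content lies, and it is not a ``standard positivity argument.'' The image of the pullback map is a \emph{subsheaf} of $\Omega^1_{\widetilde Y}(\log\widetilde E)$ that is a quotient of a big sheaf, hence big; you have the quotient/subsheaf roles reversed when you write ``ampleness of a quotient of $\Omega^1_{\widetilde Y}(\log\widetilde E)$.'' More seriously, a vector bundle admitting a big subsheaf does \emph{not} in general have big determinant (e.g.\ $\calO(1)\oplus\calO(-2)$ on $\PP^1$ contains $\calO(1)$ but its determinant is $\calO(-1)$), so one cannot conclude $\det\Omega^1_{\widetilde Y}(\log\widetilde E)=K_{\widetilde Y}+\widetilde E$ is big by a formal argument. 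The paper invokes a genuinely deep theorem of Campana--P\u{a}un (see also Schnell) which is specific to log cotangent sheaves: a big subsheaf of $\Omega^1_{\widetilde Y}(\log\widetilde E)$ forces $K_{\widetilde Y}+\widetilde E$ to be big. Your proposal does not identify this ingredient, and without it the proof of (1) does not close. (The paper's proof is also direct, not an induction on dimension; induction isn't needed once Campana--P\u{a}un is available, though for $\dim X=2$ one can avoid it since big equals ample on curves.)

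For part~(2) the proposal is too vague to count as a proof. You gesture at semi-abelian varieties at the very end, which is indeed the right direction, but the concrete mechanism in the paper is the quasi-Albanese map: global generation of $\Omega^1_X(\log D)$ makes $\alpha^*\Omega^1_{\calA_V}\to\Omega^1_V$ surjective, so by a result of Fujino the positive-dimensional components of $\overline{V(\calO_S)}$ are forced (via Vojta's theorem for integral points on subvarieties of semi-abelian varieties) to be semi-abelian; then part~(1) rules out semi-abelian subvarieties of $V$, so all components are points. Alternatively (Proof~2 in the paper), almost ampleness plus global generation gives $q(V)\geq 2\dim V$, so the quasi-Albanese map is never surjective on any positive-dimensional component, and Vojta's theorem again gives a contradiction. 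Your proposed ``bounded height via log symmetric differentials'' argument is not what is done and is not elaborated enough to assess, but the unconditional input you are missing is precisely Vojta's theorem on semi-abelian varieties, plugged in through the quasi-Albanese construction.
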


The main focus of these notes is to present, in a self-contained manner, the proofs of these statements. In particular, we review the notions of ampleness, almost ampleness, and global generation for vector bundles (see Section \ref{sec:hyplog}). The proof of the second statement heavily relies upon the theory of semi-abelian varieties and the quasi-Albanese variety, and so we develop the necessary machinery (see Section \ref{sec:semiabelian}).

Along the way, we discuss the related conjecture of Lang (see Conjecture \ref{conj:lang}), which predicts that varieties of general type \emph{do not} have a dense set of rational points. We discuss the (few) known cases in Section \ref{sec:knowncases}. A related, more general conjecture due to Vojta (see Conjecture \ref{conj:Vojta}) suggests that one can control the heights of points on varieties of general (resp. log general) type. We discuss this conjecture, and we introduce the theory of heights in Section \ref{sec:heights}. 

 We are then naturally led to discuss what happens in the \emph{function field} case (see Section \ref{sec:ff}). In this setting, the analogue of Faltings' Theorem is known (see Theorem \ref{thm:geo_mordell}). Similarly, positivity assumptions on the cotangent bundle lead to hyperbolicity results. In this context, we discuss a theorem of Noguchi (see Theorem \ref{th:noguchi}) and give insight into what is expected quasi-projective setting.

We first got interested in studying positivity of the log cotangent bundle to understand ``uniformity'' of integral points as it relates to the Lang-Vojta conjecture. Consequently, we end these notes with a short section discussing and summarizing some key results in this area. 

\subsection{Outline} The road map of these notes is the following:

\begin{itemize}
    \item[\S \ref{sec:rat_curves}] Rational points with a focus on projective curves.
    \item[\S \ref{sec:integral}] Integral points with a focus on quasi-projective curves. 
    \item[\S \ref{sec:canonical}] Tools from positivity and birational geometry.
    \item[\S \ref{sec:Lang}] Lang's conjecture and some known cases.
    \item[\S \ref{sec:hypproj}] Hyperbolicity of projective varieties and positivity of vector bundles. 
    \item[\S \ref{sec:hyplog}] Hyperbolicity for quasi-projective varieties.
    \item[\S \ref{sec:semiabelian}] Semi-abelian varieties, the quasi-Albanese, and arithmetic hyperbolicity of quasi-projective varieties. 
    \item[\S \ref{sec:vojta}] Vojta's conjecture and the theory of heights. 
    \item[\S \ref{sec:ff}] Diophantine geometry over function fields. 
    \item[\S \ref{sec:consequences}] Some known consequences of Lang's conjecture.
\end{itemize}

%Here \emph{geometric} means at the level of the algebraic closure of the field of definition. These geometric conditions will detect only properties of the Diophantine sets that are invariant under finite extensions of the field of definition (or of the ring of integers). For example, the condition that a curve has a rational point is not invariant under extension: the variety defined by $X^2 + Y^2 = -Z^2$ in $\PP^2$ does not have any $\Q$-point but has an infinite (dense) set of $\Q(i)$-points. On the other hand the property of having infinitely many rational points on some finite extension of the field of definition can sometime be characterize in terms of the geometry of the underlying algebraic varieties.
\subsection{Acknowledgments} These notes grew out of our minicourse given at the workshop ``Geometry and arithmetic of orbifolds'' which took place December 11-13, 2018 at the Universit\'e du Qu\'ebec \`a Montr\'eal.  We are grateful to the organizers: Steven Lu, Marc-Hubert Nicole, and Erwan Rousseau for the opportunity to give these lectures, and we thank the audience who were wonderful listeners and provided many useful comments which are reflected in these notes. We are grateful to Marc-Hubert Nicole for organizing this collection. We also thank Erwan Rousseau for pointing us to the paper \cite{cp}. We thank Damian Brotbek, Pietro Corvaja, Carlo Gasbarri, Ariyan Javanpeykar, and De-Qi Zhang for helpful discussions and comments; we are also grateful to the anonymous referees who greatly helped improving the first version of this notes. Finally, we thank our coauthor Kristin DeVleming whose work is represented here, and for many comments and conversations related to this work, our lectures, and the writing of these notes. Research of K.A. was supported in part by an NSF Postdoctoral Fellowship. Research of A.T. was supported in part by funds from NSF grant DMS-1553459 and in part by Centro di Ricerca Matematica Ennio de Giorgi. Part of these notes were written while K.A. was in residence at the Mathematical Sciences Research Institute in Berkeley, California, during the Spring 2019, supported by the National Science Foundation under Grant No. 1440140.

\subsection{Notation} We take this opportunity to set some notation.
\subsubsection{Geometry}
Divisors will refer to Cartier divisors, and $\Pic(X)$ will denote the Picard group, i.e. the group of isomorphism classes of line bundles on $X$. We recall that a reduced divisor is of \emph{normal crossings} if each point \'etale locally looks like the intersection of coordinate hyperplanes. 

\subsubsection{Arithmetic}
Throughout $k$ will denote a number field, i.e. a finite extension of $\Q$. We will denote by $M_k$ the set of places of $k$, i.e. the set of equivalence classes of absolute values of $k$. 
We will denote by $\calO_k = \{ \alpha \in k : \lvert \alpha \rvert_v \leq 1 \text { for every } v \in M_k \}$ the ring of integers of $k$, with $\calO_k^*$ the group of units, and with $\calO_{k,S}$ the ring of $S$-integers in $k$, i.e. $\calO_{k,S} = \{ \alpha \in k : \lvert \alpha \rvert_v \leq 1 \text{ for every } v \notin S \}$. We will denote an algebraic closure of $k$ by $\kbar$.

\section{Rational points on projective curves}\label{sec:rat_curves}
Some of the main objects of study in Diophantine Geometry are \emph{integral} and \emph{rational points} on varieties. An \emph{algebraic variety} is the set of common solutions to a system of polynomial equations with coefficients in $R$, where $R$ is usually a field or a ring. In these notes we will consider fields that are finite extensions of $\Q$, and rings that are finite extensions of $\Z$.

If $X$ is a variety defined over a number field $k$, i.e. defined by equations with coefficients in $k$, then the set of $k$-\emph{rational} points of $X$ is the set of solutions with coordinates in $k$. In a similar way, one can consider the set of \emph{integral} points of $X$ as the set of solutions with coordinates that belong to the ring of integers of $k$. However it is sometimes more convenient to take an approach that does not depend on the particular choice of coordinates used to present $X$. 

\begin{definition}[Rational points]\label{def:rat_points} Let $X$ be a projective variety defined over $k$. If $P \in X(\kbar)$ is an algebraic point, then the residue field $k(P)$ is a finite extension of $k$. We say that $P$ is \emph{$k$-rational} if $k(P) = k$. \end{definition}

\begin{remark} The above notion is intrinsic in the sense that it depends only on the function field of $X$, which is independent of the embedding in projective space. In this case a rational point corresponds to a morphism $\Spec k \to X$ (Exercise).\end{remark}

For a non-singular curve $\calC$ defined over a number field $k$, the genus governs the distribution of the $k$-rational points: if a curve is rational, i.e. it has genus zero, then the set of $k-$rational points $\calC(k)$ is dense, at most after a quadratic extension of $k$. Similarly, if the curve has genus one, then at most after a finite extension of $k$, the set of $k$-rational points $\calC(k)$ is dense (see \cite{Pra} for a gentle introduction and proofs of these statements). In the genus one case one can prove a stronger statement, originally proven by Mordell, and extended by Weil to arbitrary abelian varieties, namely that the set of $k$-rational points forms a finitely generated abelian group. We can summarize this in the following proposition.

\begin{prop}\label{prop:lowgenus} Let $\calC$ be a non-singular projective curve of genus $g(\calC)$ defined over a number field $k$. 
\begin{itemize}
\item If $g(\calC) = 0$, then $\calC(k)$ is dense, after at most a quadratic extension of $k$. 
\item If $g(\calC) = 1$, then $\calC(k)$ is a finitely generated group of positive rank, (possibly) after a finite extension of $k$. 
\end{itemize}
\end{prop}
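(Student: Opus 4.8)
The plan is to treat the two cases separately, since they rely on quite different mechanisms.

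\textbf{The genus zero case.} Here $\calC$ is a smooth projective curve of genus $0$, hence anticanonically a conic in $\PP^2$, i.e.\ a Severi--Brauer variety of dimension one. The first step is the standard observation that $\calC \cong \PP^1_k$ if and only if $\calC$ has a $k$-rational point: one direction is trivial, and for the converse one projects from a rational point $P \in \calC(k)$, using that $-K_\calC$ has degree $2$ so $\OO_\calC(P)$ has a two-dimensional space of sections giving a degree-one map $\calC \to \PP^1_k$, which is then an isomorphism. The second step is to exhibit a point over a quadratic extension. Embed $\calC$ as a plane conic $Q(x,y,z) = 0$; intersecting with a general $k$-rational line $\ell \cong \PP^1_k$ gives a degree-two subscheme of $\calC$, cut out on $\ell$ by a binary quadratic form over $k$, whose two points are defined over (at worst) a quadratic extension $L/k$. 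Then $\calC_L \cong \PP^1_L$ by the first step, and $\PP^1_L(L)$ is visibly Zariski dense. The only mild subtlety is ensuring the chosen line actually meets $\calC$ (rather than being disjoint, which over a non-algebraically closed field is a priori conceivable for a non-split conic) --- but a conic in $\PP^2$ meets \emph{every} line in the plane scheme-theoretically in a degree-two divisor, so this is automatic.

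\textbf{The genus one case.} Now $\calC$ is a smooth projective genus-one curve, i.e.\ a torsor under its Jacobian $E = \Jac(\calC)$, an elliptic curve over $k$. The first step is again to reduce to the case of a rational point: after a finite extension $L/k$ (e.g.\ the residue field of any closed point of $\calC$, or the field generated by the coordinates of one $\kbar$-point) we have $\calC(L) \neq \emptyset$, and choosing such a point as origin identifies $\calC_L \cong E_L$ as an elliptic curve over $L$. The second step is the Mordell--Weil theorem, which may be quoted from the surrounding text (it is attributed to Mordell and Weil in the paragraph preceding the proposition): $E(L)$ is a finitely generated abelian group. The third step is to arrange \emph{positive rank}, which is where the phrase ``possibly after a finite extension'' does real work: if $E(L)$ is finite we pass to a further extension. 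Concretely, pick any non-torsion point $P \in E(\kbar)$ --- such a point exists because $E(\kbar)$ is infinite (indeed divisible and nontrivial) while the torsion subgroup $E(\kbar)_{\tors}$, though infinite, is a proper subgroup, so $E(\kbar) \setminus E(\kbar)_{\tors} \neq \emptyset$ --- and let $L'$ be the field generated over $L$ by the coordinates of $P$; then $P \in E(L')$ is a point of infinite order, so $\rk E(L') \geq 1$. Enlarging $L'$ if necessary we may also assume $L'/k$ is finite (it already is), and then $\calC_{L'} \cong E_{L'}$ has a finitely generated group of rational points of positive rank.

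\textbf{Main obstacle.} The genuinely substantive input --- finite generation of $E(L)$ --- is the Mordell--Weil theorem, which the proposition is explicitly permitted to invoke (it is recalled in the text immediately above). So the ``hard part'' is really only bookkeeping: being careful about the logical structure of the two ``after a finite extension'' clauses, and in particular justifying the existence of a non-torsion algebraic point to force positive rank. One clean way to package the last point, avoiding any appeal to the structure of $E(\kbar)$, is instead to produce a non-torsion point over a \emph{specific} finite extension: choose a prime $\ell$ of good reduction, then by standard specialization/injectivity of reduction on prime-to-$\ell$ torsion one sees that $E$ cannot have all its points torsion over every finite extension; alternatively, quote that an elliptic curve over a number field acquires positive rank over some finite extension (which follows, e.g., from the fact that it has infinitely many points over $\kbar$ together with Mordell--Weil applied to a field of definition of enough of them). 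Either route completes the argument; I would present the first, more elementary one.
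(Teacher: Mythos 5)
The paper itself offers no proof of Proposition~\ref{prop:lowgenus}; it simply points to \cite{Pra} and to Mordell--Weil. So there is no ``paper's argument'' to compare against, and your proposal should be judged on its own terms. Your genus-zero argument is correct and complete: a conic meets any $k$-rational line in a degree-two divisor, a point of which is defined over a quadratic extension $L$, and a rational point trivializes a genus-zero curve to $\PP^1_L$.

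The genus-one case has a genuine gap exactly where you flagged it, namely in producing a non-torsion point, and none of the three justifications you offer actually closes it. (i) The assertion that $E(\kbar)_{\tors}$ is a \emph{proper} subgroup of $E(\kbar)$ is not an abstract-group-theory fact: the group $(\Q/\Z)^2$ is infinite, divisible, nontrivial, and entirely torsion, and indeed over $\Fbar_p$ every point of an elliptic curve \emph{is} torsion, so the claim genuinely uses the arithmetic of number fields and cannot be waved through. (ii) Injectivity of reduction on prime-to-$\ell$ torsion bounds $\#E(L)_{\tors}$ in terms of $\#E(\F_\fq)$ for a prime $\fq$ of $L$ of good reduction, but that bound grows as $L$ grows, so it does not rule out $E(\kbar)$ being all torsion. (iii) ``Infinitely many $\kbar$-points plus Mordell--Weil over fields of definition'' likewise does not suffice: $E(L_n)$ could a priori be a finite torsion group of size $\geq n$ unless you also know torsion is uniformly bounded over all $L_n$, which is Merel's theorem --- much deeper than anything you want to invoke here. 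The standard clean fix is via heights, a tool already implicit in the Mordell--Weil step: the canonical height $\hat h$ on $E(\kbar)$ vanishes exactly on torsion and differs from a naive Weil height $h_x$ by a constant depending only on $E$; since $h_x$ is unbounded on $E(\kbar)$ (take points with $x$-coordinate $=n$ and $y=\sqrt{n^3+an+b}$, defined over at worst a quadratic extension), one gets points with $\hat h(P)>0$, hence non-torsion, over explicit small extensions. Swapping that in for your step three makes the argument complete.
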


In 1922, Mordell conjectured that a projective curve $\calC$ of genus $g(\calC) > 1$ has finitely many $k$-points. This was proven by Faltings' \cite{Falt}.

\begin{theorem}[Faltings' Theorem \cite{Falt}, formerly Mordell's conjecture]\label{thm:faltings}Let $\calC$ be a non-singular projective curve $\calC$ defined over a number field $k$. If $g(\calC) > 1$ then the set $\calC(k)$ is finite. \end{theorem}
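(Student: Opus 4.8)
The plan is to follow the approach of Vojta, in the simplified form due to Bombieri, which recasts Mordell's conjecture as a statement in Diophantine approximation on the Jacobian (this also fits the theory of heights developed in \S\ref{sec:vojta}). \textbf{Step 1 (reduction to a finitely generated group).} If $\calC(k) = \emptyset$ there is nothing to prove, so fix $P_0 \in \calC(k)$ and let $j \colon \calC \hookrightarrow J := \Jac(\calC)$, $P \mapsto [P - P_0]$, be the associated Abel--Jacobi morphism; since $g(\calC) \geq 2$ this is a closed immersion. By the Mordell--Weil theorem --- Weil's extension to abelian varieties of the genus-one case of Proposition \ref{prop:lowgenus} --- the group $\Gamma := J(k)$ is finitely generated, of some rank $r$. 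As $\calC(k) \subseteq j^{-1}(\Gamma)$, it suffices to show that $j(\calC) \cap \Gamma$ is finite. I would then equip $\Gamma \otimes_\Z \R \cong \R^r$ with the inner product coming from the canonical (N\'eron--Tate) height $\widehat{h}$ of a symmetric ample divisor (a theta divisor) on $J$, so that $\widehat{h}(x) = \lvert x \rvert^2 + O(1)$ on $\Gamma$; write $\langle \cdot , \cdot \rangle$ for the inner product and $\lvert \cdot \rvert$ for the norm.

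\textbf{Step 2 (Mumford's gap principle).} Next one shows there are constants $c > 1$ and $0 < \varepsilon < 1$ such that, for all but finitely many pairs of distinct $P, Q \in \calC(k)$, either $\widehat{h}(j(Q)) > c\,\widehat{h}(j(P))$ or $\langle j(P), j(Q)\rangle \leq (1-\varepsilon)\,\lvert j(P)\rvert\,\lvert j(Q)\rvert$. Geometrically: outside a narrow cone about any fixed direction the height takes only finitely many small values, while inside a fixed narrow cone, listing the points by increasing height forces the heights to grow at least geometrically. Covering $\R^r$ by finitely many cones of small angular width, it then suffices to bound the number of points of $\calC(k)$ lying in one such cone and of height exceeding a threshold $B_0$ to be chosen.

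\textbf{Step 3 (Vojta's inequality).} The core input is that there exist no $P, Q \in \calC(k)$ with $\widehat{h}(j(P))$ sufficiently large, with $\widehat{h}(j(Q))/\widehat{h}(j(P))$ sufficiently large, and with $j(P), j(Q)$ spanning a sufficiently small angle. One proves this by the auxiliary-section method on the surface $\calC \times \calC$: using Siegel's lemma (in the Bombieri--Vaaler form) construct a nonzero section $s$ of a line bundle of bidegree $(d_1, d_2)$ with $d_2/d_1 \approx \sqrt{\widehat{h}(j(P))/\widehat{h}(j(Q))}$ and with small Arakelov height; then bound the index of $s$ at $(P,Q)$ from below via Dyson's lemma on a product of curves (Esnault--Viehweg), and from above because $(P,Q)$ is ``too rational'' --- the product formula applied to the small section, combined with the heights of $P$ and $Q$, forces $s$ to vanish there to high order. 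A suitable choice of the numerical parameters makes the two index bounds contradictory.

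\textbf{Step 4 (conclusion) and the main difficulty.} Steps 2 and 3 combine immediately: in a fixed narrow cone, consecutive points of height above $B_0$ have both geometrically growing height ratios and small mutual angles, which Step 3 rules out; hence each cone, and so all of $\calC(k)$ together with the finitely many points of height $< B_0$ (Northcott's theorem), is finite. The same argument runs verbatim over any finite extension $L/k$ --- the genus being a geometric invariant --- which upgrades the conclusion to the ``arithmetic hyperbolicity'' statement of Theorem~0.1. I expect Step 3 to be the real obstacle: it requires a quantitatively sharp Siegel's lemma, control of Arakelov intersection numbers on $\calC \times \calC$ (an effective arithmetic Riemann--Roch, including the archimedean Green's-function contributions), and the delicate Dyson lemma on a product of two curves; balancing all the parameters so that the lower and upper index estimates actually collide is where the proof is genuinely hard.
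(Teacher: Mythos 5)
The paper does not prove Faltings' Theorem. Immediately after the statement, the authors recount the history of its proofs --- Faltings' original reduction to the Shafarevich conjecture via Parshin's trick and heavy Arakelov-theoretic tools, Vojta's Diophantine-approximation proof, Faltings' simplification removing arithmetic Riemann--Roch, and Bombieri's further simplification combining these with Mumford's gap estimates --- and explicitly say that such a proof is outside the scope of the notes. The result is used as a black box, so there is no in-paper argument to compare your attempt against.

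Your sketch is nonetheless a recognisable and essentially accurate outline of the Vojta--Bombieri route: Abel--Jacobi into $J=\Jac(\calC)$, Mordell--Weil, the N\'eron--Tate inner product on $J(k)\otimes\R$, Mumford's gap principle, Vojta's inequality in a fixed narrow cone, and Northcott to finish. Two remarks. First, the Abel--Jacobi map is already a closed immersion for $g(\calC)\geq 1$; the hypothesis $g\geq 2$ is not needed for the embedding but rather to make $j(\calC)$ fail to be a translate of an abelian subvariety, which is what ultimately drives the finiteness. Second, and more substantively, your closing paragraph conflates the two versions of the key step: Vojta's original argument is the one requiring Gillet--Soul\'e arithmetic Riemann--Roch for arithmetic threefolds, whereas Bombieri's simplification --- the version you are actually running in Step~3, with Siegel's lemma in Bombieri--Vaaler form and Dyson's lemma on $\calC\times\calC$ in the Esnault--Viehweg formulation --- was designed precisely to avoid that machinery in favour of elementary (though delicate) height estimates for the auxiliary section. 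You are right that Step~3 carries essentially all of the weight; as a proposal it is honest about where the difficulty lies, but in the cited sources that step alone is a long and technical argument and cannot simply be waved at.
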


The original proof of Faltings reduced the problem to the Shafarevich conjecture for abelian varieties, via Parhsin's trick. The argument uses very refined and difficult tools like Arakelov Theory on moduli spaces, semistable
abelian schemes and p-divisible groups, and therefore such a proof is outside the scope of these notes. A different proof was given shortly after by Vojta in \cite{Vojta_Mordell} using ideas from Diophantine approximation while still relying on Arakelov theory. Faltings in \cite{Faltings_ab} gave another simplification, eliminating the use of the arithmetic Riemann-Roch Theorem for arithmetic threefolds in Vojta's proof, and was able to extend these methods to prove a conjecture of Lang. Another simplification of both Vojta and Faltings' proofs was given by Bombieri in \cite{Bombieri} combining ideas from Mumford \cite{Mumford} together with the ones in the aforementioned papers.

The above results leave open many other Diophantine questions: when is the set $\calC(k)$ empty? Is there an algorithm that produces a set of generators for $E(k)$, for an elliptic curve $E$ defined over $k$? Is there an algorithm that computes the set $\calC(k)$ when it is finite (Effective Mordell)? We will not address these questions in this notes, but we will mention the very effective Chabauty-Coleman-Kim method that in certain situations can give answer to the latter question (see \cite{Chabauty, Coleman, McC-Poo, Kim, Kim2}).

%%There are few natural questions one can ask:
%\begin{enumerate}
%\item What about higher dimensional varieties?
%\item How are the arithmetic results related to geometric properties of the varieties?
%\item What about non-proper varieties?
%\end{enumerate}
%
%We will address all of these questions -- (2) and (3) serve as the main motivation for these notes. 
%
%\medskip
%
%Before taking in the taks to study the arithemtic and the geometry of higher dimensional variety we want to rephrase the results for curves relating them to the underlying geometry. First, a definition.
%

\subsection{Geometry influences arithmetic}
In order to generalize, at least conjecturally, the distribution of $k$-rational points on curves to higher dimensional varieties, it is convenient to analyze the interplay between the arithmetic and the geometric properties of curves, following the modern philosophy that the geometric invariants of an algebraic variety determine arithmetic properties of the solution set.

We start by recalling the definition of the canonical sheaf.

\begin{definition} Let $X$ be a non-singular variety over $k$ of $\dim X = n$. We define the \emph{canonical sheaf} of $X$ to be $\omega_X = \bigwedge^n \Omega_{X/k}^1$, where $\Omega_{X/k}^1$ denotes the sheaf of relative differentials of $X$. \end{definition}

If $\calC$ is a curve, then $\omega_X = \Omega_{\calC}^1$ is an invertible sheaf whose sections are the global 1-forms on $\calC$. In this case, we call any divisor in the linear equivalence class a \emph{canonical divisor}, and denote the divisor by $K_\calC$. 

\begin{example} Let $\calC \cong \PP^1$, with coordinates $[x:y]$. In the open affine $U_x$ given by $x \neq 0$ we can consider the global coordinate $t = y/x$ and the global differential form $dt$. We can extend $dt$ as a \emph{rational} differential form $s \in \Omega^1_{\PP^1}$, noting that it will possibly have poles. To compute its associated divisor we note that in the locus $U_x \cap U_y$, i.e. where $x \neq 0$ and $y \neq 0$, the section $s$ is invertible. In the intersection, the basic formula $d\left( 1/t\right) = -dt/t^2$, shows that the divisor associated to $s$ is $-2P$, where $P = [0:1]$. In particular, given that any two points are linearly equivalent on $\PP^1$, $K_{\PP^1} \sim -P_1 -P_2$ for two points on $\PP^1$, and $\deg K_{\PP^1}= -2$.
\end{example}

  More generally, any divisor $D$ on a smooth curve $\calC$ is a weighted sum of points, and its degree is the sum of the coefficients. In the case of the canonical divisor, if $\calC$ is a curve of genus $g$, then $K_\calC$ has $\deg(K_\calC) = 2g-2$ (see \cite[Example IV.1.3.3]{hartshorne}). 

  Given Theorem \ref{thm:faltings} and Proposition \ref{prop:lowgenus}, one can see that the positivity of the canonical divisor $K_\calC$, determines the distribution of $k-$rational points $\calC(k)$. In particular, the set $\calC(k)$ is finite if and only if $\deg(K_\calC) > 0$. 

  There is a further geometric property of curves that mimics the characterization of rational points given above. If $\calC$ is a non-singular curve defined over a number field $k$, holomorphic maps to the corresponding Riemann Surface $\calC_\C$ (viewed either as the set of complex points $\calC(\C)$ together with its natural complex structure, or as the analytification of the algebraic variety $\calC$) are governed by the genus $g(\calC)$. If the genus is zero or one, the universal cover of $\calC_C$ is either the Riemann sphere or a torus, and therefore there exist non-constant holomorphic maps $\C \to \calC$ with dense image. On the other hand, if $g(\calC) \geq 2$, the universal cover of $\calC_\C$ is the unit disc and, by Liouville's Theorem, every holomorphic map to $\calC_\C$ has to be constant, since its lift to the universal cover is constant.

  Varieties $X$ where every holomorphic map $\C \to X$ is constant play a fundamental role in complex analysis/geometry so we recall here their definition.

  \begin{definition}\label{def:cx_hyp}
	  Let $X$ be a complex analytic space. We say that $X$ is \emph{Brody Hyperbolic} if every holomorphic map $X \to \C$ is constant. We say that $X$ is \emph{Kobayashi hyperbolic} if the Kobayashi pseudo-distance is a distance (see \cite{kobayashi} for definition and properties).   \end{definition}
	  
\begin{remark} When $X$ is compact the two notions are equivalent by \cite{brody} and we will only say that $X$ is hyperbolic. For more details about the various notions of hyperbolicity and their connection with arithmetic and geometric properties of varieties we refer to the chapter by Javanpeykar in this volume \cite{Javan_book} \end{remark}

  Given a non-singular projective curve $\calC$ defined over a number field, the previous discussion can be summarized in the following table.

\begin{table}[!htbp]
\begin{tabular}{|l|l|l|l|}
\hline
$g(\calC)$   & $\deg(K_\calC)$ & complex hyperbolicity & density of $k$-points \\ \hline
0        & $-2 < 0$    & not hyperbolic  & potentially dense     \\ \hline
1        & 0           & not hyperbolic & potentially dense     \\ \hline
$\geq 2$ & $2g-2 > 0$  & hyperbolic & finite                \\ \hline
\end{tabular}
\end{table}

\begin{remark} In the table, $\deg(K_\calC) >0$ is equivalent to requiring that $\omega_C$ is ample, since for curves a divisor is ample if and only if its degree is positive (see Section \ref{sec:posproper}).\end{remark}

\section{Integral points on curves}\label{sec:integral}
The previous section deals with the problem of describing the set $\calC(k)$, which we can think of the $k$-solutions of the polynomial equations that define $\calC$. An analogous problem, fundamental in Diophantine Geometry, is the study of the integral solutions of these equations, or equivalently of the integral points of $\calC$. However, the definition of integral point is more subtle.

\begin{example}\label{ex:P1_integral}
  Consider $\PP^1_\C$ as the set $\{ [x_0 : x_1] : x_0,x_1 \in \C \}$; if $k\subset \C$ is a number field, then the set of rational points, as defined in Definition \ref{def:rat_points}, is the subset $\PP^1_\C(k) \subset \PP^1_\C$ consisting of the points $[x_0:x_1]$ such that both coordinates are in $k$.
  
  \begin{remark}As points in $\PP^n$ are equivalence classes, we are implicitly assuming the choice of a representative with $k$-rational coordinates. For example the point $[\sqrt{2}:\sqrt{2}]$ is a $k$-rational point, being nothing but the point $[1:1]$.\end{remark}
  
  Let us focus on the case $k = \Q$. We want to identify the \emph{integral} points: it is natural to consider points $[x_0:x_1]$ in which both coordinates are \emph{integral}, i.e. $x_0,x_1 \in \calO_k = \Z$. In this case, by definition of projective space, this is equivalent to considering points of the form $[a:b]$ in which $\gcd(a,b) = 1$.

  Now consider the problem of characterizing integral points among $\Q$-rational points, i.e. given a point $P = [\frac{a}{b}: \frac{c}{d}] \in \PP^1(\Q)$, when is this point integral (assuming we already took care of common factors)? One answer is to require that $b=d=1$; however, the point $P$ can also be written as $P = [ad: bc]$, and since we assumed that we already cleared any common factor in the fractions, we have $\gcd(ad,bc) = 1$. So in particular, \emph{every} rational point is integral!
\end{example}

\begin{example}\label{ex:A1_integral}
  Let us consider the \emph{affine} curve $\Aff^1_\C \subset \PP^1_\C$, as the set $\{[a: 1]: a \in \C\}$. Then $\Aff^1(k) = \{ [a:1] : a \in k \}$. The integral points should correspond to $\{ [a:1] : a \in \calO_k \}$. Now we can ask the same question as in Example \ref{ex:P1_integral}, specializing again to $k = \Q$: namely how can we characterize integral points on $\Aff^1$ among its $\Q$-rational points? Given a rational point $P = [\frac{a}{b}:1]$, we can require that $b = 1$. This is equivalent to asking that for every prime $\fp \in \Z$, the prime $\fp$ does not divide $b$. In the case in which $\fp \mid b$, we can rewrite $P = [a: b]$ and we can see that the reduction modulo $\fp$ of $P$, i.e. the point whose coordinates are the reduction modulo $\fp$ of the coordinates of $P$, is the point $[1:0]$ which does not belong to $\Aff^1$! This shows that one characterization of integral points is the set of $k$-rational points whose reduction modulo every prime $\fp$ is still a point of $\Aff^1$. More precisely, let $D = [0:1] \in \PP^1 \setminus \Aff^1$ be the point at infinity: integral points $\Aff^1(\calO_k)$ are precisely the $k$-rational points $\Aff^1(k)$ such that their reduction modulo every prime of $\calO_k$ is disjoint from $D$.
\end{example}

The previous example gives an intuition for a coordinate-free definition of integral points. Note that, given an affine variety $X \subset \Aff^n$ defined over the ring of integers $\calO_k$ of a number field $k$ one could try to mimic Definition \ref{def:rat_points} as follows: define the ring $\calO_k[X]$ to be the image of the ring $\calO_k[T_1,\dots,T_n]$ inside the coordinate ring $k[X]$ of $X$. Now given a rational point $P = (p_1,\dots,p_n) \in X(k)$, it is integral when all the coordinates are in $\calO_k$. Therefore the point $P$ defines a specialization morphism $\varphi_p: \calO_k[T_1,\dots,T_n] \to \calO_k$ which induces, by passing to the quotient, a morphism $\varphi_p: \calO_k[X] \to \calO_k$. The construction can be reversed to show that indeed every such morphism corresponds to an integral point. Note that this definition depends on the embedding $X \subset \Aff^n$.

We will instead pursue a generalization that is based on Example \ref{ex:A1_integral}. Recall that the characterization we obtained of $\Aff^1(\calO_k)$ made use of the reduction modulo primes of points. To give a more intrinsic and formal definition we introduce the notion of \emph{models}.

\begin{definition}[Models]
  \label{def:models}
  Let $X$ be a quasi-projective variety defined over a number field $k$. A \emph{model} of $X$ over the ring of integers $\calO_k$ is a variety $\calX$ with a dominant, flat, finite type map $\calX \to \Spec \calO_k$ such that the generic fiber $\calX_\eta$ is isomorphic to $X$. See Figure \ref{fig:abramovich} for an illustration taken from \cite{dan}. 
\end{definition}

\begin{center}
\begin{figure} \caption{An illustration of a model (from \cite{dan}).}\label{fig:abramovich}
\setlength{\unitlength}{0.0005in}
\begingroup\makeatletter\ifx\SetFigFont\undefined%
\gdef\SetFigFont#1#2#3#4#5{%
  \reset@font\fontsize{#1}{#2pt}%
  \fontfamily{#3}\fontseries{#4}\fontshape{#5}%
  \selectfont}%
\fi\endgroup%
{\renewcommand{\dashlinestretch}{30}
\begin{picture}(4285,3906)(0,-10)
\path(12,3879)(2862,3879)
\put(3312,2979){\ellipse{212}{212}}
\put(3312,2979){\ellipse{212}{212}}
\put(3387,279){\ellipse{212}{212}}
\put(3387,279){\ellipse{212}{212}}
\drawline(2862,3879)(2862,3879)
\path(12,1179)(2862,1179)
\path(12,279)(2862,279)
\path(3462,3804)(3462,3802)(3460,3797)
	(3458,3787)(3455,3773)(3451,3753)
	(3446,3727)(3439,3695)(3431,3658)
	(3423,3617)(3414,3573)(3405,3526)
	(3395,3478)(3386,3429)(3377,3381)
	(3368,3334)(3360,3288)(3353,3244)
	(3346,3202)(3339,3162)(3334,3124)
	(3329,3088)(3325,3054)(3321,3021)
	(3318,2989)(3316,2958)(3314,2928)
	(3313,2899)(3312,2870)(3312,2841)
	(3312,2811)(3313,2780)(3314,2749)
	(3316,2718)(3318,2686)(3320,2654)
	(3323,2621)(3327,2588)(3330,2555)
	(3334,2521)(3338,2487)(3342,2453)
	(3347,2418)(3351,2384)(3356,2350)
	(3360,2316)(3365,2283)(3369,2250)
	(3374,2217)(3378,2185)(3382,2154)
	(3385,2124)(3388,2094)(3391,2065)
	(3394,2036)(3396,2009)(3398,1981)
	(3400,1954)(3401,1925)(3401,1896)
	(3401,1867)(3400,1837)(3399,1807)
	(3397,1776)(3395,1743)(3392,1709)
	(3388,1673)(3384,1635)(3378,1595)
	(3373,1553)(3367,1510)(3360,1466)
	(3353,1422)(3346,1379)(3339,1338)
	(3333,1300)(3327,1266)(3322,1237)
	(3318,1215)(3315,1198)(3314,1188)
	(3312,1182)(3312,1179)
\path(2637,3804)(2637,3802)(2635,3797)
	(2633,3787)(2630,3773)(2626,3753)
	(2621,3727)(2614,3695)(2606,3658)
	(2598,3617)(2589,3573)(2580,3526)
	(2570,3478)(2561,3429)(2552,3381)
	(2543,3334)(2535,3288)(2528,3244)
	(2521,3202)(2514,3162)(2509,3124)
	(2504,3088)(2500,3054)(2496,3021)
	(2493,2989)(2491,2958)(2489,2928)
	(2488,2899)(2487,2870)(2487,2841)
	(2487,2811)(2488,2780)(2489,2749)
	(2491,2718)(2493,2686)(2495,2654)
	(2498,2621)(2502,2588)(2505,2555)
	(2509,2521)(2513,2487)(2517,2453)
	(2522,2418)(2526,2384)(2531,2350)
	(2535,2316)(2540,2283)(2544,2250)
	(2549,2217)(2553,2185)(2557,2154)
	(2560,2124)(2563,2094)(2566,2065)
	(2569,2036)(2571,2009)(2573,1981)
	(2575,1954)(2576,1925)(2576,1896)
	(2576,1867)(2575,1837)(2574,1807)
	(2572,1776)(2570,1743)(2567,1709)
	(2563,1673)(2559,1635)(2553,1595)
	(2548,1553)(2542,1510)(2535,1466)
	(2528,1422)(2521,1379)(2514,1338)
	(2508,1300)(2502,1266)(2497,1237)
	(2493,1215)(2490,1198)(2489,1188)
	(2487,1182)(2487,1179)
\path(312,3804)(312,3802)(310,3797)
	(308,3787)(305,3773)(301,3753)
	(296,3727)(289,3695)(281,3658)
	(273,3617)(264,3573)(255,3526)
	(245,3478)(236,3429)(227,3381)
	(218,3334)(210,3288)(203,3244)
	(196,3202)(189,3162)(184,3124)
	(179,3088)(175,3054)(171,3021)
	(168,2989)(166,2958)(164,2928)
	(163,2899)(162,2870)(162,2841)
	(162,2811)(163,2780)(164,2749)
	(166,2718)(168,2686)(170,2654)
	(173,2621)(177,2588)(180,2555)
	(184,2521)(188,2487)(192,2453)
	(197,2418)(201,2384)(206,2350)
	(210,2316)(215,2283)(219,2250)
	(224,2217)(228,2185)(232,2154)
	(235,2124)(238,2094)(241,2065)
	(244,2036)(246,2009)(248,1981)
	(250,1954)(251,1925)(251,1896)
	(251,1867)(250,1837)(249,1807)
	(247,1776)(245,1743)(242,1709)
	(238,1673)(234,1635)(228,1595)
	(223,1553)(217,1510)(210,1466)
	(203,1422)(196,1379)(189,1338)
	(183,1300)(177,1266)(172,1237)
	(168,1215)(165,1198)(164,1188)
	(162,1182)(162,1179)
\path(12,2004)(14,2006)(18,2010)
	(26,2017)(38,2028)(55,2044)
	(77,2064)(104,2089)(136,2117)
	(172,2150)(211,2185)(254,2222)
	(298,2260)(343,2298)(388,2336)
	(433,2373)(477,2408)(519,2441)
	(560,2473)(599,2501)(637,2527)
	(673,2551)(707,2571)(739,2590)
	(770,2606)(800,2619)(829,2630)
	(857,2639)(884,2646)(910,2650)
	(936,2653)(962,2654)(989,2653)
	(1017,2650)(1045,2646)(1072,2639)
	(1101,2632)(1129,2622)(1158,2612)
	(1187,2599)(1217,2586)(1248,2572)
	(1278,2557)(1310,2541)(1341,2524)
	(1373,2507)(1405,2490)(1437,2472)
	(1469,2455)(1501,2439)(1533,2422)
	(1565,2407)(1597,2392)(1628,2378)
	(1660,2365)(1691,2354)(1721,2344)
	(1752,2335)(1783,2328)(1813,2322)
	(1844,2319)(1875,2316)(1902,2316)
	(1930,2318)(1958,2320)(1987,2325)
	(2017,2332)(2048,2340)(2081,2350)
	(2114,2362)(2150,2377)(2187,2393)
	(2226,2411)(2266,2432)(2309,2455)
	(2354,2480)(2401,2507)(2450,2535)
	(2500,2566)(2551,2597)(2603,2630)
	(2654,2663)(2705,2696)(2754,2728)
	(2800,2759)(2843,2788)(2882,2814)
	(2916,2837)(2944,2857)(2968,2873)
	(2985,2885)(2998,2894)(3006,2900)
	(3010,2903)(3012,2904)
\dashline{60.000}(3162,3504)(87,3504)
\put(3387,3504){\ellipse{212}{212}}
\put(3762,129){\makebox(0,0)[lb]{{\SetFigFont{12}{14.4}{\rmdefault}{\mddefault}{\updefault}$\Spec k$}}}
\put(0,254){\makebox(0,0)[lb]{{\SetFigFont{12}{14.4}{\rmdefault}{\mddefault}{\updefault}$\Spec \calO_{k,S}$}}}
\put(3612,3504){\makebox(0,0)[lb]{{\SetFigFont{12}{14.4}{\rmdefault}{\mddefault}{\updefault}$D$}}}
\put(1162,3204){\makebox(0,0)[lb]{{\SetFigFont{12}{14.4}{\rmdefault}{\mddefault}{\updefault}$\calD$}}}
\put(3612,2904){\makebox(0,0)[lb]{{\SetFigFont{12}{14.4}{\rmdefault}{\mddefault}{\updefault}$P$}}}
\put(1287,2129){\makebox(0,0)[lb]{{\SetFigFont{12}{14.4}{\rmdefault}{\mddefault}{\updefault}$\sigma_P$}}}
\put(3537,1704){\makebox(0,0)[lb]{{\SetFigFont{12}{14.4}{\rmdefault}{\mddefault}{\updefault}$X$}}}
\put(462,1479){\makebox(0,0)[lb]{{\SetFigFont{12}{14.4}{\rmdefault}{\mddefault}{\updefault}$\calX$}}}
\end{picture}
}
\end{figure}
\end{center}

Over every prime $\fp$ of $\calO_k$ we have a variety $\calX \times_{\calO_k} \Spec k_\fp$ defined over the residue field $k_\fp$ which is the ``reduction modulo $\fp$ of $X$'', while the generic fiber over $(0)$ is isomorphic to the original $X$. This will make precise the notion of reduction modulo $\fp$ of a prime.

Given a rational point $P \in X(\kappa)$, since $X \cong \calX_\eta$, this gives a point in the generic fiber of a model $\calX$ of $X$. If the model $\calX$ is proper, the point $P$, which corresponds to a map $\Spec k \to X$, will extend to a section $\sigma_P: \Spec \calO_k \to \calX$, therefore defining the reduction modulo a prime of $P$: it is just the point $P_\fp = \sigma(\calO_k) \cap \calX_\fp$. Therefore, given a \emph{proper} model of $X$, there is a well defined notion of the reduction modulo a prime of $k$-rational points.

The last ingredient we need to define integral points is the analogue of the point $D = [0:1] \in \PP^1 \setminus \Aff^1$ in Example \ref{ex:A1_integral}. In the example we used that the affine curve $\Aff^1$ came with a (natural) compactification, namely $\PP^1$, and a divisor ``at infinty'', namely $D$. This motivates the idea that to study integral points, the geometric objects that we need to consider are pairs of a variety and a divisor, i.e. objects of the form $(X,D)$ where $X$ is a projective variety (corresponding to the compactification of the affine variety) and $D$ is a divisor (corresponding to the divisor ``at infinity''). 

\begin{definition}\label{def:pair}
  A \emph{pair} is a couple $(X,D)$ where $X$ is a (geometrically integral) projective variety defined over a field $k$ and a normal crossing divisor $D$. A model $(\calX,\calD) \to \Spec \calO_k$ of the pair, is a proper model $\calX \to \Spec \calO_k$ of $X$ together with a model $\calD \to \Spec \calO_k$ of the divisor $D$ such that $\calD$ is a Cartier divisor of $\calX$. 
\end{definition}

\begin{remark}\label{rmk:qproj} Given a non-singular affine variety $Y$ defined over a field of characteristic 0, combining the theorems of Nagata and Hironaka, we can always find a non-singular projective compactification $X \supset Y$ such that $D = X \setminus Y$ is a simple normal crossing divisor. Therefore we can identify (non canonically) every non-singular affine variety $Y$ as the pair $(X,D)$. This gives a way to characterize the set of integral points on $Y$ as the set of rational points of $X$ whose reduction modulo every prime does not specialize to (the reduction of) $D$. Using models, this gives a formal intrinsic defining of integral points. \end{remark}

\begin{definition}\label{def:int_points} 
	Given a pair $(X,D)$, the \emph{set of $D$-integral points} of $X$, or equivalently the \emph{set of integral points} of $Y = X \setminus D$, with respect to a model $(\calX,\calD)$ of $(X,D)$, is the set of sections $\Spec \calO_k \to \calX \setminus \calD$. We will denote this set by $X(\calO_{k,D})$ or $Y(\calO_k)$ for $Y = X \setminus D$.
\end{definition}

\begin{remark}\leavevmode
  \begin{itemize}
    \item In the case in which $Y$ is already projective, i.e. $X = Y$ and $D = \emptyset$, then the set of integral points coincide with the set of \emph{all} sections $\Spec \calO_k \to \calX$. Since the model $\calX \to \Spec \calO_k$ is proper, this is equivalent to the set of sections of the generic fiber, i.e. maps $\Spec k \to X$, which is exactly the set $X(k)$. This shows that for projective varieties, the set of rational points coincide with the set of integral points.
    \item The definition of integral points depends on the choice of the model! Different choices of models might give different sets of integral points (see Example \ref{ex:blowup}).
    \item When we consider an affine variety $V$ given inside an affine space $\Aff^n$ as the vanishing of a polynomial $f$ with coefficients in $\calO_k$, this corresponds to a pair $(X,D)$ where $X$ is the projective closure of $V$ (i.e. the set of solutions to the equations obtained by homogenizing $f$), and $D$ the boundary divisor. In this case there is a natural model of $(X,D)$ given by the model induced by the natural model of $\PP^n$ over $\Spec \calO_k$, i.e. $\PP^n_{\calO_k}$ and the closure of $D$ inside it. Then one can show (exercise) that the set of integral points with respect to this model coincide with solutions of $f = 0$ with integral coordinates (see discussion before Definition \ref{def:models}).
  \end{itemize}
  
  The following example shows that the definition of integral points as above truly depends on the choice of model.
  
	    \begin{example}[Abramovich]\label{ex:blowup}
		    Consider an elliptic curve given as $E : y^2 = x^3 + A x + B$ with $A,B \in \Z$, and as usual the origin will be the point at infinity. Since $E$ is given as the vanishing of a polynomial equation, the homogenization defines a closed subset of $\PP^2_\Q$. Moreover, since the coefficients are all integers we get that the same equation defines a model $\calE$ of $E$ which is the closure of $E$ inside $\PP^2_\Z$, i.e. the standard model of $\PP^2_\Q$ over $\Spec \Z$. Let $P \in E(\Q)$ be a rational point which is not integral with respect to $D = \{ 0_E \}$. This means that there exists a prime $\fp \in \Spec \Z$ such that $P$ reduces to the origin modulo $\fp$. In particular, the section $P: \Spec \Z \to \calE$ intersects the zero section over the prime $\fp$. Call $Q$ the point of intersection.

		    Consider now the blow up $\pi: \calE' \to \calE$ of $\calE$ at $Q$: by definition of the blow up $\calE'$ is also a model of $E$. To see this, observe that the composition of $\pi$ with the model map $\calE \to \Spec \Z$ is still flat and finite type; moreover the point we blow up was in a special fiber so it did not change the generic fiber, which is still isomorphic to $E$. In this new model the lift of the section $P: \Spec \Z \to \calE'$ does not intersect the zero section over the prime $\fp$. We can repeat this process for every point where the section $P(\Spec \Z)$ intersects the zero section, which will result in a different model for which the point $P$ is now an integral point! This shows that the notion of integral points depends on the model chosen.
  \end{example}
\end{remark}

%which can be used to define integral points. In particular in the case in which $X$ is projective we will see that the set of rational points coincides with the set of integral points (morally, every morphism $\Spec k \to X$ extends to a morphism $\Spec \calO_k \to \calX$).
%
%For an affine variety $V$ we can consider its compactification and regard this as the complement of a normal crossing divisor $D$ in a projective variety $X$ (e.g. using the theorems of Nagata and Hironaka). We then can consider a model $\calX \to \Spec \calO_k$ of $X$ over the ring of integers together with a suitable model of $D$, i.e. a model $\calD \to \Spec \calO_k$ that is a divisor inside $\calX$. Now for every rational point $P \in V(k)$ we get as before a section $\varphi_P: \Spec \calO_k \to \calX$. However, there is no guarantee that the section will not intersect $\calD$, i.e. when we look at the reduction modulo a prime $\fp \in \Spec \calO_k$ of the point $P$, it is possible  that $P_\fp \in \calD_\fp$. This rational point will \emph{not} be integral in our previous definition since it will not extend to a map $\Spec \calO_k \to \Spec \calO_X[V]$ (remember that the points of $D$ are outside $V$). In particular, integral points correspond to rational points $P \in V(k)$ such that $\varphi_P(\Spec \calO_k) \cap \calD = \emptyset$.
%
%This motivates the idea that to study integral points, the geometric objects that we need to consider are pairs of a variety and a divisor.

  The following example motivates studying $(S,D)$-integral points, where $S$ is a finite set of places.

  \begin{example}Sometimes it is useful to consider rational points that fail to be integral only for a specific set of primes in $\calO_k$. For example the equation $2x + 2y = 1$ does not have any integral solutions while having infinitely many rational solutions. However, one sees that it has infinitely many solutions in the ring $\Z[\frac{1}{2}]$, which is finitely generated over $\Z$. A solutions in $\Z[\frac{1}{2}]$, e.g. $(\frac{1}{4},\frac{1}{4})$, fails to be integral only with respect to the prime $2$. More precisely consider the model of $\calC: 2x + 2y - z = 0$ in $\PP^2_\Z$ and of the divisor $\calD = [1:-1:0]$: then $P = [1:1:4]$ is a $\Q$-rational point of $\calC$ but it is not integral, since the reduction of $P$ modulo $2$ is the point $P_2 = [1:1:0] = [1:-1:0]$ over $\F_2$. On the other hand, for every prime $\fp \neq 2$, the reduction modulo $\fp$ of $P$ is disjoint from $\calD$. 
  
  Analogously, the rational point $P$ gives rise, since the model is proper, to a morphism $P: \Spec\Z \to \calC$ which is not disjoint from $\calD$, but such that the intersection $P(\Spec\Z) \cap \calD$ is supported over the prime $2$. \end{example}

  This motivates the following definition:

  \begin{definition}
    Let $S$ be a finite set of places of $k$, and let $(\calX,\calD)$ be a model of a pair $(X,D)$ defined over $k$. An $(S,D)$-integral point is an integral point $P: \Spec \calO_k \to \calX$ such that the support of $P^* \calD$ is contained in $S$. We denote the set of $(S,D)$-integral points of $(X,D)$ as $X(\calO_{S,D})$. If $Y = X \setminus D$, then we denote the set of $(S,D)$-integral points of $Y$ as $Y(\calO_{S})$.
  \end{definition}
  
  \begin{remark}
    One can also define the set of $(S,D)$-integral points as sections $\Spec\calO_{k,S} \to \calX$ that do not intersect $\calD$, where $\calO_{k,S}$ is the ring of $S$-integers. 
  \end{remark}

  Now that we have an intrinsic definition for integral points we can concentrate on the problem of describing the set of $(S,D)$-integral points on affine curves. As in the case of rational points on projective curves, the distribution of integral points will be governed by the geometry of the affine curve, i.e. of the corresponding pair. For one dimensional pairs, the fundamental invariant is the Euler characteristic, or equivalently the degree of the log canonical divisor.
  
  \begin{definition}
    \label{def:Euler_char}
    Given a non-singular projective curve $\calC$ and a pair $(\calC,D)$, the Euler Characteristic of $(\calC,D)$ is the integer $\chi_D(\calC) = 2g(\calC) -2 + \#D$, which corresponds to the degree of the \emph{log canonical divisor} $K_\calC + D$. 
  \end{definition}

  The Euler Characteristic encodes information of both the genus of the projective curve $\calC$ and of the divisor $D$, and its sign determines the arithmetic of the affine curve $\calC \setminus D$.

  \begin{theorem}
    Given a pointed projective curve $(\calC,D)$ defined over a number field $k$ and a finite set of places $S$ the following hold:
    \label{th:int_curves}
\begin{itemize}
  \item If $2g(\calC) - 2 + \# D \leq 0$ then the set of $(S,D)$-integral points is dense, possibly after a finite extension of $k$ and/or $S$;
  \item If $2g(\calC) - 2 + \# D > 0$ then the set of $(S,D)$-integral points is finite (Siegel's Theorem).
\end{itemize}
  \end{theorem}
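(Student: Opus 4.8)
Set $\chi := 2g(\calC)-2+\#D=\deg(K_\calC+D)$; the two bullets require genuinely different arguments, so the plan is to organize the proof around the sign of $\chi$.

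For the density statement ($\chi\le 0$) the point is that the inequality permits only $(g(\calC),\#D)\in\{(0,0),(0,1),(0,2),(1,0)\}$, which I would dispatch by hand. After a finite extension of $k$ a genus-zero curve acquires a rational point, hence is $\PP^1$, and the points of $D$ become rational; so $(0,0)$ is $\PP^1$, where $\calC(k)$ is dense (Proposition \ref{prop:lowgenus}) and, $\calC$ being projective, equals the set of integral points; $(0,1)$ is $(\PP^1,\{\infty\})$, whose integral points are $\calO_{k,S}\supseteq\Z$, infinite; and $(0,2)$ is $(\PP^1,\{0,\infty\})=\G_m$, whose integral points are $\calO_{k,S}^\times$, an infinite group once $S$ is enlarged to have at least two places, by Dirichlet's $S$-unit theorem. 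For $(1,0)$, after a finite extension $\calC$ is an elliptic curve with, by Proposition \ref{prop:lowgenus}, positive Mordell--Weil rank over a further finite extension; being projective it has integral points equal to its rational points, now infinite and hence Zariski dense.

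For the finiteness statement ($\chi>0$) the options are $g(\calC)\ge 2$ with arbitrary $D$, $g(\calC)=1$ with $\#D\ge 1$, or $g(\calC)=0$ with $\#D\ge 3$; this is Siegel's theorem. Since an $(S,D)$-integral point of $\calC\setminus D$ is in particular a $k$-rational point of $\calC$, the case $g(\calC)\ge 2$ follows at once from Faltings' Theorem \ref{thm:faltings}. For the other two cases I would reduce to Faltings by a covering trick together with the Chevalley--Weil theorem: after a finite extension of $k$, construct a finite morphism $\pi\colon\calC'\to\calC$, defined over a number field, that is \'etale over the open curve $\calC\setminus D$ and has $g(\calC')\ge 2$. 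When $g(\calC)=0$, $\#D\ge 3$, normalize so that $\{0,1,\infty\}\subseteq D$ and take $\calC'$ to be the smooth model of the Fermat curve $x^n+y^n=1$ with $n\ge 4$, mapping to $\PP^1\setminus\{0,1,\infty\}$ by $(x,y)\mapsto x^n$; this is \'etale of degree $n^2$ over $\PP^1\setminus\{0,1,\infty\}$ and $g(\calC')=(n-1)(n-2)/2\ge 2$. When $g(\calC)=1$, $\#D\ge 1$, use the Riemann existence theorem to build a cover of $\calC$ branched only over points of $D$ with enough total ramification that Riemann--Hurwitz forces $g(\calC')\ge 2$, and descend it to a number field. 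In either case Chevalley--Weil guarantees that every $(S,D)$-integral point of $\calC\setminus D$ lifts, over a single finite extension $k'/k$ independent of the point, to a $k'$-rational point of $\calC'$, of which there are finitely many by Faltings; hence there are finitely many integral points downstairs. As alternatives, for $g(\calC)=0$ one can skip the cover and reduce directly to the finiteness of solutions of the $S$-unit equation $u+v=1$ with $u,v\in\calO_{k,S}^\times$ (a classical consequence of Roth's theorem), and for $g(\calC)\ge 1$ one can run Siegel's original argument, combining the Mordell--Weil theorem for $\Jac(\calC)$ with Roth's theorem applied to division points.

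The density half is bookkeeping once Dirichlet's unit theorem and Mordell--Weil are available, so the substance sits in the $\chi>0$ half, and I do not expect a soft proof there: finiteness must ultimately be imported either from Diophantine approximation (Roth's theorem, equivalently finiteness for the $S$-unit equation) or from Faltings' theorem. In the Faltings packaging the technical heart is the covering step when $g(\calC)\le 1$ --- exhibiting a cover \'etale exactly over $\calC\setminus D$ that forces $g(\calC')\ge 2$, realizing it over a number field, and checking the Chevalley--Weil bound keeping all lifts inside one finite extension --- together with the routine observation that one may enlarge $S$ by the finitely many primes needed to spread the cover out integrally and to compare different models of $(\calC,D)$, since enlarging $S$ only makes the integral point set bigger and so leaves finiteness intact.
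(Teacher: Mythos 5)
Your proof is correct, and your treatment of the $\chi\le 0$ half matches the paper's Example exactly (dispatch the four cases $(g,\#D)\in\{(0,0),(0,1),(0,2),(1,0)\}$ via Proposition~\ref{prop:lowgenus}, $\Aff^1(\calO_{k,S})\cong\calO_{k,S}$, and Dirichlet's $S$-unit theorem for $\G_m$). Where you genuinely diverge is in the $\chi>0$ half. The paper sketches Siegel's original Diophantine-approximation argument: embed $\calC\hookrightarrow\Jac(\calC)$, use Weak Mordell--Weil to write an infinite sequence of integral points as $x_i=my_i+z$, apply Chevalley--Weil to the isogeny $\psi=[m](\cdot)+z$ to see the $y_i$ stay integral, and derive from the canonical-height quadraticity an approximation to a $v$-adic limit that contradicts Roth's theorem; the $g=0$ case is handled separately by the $S$-unit equation. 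Your primary route instead reduces \emph{everything} with $\chi>0$ to Faltings' Theorem~\ref{thm:faltings}: for $g\ge 2$ directly, and for $g\le 1$ by exhibiting a finite cover \'etale over $\calC\setminus D$ with $g(\calC')\ge 2$ (Fermat cover of $\PP^1\setminus\{0,1,\infty\}$ when $g=0$, a Riemann--Hurwitz-forcing branched cover via Riemann existence when $g=1$) and invoking Chevalley--Weil to lift integral points over a fixed finite extension. Both approaches are valid; yours is shorter and more uniform once Faltings is granted as a black box, while the paper's is the historically prior argument and is logically independent of Faltings (so it genuinely proves a weaker-input result using only Mordell--Weil, Roth, and Chevalley--Weil). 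You also correctly flag the bookkeeping points that the paper glosses over — enlarging $k$ and $S$ to spread out the cover, and that finiteness is only weakened by such enlargements — and you even mention the paper's route (and the $S$-unit route for $g=0$) as alternatives, so you clearly have both packagings in view.
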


  We treat the case of non positive Euler Characteristic in the following example.

\begin{example}
  When $\calC$ is smooth projective, in order for $\chi_D(\calC)$ to be non positive, there are only four cases to consider: if $g(\calC) = 0$, then $\# D \leq 2$, and if the $g(\calC)=1$ then $D = \emptyset$. For projective curves, i.e. when $D = \emptyset$, Proposition \ref{prop:lowgenus} shows that, up to a finite extension of $k$, the rational points are infinite. We showed that for projective varieties, integral and rational points coincide, which implies that in these cases the set of integral points is dense, up to a finite extension of the base field.
  
  If we consider affine curves, i.e. such that $D \neq \emptyset$, then there are only two remaining cases that we have to discuss, namely $\Aff^1 = (\PP^1, P)$ and $\G_m = (\PP^1, P + Q)$.

  We saw in Example \ref{ex:A1_integral} that integral points on $\Aff^1$ are infinite, and more generally we have that $\Aff^1(\calO_{S,D}) \cong \calO_{k,S}$. In the case of the multiplicative group $\G_m$ the integral points correspond to the group of $S$-units $\calO_{k,S}^*$. To see this consider $\G_m$ as the complement of the origin in $\Aff^1$, i.e. $\PP^1 \setminus \{ [0:1],[1:0] \}$. Then a point $[a:b] \in \G_m(k)$ is $(S,D)$-integral if, for every $\fp$ in $\calO_{k,S}$, we have that $\fp$ does not divide neither $a$ or $b$, i.e. $a$ and $b$ are both $S$-units. Finally, Dirichlet's Unit Theorem, implies that the group of $S$-units is finitely generated and has positive rank as soon as $\# S \geq 2$. In particular, for every number field $k$, there exists a finite extension for which the rank of $\calO_{k,S}^*$ is positive, and therefore such that $\G_m(\calO_{S,D})$ is infinite.
\end{example}

In the following example, we show that the set of $(S,D)$-integral points on the complement of three points in $\bP^1$ is finite.

\begin{example}[$\bP^1$ and three points]
  Consider the case of $(\PP^1, D)$ where $D = [0] + [1] + [\infty]$ over a number field $k$. In this case $\deg(K_{\PP^1} + D) > 0$, therefore Siegel's Theorem tells us that the number of $(S,D)$-integral points is finite, for every finite set of places $S$ of $k$. This can be deduced directly in this case using the $S$-unit equation Theorem as follows.
  
  Integral points in the complement of $[\infty]$ are integral points in $\mathbb{A}^1$ with respect to the divisor $[0] + [1]$, i.e. $u \in k$ such that $u \in \calO_S^*$ (which corresponds to  integrality with respect to $[0]$) and such that $1-u \in \calO_S^*$ (which corresponds to  integrality with respect to $[1]$). Then if we define $v = 1 - u \in \calO_S^*$, the set of $(S,D)$-integral points correspond to solutions in $S$-units of $x + y = 1$. The $S$-unit Theorem (see e.g. \cite[Theorem 1.2.4]{Corvaja_book}) then implies that the set of solutions is always finite, for every set of places $S$. 
\end{example}

%In the example of integral points for complements of points of $\PP^1$ we saw that\footnote{where}, up to a finite extension of the base field, the integral points were infinite in $(\PP^1, D)$ as soon as $D$ was supported in at most two points. On the other hand we saw that integral points on $(\PP^1,D)$, when $D$ consists of three points, correspond to solutions of the $S$-unit equation $u + v = 1$, and therefore are always finite.\medskip

Siegel's Theorem \cite{Siegel} (and for general number fields and set of places $S$ in \cite{Mahler, Lang_intpoints}) on the finiteness of the set of $(S,D)$-integral points is the analogue of Faltings' Theorem \ref{thm:faltings}. We give here a brief sketch of the proof. For the details we refer to \cite[D.9]{HindrySilverman}.

\begin{proof}[Sketch of proof of Siegel's Theorem]
  We will focus on the case $g(\calC) \geq 1$; the case of genus zero can be treated via finiteness of solutions of $S$-unit equations, see \cite[Theorem D.8.4]{HindrySilverman}.
 %: one can reduce, via unramified cover and the Chevalley-Weil Theorem, to the situation in which $\# D \geq 3$ (this might require extending the base field and the set $S$). 
 We can always assume that $\calC$ has at least one rational point, and use the point to embed $\calC \to \Jac (\calC)$, in its Jacobian.

 Suppose that $(x_i)$ is an infinite sequence of integral points on $\calC \setminus \calD$. Then by completeness of $\calC(k_v)$, with $v \in S$, up to passing to a subsequence, $(x_i)$ converges to a limit $\alpha \in \calC(\overline{k_v})$. In the embedding $\calC \subset \Jac(\calC)$, we see that for every positive integer $m$ the sequence $(x_i)$ becomes eventually constant in $\Jac(\calC)/m\Jac(\calC)$, which is finite by the Weak Mordell-Weil Theorem. In particular we can write $x_i = m y_i + z$, for some fixed $z \in \Jac(\calC)$.

 Let $\varphi_m:\Jac(\calC) \to \Jac(\calC)$ be the multiplication by $m$ map and define $\psi(x) = \varphi_m(x) + z = m.x + z$. Then $(y_i)$ are a sequence of integral points (since $\psi$ is unramified, and applying Chevalley-Weil Theorem, see \cite[Theorem 1.3.1]{Corvaja_book}) on $\psi^*(\calC)$ that converges to some $\beta \in \Jac(\calC)$ (eventually up to passing to an extension).

 By definition of the canonical height on $\Jac(\calC)$ (with respect to a fixed symmetric divisor) one has that $\hat{h}(\psi(y_i)) \gg m^2 \hat{h}(y_i)$. By increasing $m$ one gets very good approximations to $\alpha$ which eventually contradicts Roth's Theorem \cite[Theorem D.2.1]{HindrySilverman}.\end{proof}

The sketch of the proof illustrates a couple of very powerful ideas in Diophantine Geometry: the use of abelian varieties (here played by $\Jac(\calC)$ as ambient spaces with extra structure), the use of the so-called height machine, and techniques from Diophantine approximation. A different proof that avoids the use of the embedding in the Jacobian, thus allowing generalization to higher dimensions, has been given more recently by Corvaja-Zannier in \cite{CZSiegel}, replacing Roth's Theorem by the use of Schmidt's Subspace Theorem (see \cite[Chapter 3]{Zannier_book} for more details).

Finally, we can ask about hyperbolicity properties of affine curves, as in Definition \ref{def:cx_hyp}; it is easy to see that both $\Aff^1$ and $\G_m$ are not hyperbolic (via the exponential map), while on the other hand the complement of any number of points in a curve of genus one is hyperbolic (again applying Liouville's Theorem). Therefore, if $(\calC,D)$ is a pair of a non-singular projective curve $\calC$ and a reduced divisor $D$, both defined over a number field $k$, and $S$ is a finite set of places containing the archimedean ones, we can summarize the result described in the previous sections in the following table:

\begin{table}[h!]
\begin{tabular}{|l|l|l|}
\hline
 $\chi_D(\calC) = \deg(K_\calC + D)$ & complex hyperbolicity & $(S,D)$-integral points \\ \hline
 $ \leq 0$    & not hyperbolic  & potentially dense     \\ \hline
 $ > 0$    & hyperbolic  & finite \\ \hline
\end{tabular}
\end{table}

\section{Positivity of the canonical sheaf}\label{sec:canonical}
As we saw for curves, hyperbolicity was governed by the positivity of the canonical sheaf. In particular, we saw if $g(C) \leq 1$ then $\deg \omega_C \leq 0$ (and $C$ is \emph{not} hyperbolic), and if $g(C) \geq 2$ then $\deg \omega_C > 0$ (and $C$ \emph{is} hyperbolic). Conjecturally, postivity of the canonical sheaf governs hyperbolicity of algebraic varieties.  Before introducing the conjectures, we give a few examples of canonical sheaves on proper algebraic varieties. Recall we saw earlier that for a curve $C$, the canonical sheaf $\deg\omega_C = 2g -2$.

\begin{example}\cite[Example II.8.20.1]{hartshorne}\label{ex:canpn} First consider the Euler sequence 
\[ 0 \to \calO_{\bP^n} \to \calO_{\bP^n}(1)^{\oplus(n+1)} \to \calT_{\bP^n} \to 0, \] where $\calT_{\bP^n}$ denotes the tangent sheaf. Taking highest exterior powers, we see that $\omega_{\bP^n} = \calO_{\bP^n}(-n-1)$.\end{example}

\begin{example} If $A$ is an abelian variety, then the tangent bundle of $A$ is trivial. In particular, $\omega_X = \calO_X$. \end{example}

A standard way to calculate the canonical sheaf of algebraic varieties is via the \emph{adjunction formula}, which relates the canonical sheaf of a variety to the canonical sheaf of a hypersurface inside the variety. 

If $X$ is smooth and projective, and $Y$ is a smooth subvariety, then there is an inclusion map $i: Y \hookrightarrow X$. If we denote by $\calI$ the ideal sheaf of $Y \subset X$, then the conormal exact sequence gives (where $\Omega_X$ denotes the cotangent sheaf on $X$) 
\[ 0 \to \calI/\calI^2 \to i^*\Omega_X \to \Omega_Y \to 0.\] 
In particular, taking determinants yields 
\[ \omega_Y = i^*\omega_X \otimes \det(\calI/\calI^2)^{\vee}.\]

If we let the subvariety $Y$ to be a divisor $D \subset X$, then one obtains the following.

\begin{prop}[Adjunction formula] Let $X$ be a smooth projective variety with $D$ a smooth divisor on $X$. Then $$K_D = (K_X + D)|_D.$$ \end{prop}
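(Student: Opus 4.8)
The plan is to derive the adjunction formula directly from the conormal exact sequence that was just established in the text, specialized to the case where the smooth subvariety $Y$ is a divisor $D$. First I would recall that for a smooth divisor $D \subset X$, the ideal sheaf $\calI$ is an invertible sheaf, namely $\calI \cong \calO_X(-D)$, since a divisor is by definition cut out locally by a single equation. Consequently the conormal sheaf $\calI/\calI^2$ is a line bundle on $D$: tensoring the isomorphism $\calI \cong \calO_X(-D)$ by $\calO_X/\calI = \calO_D$ gives $\calI/\calI^2 \cong \calO_X(-D)|_D = \calO_D(-D)$. Dually, $(\calI/\calI^2)^\vee \cong \calO_D(D) = \calO_X(D)|_D$, which is the normal bundle $N_{D/X}$.

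Next I would invoke the determinant identity already displayed in the excerpt, namely $\omega_Y = i^*\omega_X \otimes \det(\calI/\calI^2)^\vee$, which was obtained by taking top exterior powers of the conormal sequence $0 \to \calI/\calI^2 \to i^*\Omega_X \to \Omega_Y \to 0$. Since in our situation $\calI/\calI^2$ is already a line bundle, its determinant is itself, so the formula reads $\omega_D = i^*\omega_X \otimes (\calI/\calI^2)^\vee = \omega_X|_D \otimes \calO_X(D)|_D$. Rewriting this in additive (divisor-class) notation, and using that restriction of line bundles corresponds to restriction of divisor classes, yields $K_D = (K_X + D)|_D$, which is exactly the claim.

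For rigor I would want to check that the conormal sequence is genuinely short exact here — i.e. that the left map is injective — which holds precisely because $D$ is smooth (equivalently, $X$ is smooth along $D$), so that $D$ is a local complete intersection and the conormal sheaf is locally free of the expected rank one; this is where the smoothness hypothesis on both $X$ and $D$ is actually used. One small point worth spelling out is the identification $\calI \cong \calO_X(-D)$ and the compatibility of $\otimes_{\calO_X}\calO_D$ with restriction, but these are standard.

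The main obstacle, such as it is, is not conceptual but bookkeeping: one must be careful about dualizing, about the sign conventions relating $\calO_X(D)$ to the ideal sheaf $\calO_X(-D)$, and about translating cleanly between the line-bundle statement $\omega_D \cong (\omega_X \otimes \calO_X(D))|_D$ and the divisor-class statement $K_D = (K_X+D)|_D$. Since the heavy lifting (the conormal/determinant sequence) has already been done in the text, the proof itself is short: identify $\calI/\calI^2$ with $\calO_D(-D)$, dualize, and substitute.
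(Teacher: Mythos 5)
Your proof is correct and follows essentially the same route the paper indicates: it specializes the conormal exact sequence and the determinant identity $\omega_Y = i^*\omega_X \otimes \det(\calI/\calI^2)^{\vee}$ to the case $Y = D$ a divisor, identifies $\calI/\calI^2 \cong \calO_D(-D)$, and dualizes. You have simply spelled out the identifications that the paper leaves implicit.
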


\begin{example}\label{ex:ci} We can use the adjunction formula to calculate that the canonical sheaf of $X$ a smooth hypersurface of degree $d$ in $\bP^n$ is $\omega_X \cong \calO_X(d-n-1)$.
We note one can do similar calculations in the case of complete intersections. 
\end{example}

Now that we have shown some examples of computing canonical sheaves, we introduce the notions from birational geometry we will use to understand positivity of the canonical sheaf and hyperbolicity.  Our main reference is \cite{Lazarsfeld}. 

\subsection{Notions from birational geometry}
Let $X$ be a projective variety and let $L$ be a line bundle on $X$. For each $m \geq 0$ such that $h^0(X, L^{\otimes m}) \neq 0$, the linear system $| L^{\otimes m}|$ induces a rational map \[\phi_m = \phi_{|L^{\otimes m}|} : X \dashrightarrow \bP H^0(X, L^{\otimes m}).\] We denote by $Y_m = \phi_m(X, L)$ the closure of its image. 

\begin{definition} Let $X$ be normal. The \emph{Iitaka dimension} of $(X,L)$ is 
\[ \kappa(X, L) = \max_{m > 0} \{ \dim \phi_m(X,L) \}, \] as long as $\phi_m(X,L) \neq \emptyset$ for some $m$. Otherwise, we define $\kappa(X,L) = - \infty$. \end{definition}

In particular, either $\kappa(X,L) = -\infty$ or $0 \leq \kappa(X,L) \leq \dim X$. 

\begin{remark} If $X$ is not normal, consider the normalization $\nu: X^{\nu} \to X$ and take $\kappa(X^{\nu}, \nu^*L)$. \end{remark}

\begin{example}[Kodaira dimension] If $X$ is a smooth projective variety and $K_X$ is the canonical divisor, then $\kappa(X, K_X) = \kappa(X)$ is the \emph{Kodaira dimension} of $X$. \end{example}

The Kodaira dimension is a birational invariant, and the Kodaira dimension of a singular variety $X$ is defined to be $\kappa(X')$ where $X'$ is any desingularization of $X$. However, care needs to be taken in this case. When $X$ is not smooth, the dualizing sheaf $\omega_X$ can exist as a line bundle on $X$, but $\kappa(X, \omega_X) > \kappa(X)$. This is the case, e.g. if $X$ is the cone over a smooth plane curve of large degree (see \cite[Example 2.1.6]{Lazarsfeld}). 

\subsubsection{Positivity of line bundles}\label{sec:posproper}

\begin{definition}\label{def:lbundles} A line bundle $L$  on a projective variety $X$ is \emph{ample} if for any coherent sheaf $F$ on $X$, there exists an integer $n_F$ such that $F \otimes L^{\otimes n}$ is generated by global sections for $n > n_F$. Equivalently, $L$ is ample if a positive tensor power is very ample, i.e. there is an embedding $j: X \to \bP^N$ such that $L^{\otimes n} = j^*(\calO_{\bP^N}(1))$. \end{definition}

The following result is a standard way for checking if a divisor is ample. 

\begin{theorem}[Nakai-Moishezon]\label{thm:NM} Let $X$ be a projective scheme and let $D$ be a divisor. The divisor $D$ is ample if and only if $D^{\dim Y}.Y > 0$ for all subvarieties $Y \subset X$. \end{theorem}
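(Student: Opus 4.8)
The plan is to split the equivalence into its two implications, the forward one being a short computation and the converse an induction on $\dim X$.

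For the direction ``$D$ ample $\Rightarrow D^{\dim Y}\cdot Y>0$'', I would restrict to a subvariety $Y\subseteq X$ of dimension $d$: since $\OO_Y(D)=\OO_X(D)|_Y$ is again ample and $D^d\cdot Y=(D|_Y)^d$ by the projection formula, everything reduces to the assertion that the top self-intersection of an ample divisor $H$ on a $d$-dimensional projective variety is positive. Replacing $H$ by a multiple I may take $H$ very ample, so that the associated embedding $j\colon Y\hookrightarrow\bP^N$ has $\OO_Y(H)=j^\ast\OO_{\bP^N}(1)$, and then $H^d=\deg_{\OO(1)}j(Y)>0$.

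For the converse, assume $D^{\dim Y}\cdot Y>0$ for every subvariety $Y\subseteq X$. First I would reduce to the case where $X$ is an integral projective scheme: a line bundle is ample on $X$ iff it is ample on $X_{\red}$ iff it is ample on each irreducible component, and the intersection numbers occurring in the hypothesis are insensitive to passing to a component, so the hypothesis descends. Then I induct on $n=\dim X$. The base case $n=1$ is Riemann--Roch on a curve, where $\deg D>0$ already gives $h^0(X,\OO_X(mD))\neq 0$ for $m\gg 0$ and then very ampleness. For the inductive step the crucial first move is to produce an effective divisor in $|mD|$ for some $m>0$. By Snapper's theorem $\chi(X,\OO_X(mD))$ is a polynomial in $m$ of degree $n$ with leading coefficient $D^n/n!>0$, hence tends to $+\infty$; to upgrade this to nonvanishing of $h^0$ I would pick a reduced irreducible very ample divisor $A\subset X$ (enlarging the ground field if necessary, which is harmless for ampleness), note that the hypothesis descends to $(A,D|_A)$ so that $D|_A$ is ample on $A$ by induction, and then exploit the vanishing $H^i(A,\OO_A(mD))=0$ for $i>0$, $m\gg 0$ in the long exact sequences attached to
\[
0\longrightarrow\OO_X(mD-A)\longrightarrow\OO_X(mD)\longrightarrow\OO_A(mD)\longrightarrow 0
\]
and its twists, combined with Serre duality on $X$ to control the top cohomology, to force $h^0(X,\OO_X(mD))>0$ for large $m$.

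Once an effective $E\in|mD|$ is in hand, the hypothesis restricts to $E$ (its subvarieties sit inside $X$), so $D|_E$ is ample on $E$ by the inductive hypothesis and $H^i(E,\OO_E(kD))=0$ for all $i>0$ and $k\gg 0$. Running the exact sequences $0\to\OO_X((k-m)D)\to\OO_X(kD)\to\OO_E(kD)\to 0$ then shows that $H^1(X,\OO_X(kD))$ stabilizes and that $\OO_X(kD)$ is globally generated for $k\gg 0$; the induced morphism $\phi\colon X\to\bP^N$ satisfies $\OO_X(kD)=\phi^\ast\OO_{\bP^N}(1)$, and $\phi$ must be finite, for otherwise a curve $C$ in a positive-dimensional fibre would have $D\cdot C=0$, contradicting $D^1\cdot C>0$. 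Since the pullback of an ample bundle along a finite morphism is ample, $\OO_X(kD)$, and hence $D$, is ample. The step I expect to be the genuine obstacle is the production of an effective divisor in $|mD|$: before ampleness of $D$ is known, the only positivity at our disposal is that of $D|_A$ on a hyperplane section, and squeezing honest global sections of $\OO_X(mD)$ out of the mere positivity of its Euler characteristic is exactly the delicate cohomological point that the whole induction is arranged to handle.
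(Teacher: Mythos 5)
The paper states the Nakai--Moishezon criterion as a standard background result and does not prove it, so there is no ``paper's proof'' to compare against; I will instead assess your argument on its own terms.

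Your overall architecture is the standard Kleiman proof and the forward direction is fine. The genuine gap is exactly where you suspect it: extracting $h^0(X,\OO_X(mD))>0$ from $\chi(\OO_X(mD))\to\infty$. Using a single very ample $A$ and the sequence $0\to\OO_X(mD-A)\to\OO_X(mD)\to\OO_A(mD)\to 0$ only gives, for $i\ge 2$ and $m\gg 0$, that $h^i(\OO_X(mD))=h^i(\OO_X(mD-A))$; iterating relates $h^i(mD)$ to $h^i(mD-kA)$, which drifts away from the graded system you actually need to control and gives no bound as $m$ grows. Your appeal to Serre duality does not repair this: one would need $h^0(\omega_X(-mD))=0$ for $m\gg 0$, and before $D$ is known ample (or even $\mathbf{Q}$-effective) there is no mechanism forcing $\omega_X-mD$ to have no sections; moreover, without Cohen--Macaulay hypotheses on $X$, Serre duality in this naive form is unavailable. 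The standard fix is to write $D\sim A-B$ with \emph{both} $A$ and $B$ very ample effective (possible because $kH$ and $kH+D$ are both very ample for a fixed very ample $H$ and $k\gg 0$). Then $mD-A\sim(m-1)D-B$, and the two sequences
\[
0\to\OO_X(mD-A)\to\OO_X(mD)\to\OO_A(mD)\to 0,\qquad
0\to\OO_X((m-1)D-B)\to\OO_X((m-1)D)\to\OO_B((m-1)D)\to 0,
\]
together with the inductive vanishing on $A$ and $B$, give $h^i(\OO_X(mD))=h^i(\OO_X((m-1)D))$ for $i\ge 2$ and $m\gg 0$. So the higher cohomology eventually stabilizes, $\chi\to\infty$ forces $h^0-h^1\to\infty$, and in particular $h^0(\OO_X(mD))>0$. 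Once that is in place, your remaining steps (restriction to an effective $E\in|mD|$, global generation of $\OO_X(kD)$ via vanishing of $H^1$, finiteness of the resulting morphism because no curve can be contracted when $D\cdot C>0$) are correct; just note that $E$ need not be irreducible, so the inductive hypothesis should be formulated for arbitrary projective schemes of smaller dimension, which your initial reduction to the integral case already accommodates.
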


\begin{corollary} If $X$ is a surface, then a divisor $D$ is ample if and only if $D^2 > 0$ and $D. C > 0$ for all curves $C \subset X$. \end{corollary}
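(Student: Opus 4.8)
The plan is to deduce this directly from the Nakai--Moishezon criterion (Theorem \ref{thm:NM}), which characterizes ampleness of $D$ by the inequalities $D^{\dim Y} \cdot Y > 0$ ranging over all subvarieties $Y \subset X$. The first step is the trivial but essential observation that on a surface the irreducible closed subvarieties come in exactly three types sorted by dimension: closed points ($\dim Y = 0$), irreducible curves ($\dim Y = 1$), and the surface itself ($\dim Y = 2$); reducible subvarieties contribute nothing new since the relevant intersection numbers split over irreducible components.

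The second step is simply to read off what Nakai--Moishezon demands on each type. For a closed point $P$ one has $D^{0} \cdot P = 1 > 0$ by convention, so points impose no condition at all. For an irreducible curve $C$ the inequality is $D^{1} \cdot C = D \cdot C > 0$. For $Y = X$ it is $D^{2} \cdot X = D^{2} > 0$. Thus the entire family of Nakai--Moishezon inequalities collapses precisely to the two stated conditions, giving both implications at once. (For the forward direction one can equally well argue directly: a positive multiple of an ample $D$ is very ample, hence has positive degree on every curve and positive self-intersection number, yielding $D\cdot C>0$ and $D^2>0$.)

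I do not expect a genuine obstacle here, since the statement is a formal specialization of an already-cited theorem; the only items warranting a word of care are confirming that the quoted form of Nakai--Moishezon really ranges over all subvarieties (including the $0$-dimensional ones, where the convention $D^0\cdot P = 1$ makes the condition vacuous) and that reducible subvarieties reduce to the irreducible case. If one instead wanted a proof that does not invoke Theorem \ref{thm:NM}, the hard direction would be the converse implication, which is essentially the content of Nakai--Moishezon itself and would require the standard Riemann--Roch/cohomology argument on surfaces; I would not reprove that here but simply cite it.
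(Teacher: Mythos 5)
Your proposal is correct and matches the paper's intent exactly: the paper states this as an immediate corollary of Theorem \ref{thm:NM} with no further argument, and your specialization to the three dimension-types of subvarieties of a surface is precisely that argument made explicit. The only cosmetic remark is that most formulations of Nakai--Moishezon already restrict $Y$ to \emph{positive-dimensional} irreducible subvarieties, so the discussion of $D^0 \cdot P$ for points, while harmless, is not usually needed.
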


\begin{example} By Riemann-Roch, a divisor $D$ on a curve $C$ is ample if and only if $\deg D >0$. \end{example}

\begin{example} We saw in Example \ref{ex:canpn} that $\omega_{\bP^n} = \calO_{\bP^n}(-n-1)$. Therefore, we see that $\omega_{\bP^n}$ is \emph{never ample} for any $n$, as no power of $\omega_{\bP^n}$ will have non-zero sections. It is not so hard to see that $-\omega_{\bP^n}$ is ample for all $n$. This is referred to as \emph{anti-ample}. \end{example} 

\begin{example} In Example \ref{ex:ci} we computed the canonical sheaf for hypersurfaces of degree $d$ in $\bP^n$ using the adjunction formula. From this we see that
\begin{enumerate}
\item If $d \leq n$ then $\omega_X$ is anti-ample. 
\item if $d = n + 1$ then $\omega_X = \calO_X$, and thus is not ample. 
\item If $d \geq n + 2$ then $\omega_X$ is very ample (exercise using Serre Vanishing).
\end{enumerate}
\end{example}

\begin{definition}\label{def:big} A line bundle $L$ on a projective variety $X$ is \emph{big} if $\kappa(X,L) = \dim X$. A Cartier divisor $D$ on $X$ is \emph{big} if $\calO_X(D)$ is big. \end{definition}

\begin{remark}\label{rmk:big}There are some standard alternative criteria for big divisors. One is that there exists a constant $C > 0$ such that $h^0(X, \calO_X(mD)) \geq C \cdot m^n$ or all sufficiently large $m$ (see \cite[Lemma 2.2.3]{Lazarsfeld}). Another is that $mD$ can be written as the sum of an ample plus effective divisor (Kodaira's Lemma, see \cite[Corollary 2.2.7]{Lazarsfeld}).\end{remark}

\begin{definition} We say that $X$ is of \emph{general type} if $\kappa(X) = \dim(X)$, i.e. $\omega_X$ is \emph{big}.  \end{definition}

\begin{example} We see immediately that ample implies big so that varieties with ample canonical sheaves are of general type. In this case, some power $\omega_X^{\otimes m}$ for $m \gg 0$ embeds $X$ into a projective space.  \end{example}

\begin{example} For curves big is the same as ample, so general type is equivalent to $g(C) \geq 2$. \end{example}

Some examples of varieties of general type are high degree hypersurfaces (in $\bP^3$ we require $d \geq 5$) and products of varieties of general type (e.g. product of higher genus curves). It is worth noting that projective space $\bP^n$ and abelian varieties are \emph{not} of general type.  
 
\begin{remark}\label{rmk:pullback}
There exist big divisors that are not ample. One of the standard ways to obtain examples is to note that bigness is preserved under pullback via birational morphisms, but ampleness is not. Suppose $X$ and $Y$ are proper, and $f: X \to Y$ is a birational morphism. A divisor $D$ on $Y$ is big if and only if $f^*D$ is big on $X$. This is easy to see using Definition \ref{def:big}, since $X$ and $Y$ have isomorphic dense open subsets. 

We now give an example to show that ampleness is not preserved. Suppose $H$ be a line in $\bP^2$, and let $f: X \to \bP^2$ be the blowup of $\bP^2$ at a point with exceptional divisor $E$. Then $f^*H$ is big by the above discussion, but $f^*H$ is \emph{not ample} since the projection formula gives that $(f^*H).E = 0$, and thus violates Theorem \ref{thm:NM}. 
\end{remark}

\subsection{Log general type}
As we saw for proper curves, hyperbolicity was essentially governed by the positivity of the canonical sheaf. For affine curves, we saw that hyperbolicity was governed by positivity of the log canonical divisor. As a result, we discuss a mild generalization that will be needed later -- the notion of \emph{log general type} for quasi-projective varieties. Recall that we saw in Remark \ref{rmk:qproj} that given a quasi-projective variety $V$, one can always relate it to a pair $(X,D)$ of a smooth projective variety and normal crossings divisor $D$. 

\begin{definition}\label{def:loggt}
We say that $V$ (or the pair $(X,D)$) is of \emph{log general type} if $\omega_X(D)$ is big. 
\end{definition}

Of course any pair $(X,D)$ with $X$ of general type will be of log general. Perhaps more interesting examples are when $X$ does not have its own positivity properties. 

\begin{example}[Curves]
A pointed curve $(C, D = \sum p_i)$ is of log general type if:
\begin{itemize}
\item $g(C) = 0$ and $\#(D) \geq 3$, 
\item $g(C) = 1$ and $\#(D) \geq 1$, or
\item $g(C) \geq 2$. 
\end{itemize}
This is because $\deg\omega_C = 2g-2$ and so $\deg \omega_C(D)  = 2g-2+\#D$. 
\end{example}

\begin{example} If $X = \bP^2$ and $D$ is a normal crossings divisor, then the  pair $(X, D)$ is of log general type if the curve $D$ has $\deg(D) \geq 4$. Again, this is because $\omega_X(D) \cong \calO_{\bP^2}(-3 + \deg(D))$. More generally, if $X = \bP^n$ then one requires $\deg D \geq n+2$. \end{example}

As we will see in the next section, there are conjectural higher dimensional analogues of Faltings' Theorem which assert hyperbolicity properties of projective varieties $X$ which are of general type. In the quasi-projective setting, there are also conjectural analogues that ask for log general type. 

\section{Lang's Conjecture}\label{sec:Lang}

We are now in a position to state the conjectural higher dimensional generalization of Faltings' Theorem. The main idea is that varieties of general type should satisfy  an analogous arithmetic behavior to curves of high genus.

The first conjecture that we mention is due to Bombieri (in the case of surfaces) and Lang: Bombieri addressed the problem of degeneracy of rational points in varieties of general type in a lecture at the University of Chicago in 1980, while Lang gave more general conjectures centered on the relationship between the distribution of rational points with hyperbolicity and Diophantine approximation (see \cite{Lang1997} and \cite{Lang1974}). The conjecture reads as follows:

\begin{conjecture}[Lang's Conjecture, Bombieri-Lang for surfaces]\label{conj:lang} Let $X$ be a (projective) variety of general type over a number field $k$. Then $X(k)$ is not Zariski dense. \end{conjecture}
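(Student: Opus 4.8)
Lang's conjecture is open in general, so what follows is a strategy for reducing it to, and then confronting, the one step that has resisted all known techniques.

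The plan is first to normalize the problem. Since Zariski density of $k$-rational points is a birational invariant among smooth projective varieties over a fixed number field — two such that are $k$-birational contain $k$-isomorphic dense open subsets — I would replace $X$ by a smooth model and then run the minimal model program to reach the canonical model, on which $\omega_X$ is ample; so it suffices to treat $X$ smooth projective with $\omega_X$ ample (or at least big and semiample). Equivalently, one wants: bigness of $\omega_X$ forces $X(k)$ into a proper Zariski-closed subset.

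Next I would stratify according to the structure of the Albanese map $a\colon X \to A$. If $a$ is generically finite onto its image, then $a(X)\subset A$ is a subvariety of an abelian variety which, being of general type, is not a translate of an abelian subvariety; Faltings' theorem on subvarieties of abelian varieties (\cite{Faltings_ab}) then puts $a(X)(k)$, hence $X(k)$ away from the ramification and indeterminacy loci, into a proper closed subset, and one closes the argument by Noetherian induction on that locus. This is exactly the mechanism behind the ``known cases'' of Section~\ref{sec:knowncases} — e.g. symmetric powers $C^{(d)}$ of a curve of genus $\geq 2$, and any variety of general type dominating such — and, in the same spirit, if $\Omega^1_X$ is positive (ample, and globally generated for the arithmetic statement) one invokes the finiteness results quoted in the introduction.

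The hard part — where the plan stalls — is the complementary case: $X$ of general type with $a$ constant or with $a(X)$ of small dimension, and with $\Omega^1_X$ far from ample; the model examples are simply connected surfaces of general type and smooth hypersurfaces of large degree in $\PP^n$. For these one needs a genuinely new Diophantine input: an inequality bounding the heights of rational points outside a fixed proper subvariety, of the kind formalized by Vojta's conjecture (Conjecture~\ref{conj:Vojta}, Section~\ref{sec:vojta}). Every available mechanism for such inequalities — Roth's theorem, the Schmidt subspace theorem, and their known refinements — requires an auxiliary structure (an embedding into a semiabelian variety, or a morphism to projective space whose branch divisor has many components) that a general variety of general type does not carry, and extracting the inequality from bigness of $\omega_X$ alone is precisely the unproven content of Vojta's conjecture. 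I would therefore flag this last step as the genuine obstacle: absent new ideas in Diophantine approximation the conjecture is out of reach, and the most one can presently do — as recorded in Section~\ref{sec:knowncases} — is to harvest the cases where the Albanese or positivity reductions apply.
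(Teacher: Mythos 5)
The statement you were asked to ``prove'' is labelled a \emph{conjecture} in the paper, and the paper offers no proof of it: Conjecture~\ref{conj:lang} is open, and Section~\ref{sec:knowncases} records only the special cases following from Faltings' theorem on subvarieties of abelian varieties, while Proposition~\ref{prop:V->BL} records that it would follow from Vojta's Conjecture~\ref{conj:Vojta}. You correctly recognize this, and your proposal is not a proof but an honest strategy survey; as such it is accurate and well aligned with what the paper actually says. Your two-step reduction (pass to a smooth model and then to the canonical model, noting birational invariance of Zariski density over $k$; then stratify by the Albanese map and invoke Theorem~\ref{thm:bigfaltings} when the Albanese is generically finite onto its image) is the standard route and matches the mechanism the paper uses in, e.g., the sketch of Theorem~\ref{thm:moriwaki}. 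Your identification of the genuine obstruction — that when the Albanese is uninformative and $\Omega^1_X$ is not positive (simply connected surfaces of general type, high-degree hypersurfaces) one needs a height inequality of Vojta type, and no current Diophantine-approximation technology produces it from bigness of $\omega_X$ alone — is exactly the point the paper makes in Section~\ref{sec:vojta} via Proposition~\ref{prop:V->BL}. In short: there is nothing to check your argument against, because the paper has no proof; your write-up is a correct description of the known reductions and of where they stop.
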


We note that one cannot expect that $X(k)$ would be finite once $\dim X \geq 2$, as varieties of general type can, for example, contain rational curves, which in turn (potentially) contain infinitely many rational points.

\subsection{Generalizations of Lang and other applications}

From the previous discussion we have seen that Conjecture \ref{conj:lang} conjecturally extends Faltings' Theorem \ref{thm:faltings}. It is natural to ask whether a similar extension exists for Siegel's Theorem \ref{th:int_curves}. Indeed such a generalization exists: the role of curves with positive Euler Characteristic is played now by pairs of log general type. Then, the conjectural behavior of integral points is summarized in the following conjecture due to Vojta, and, in the following reformulation, using ideas of Lang.

\begin{conjecture}[Lang-Vojta]\label{conj:LV1}
Let $X$ be a quasi projective variety of \emph{log general type} defined over a number field $k$ and let $\calO_{S,k}$ the ring of $S$-integers for a finite set of places of $k$ containing the archimedean ones. Then the set $X(\calO_{S,k})$ is not Zariski dense.
\end{conjecture}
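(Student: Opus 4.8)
Since Conjecture \ref{conj:LV1} is one of the guiding open problems of the subject, what follows is a strategy sketch together with an honest assessment of where it stalls, rather than a complete argument.

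\emph{Setup.} By Remark \ref{rmk:qproj} one may assume $X = Y \setminus D$ for a log smooth pair $(Y,D)$ with $Y$ smooth projective, $D$ a normal crossings divisor, and $K_Y + D$ big. Fix a model $(\calY,\calD)$ over $\calO_{k,S}$ and an ample divisor $A$ on $Y$. Bigness gives, for $m \gg 0$, a birational map $\phi_m \colon Y \dashrightarrow Y_m$ defined by $|m(K_Y+D)|$; let $Z \subsetneq Y$ be the union of $D$, the base locus of $\phi_m$, and the exceptional locus where $\phi_m$ fails to be an isomorphism. The goal is to show that all but finitely many $P \in X(\calO_{k,S})$ lie on $Z$, which gives non-density.

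\emph{Two mechanisms.} There are two standard ways to force this. The first is the quasi-Albanese map $\alpha \colon X \to \mathrm{Alb}(X)$ into a semi-abelian variety: combined with the theorems of Vojta and Faltings on integral points of semi-abelian varieties (developed in Section \ref{sec:semiabelian}), it pins integral points into translates of semi-abelian subvarieties, and when $\alpha$ is sufficiently non-degenerate — as under the positivity hypotheses of Theorems \ref{thm:main} and \ref{thm:almostample} — this already yields the conclusion. The second is the Subspace Theorem in the Corvaja–Zannier style: one uses a basis of $H^0(Y, m(K_Y+D))$, or twisted symmetric differentials, to write down a system of approximation inequalities at the places of $S$ which, for an integral sequence escaping every proper subvariety, contradicts the Subspace Theorem. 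Either route is designed to culminate in a height inequality $h_{K_Y+D}(P) \le \varepsilon\, h_A(P) + O_\varepsilon(1)$ valid off a proper closed subset (this is the content of Vojta's conjecture, Section \ref{sec:vojta}); once that holds, the Northcott finiteness property finishes the proof outside $Z$.

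\emph{The obstacle.} The crux — and the reason this remains a conjecture rather than a theorem — is that bigness of $K_Y+D$ \emph{alone} does not supply either mechanism with its input. One does not know how to produce from $\kappa(Y, K_Y+D) = \dim Y$ symmetric differentials or jet differentials vanishing along an ample divisor (this is precisely the Green--Griffiths--Lang problem), and the quasi-Albanese of a log general type variety can be trivial. Thus the real gap is the construction of the auxiliary global sections on which every Diophantine-approximation argument depends; without extra positivity (ampleness or almost ampleness of $\Omega^1_Y(\log D)$, a large fundamental group, or a suitable fibration structure, as in the cases discussed in Section \ref{sec:knowncases}), there is at present no way past this step, which is exactly why Conjecture \ref{conj:lang} and its log version are open in general.
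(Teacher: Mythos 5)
You correctly recognize that Conjecture \ref{conj:LV1} is an open conjecture, not a theorem, and the paper accordingly offers no proof of it; the best the paper does is (i) derive it from Vojta's main conjecture via Proposition \ref{prop:V->LV}, and (ii) prove it under additional positivity hypotheses on the log cotangent sheaf (Theorem \ref{thm:main}, via the quasi-Albanese and the Vojta--Faltings theorem on semi-abelian varieties). Your two ``mechanisms'' match exactly the two strands the paper develops --- the quasi-Albanese route of Section \ref{sec:semiabelian} and the Corvaja--Zannier Subspace Theorem route mentioned after Example \ref{ex:P24} --- and your identification of the obstacle (bigness of $K_Y+D$ alone does not produce the auxiliary sections or the non-degeneracy of the quasi-Albanese that either method requires; this is precisely the gap between ``log general type'' and ``almost ample log cotangent'') is accurate and is the same reason the paper treats the general statement as conjectural while proving the almost-ample case. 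One small refinement worth adding: the height inequality you aim for, $h_{K_Y+D}(P)\le \varepsilon\,h_A(P)+O_\varepsilon(1)$, only gives non-density after Northcott because $K_Y+D$ is big (so some multiple dominates an ample), and one must also pass to integral points so that $m_{S,D}(P)=h_D(P)+O(1)$; this is the reduction carried out in Proposition \ref{prop:V->LV}, and spelling it out would close the loop between your ``two mechanisms'' paragraph and the Northcott step.
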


As was with the Lang conjecture for projective varieties, the finiteness result of Siegel becomes non-density. In higher dimensions, the positivity of the log canonical divisor is not sufficient to exclude the presence of infinitely many integral points. In particular, varieties of log general type of dimension at least two can contain (finitely many) curves that are \emph{not} of log general type.

\begin{example}\label{ex:P24} ($\PP^2$ and 4 lines). 
We can consider $D = x_0 x_1 x_2 (x_0 + x_1 + x_2)$ as a divisor in $\PP^2_\Q$, and $S$ a finite set of places. Then $(S,D)$-integral points are a subset of points of the form $[x_0 : x_1 : x_2]$ where $x_0, x_1$ and $x_2$ are $S$-units. This is equivalent to requiring that the points are integral with respect to the three lines $x_0 = 0$, $x_1 = 0$ and $x_2 = 0$. In particular we can consider points of the form $(x_0:x_1:x_2) = (1:x:y)$ with $x,y \in \calO_S^*$. The integrality with respect to the fourth line implies that $1 + x + y$ is not 0 modulo every $\fp \notin S$. So if we define $z := 1 + x + y$ then $z$ is a $S$-unit and we have that
	\[
		z - x - y = 1
	\]
	which is the classical $S$-unit equation to be solved in units. Then, as an application of Schmidt's subspace theorem \cite[Theorem 2.2.1]{vojtabook}, one gets that there are only finitely many solutions outside the three trivial families:
	\begin{align*}
 \begin{dcases}
        z =1 \\
        x = u \\
       y = -u \end{dcases}
& &  
	 \begin{dcases}
       z = u \\
        x = -1 \\
       y = u \end{dcases}
       & &
	 \begin{dcases}
       z = u \\
        y = -1 \\
       x = u \end{dcases}
       \end{align*}
 These families correspond to curves in $X$ with non positive Euler characteristic (since they intersect the divisor $D$ in only two points - passing through two of the singular points of $D$). In particular by Theorem \ref{th:int_curves}, there will be infinitely many $(S,D)$-integral points contained in these curves, up to a finite extension of $\Q$.
\end{example}

Conjecture \ref{conj:LV1} is a consequence of a more general conjecture, proposed by Paul Vojta and related to his ``landmark Ph.D. Thesis'', which gave the basis for a systematic treatment of analogies between value distribution theory and Diophantine geometry over number fields. Based on this analogy Vojta formulated a set of far-reaching conjectures. For a detailed description we refer to \cite{vojtabook} as well as chapters in the books \cite{HindrySilverman,BombieriGubler,rubook}. We will discuss this in Section \ref{sec:ff}. 

Finally we mention that more recently Campana has proposed a series of conjectures based on a functorial geometric description of varieties that aims at classify completely the arithmetic behaviour based on geometric data. For this new and exciting developments we refer to Campana's chapter \cite{Camp_book} in this book.

\subsection{Known cases of Lang's conjecture}\label{sec:knowncases}
As noted above, Faltings' second proof of the Mordell conjecture followed from his resolution of the following conjecture of Lang. 

\begin{theorem}\cite{Faltings_ab,Faltings2}\label{thm:bigfaltings}
Let $A$ be an abelian variety over a number field $K$ and let $X$ be a geometrically irreducible closed subvariety of $A$ which is not a translate of an abelian subvariety over $\overline{K}$. Then $X \cap A(K)$ is not Zariski dense in $X$. \end{theorem}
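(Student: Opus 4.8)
The plan is to follow the Vojta--Faltings strategy, in which the arithmetic is driven by the N\'eron--Tate height regarded as a quadratic form and the geometry by Faltings' Product Theorem. First I would reduce to a statement about a finitely generated group: by the Mordell--Weil theorem $A(K)$ is finitely generated, so $\Gamma := A(K)$ spans a finite-dimensional real vector space $V = \Gamma\otimes_\Z\R$. Fix a symmetric ample line bundle $L$ on $A$; the associated canonical height $\hat h$ is a positive definite quadratic form on $V$, giving a norm $|P| := \hat h(P)^{1/2}$ and an inner product $\langle\cdot,\cdot\rangle$. The goal becomes: $X\cap\Gamma$, viewed inside $V$, is not Zariski dense in $X$.

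Next comes a geometric reduction by induction on $\dim X$. Let $Z\subseteq X$ be the union of all translates of positive-dimensional abelian subvarieties of $A$ that are contained in $X$. By the structure theory of subvarieties of abelian varieties (Ueno, Kawamata), $Z$ is Zariski closed, and $Z = X$ precisely when $X$ is itself a translate of an abelian subvariety, which is excluded by hypothesis. Each positive-dimensional component of $Z$ is again a subvariety of an abelian variety, of smaller dimension after passing to an appropriate quotient, so by the inductive hypothesis it suffices to prove that $(X\smallsetminus Z)\cap\Gamma$ is \emph{finite}. After translating and replacing $A$ by the abelian variety generated by $X-X$, I may further assume that $X$ generates $A$.

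The engine is then two height-geometric estimates. \emph{Mumford's gap principle}: because $X\hookrightarrow A$ is non-degenerate, there is a constant $\gamma<1$, depending only on $(X,A,L)$, such that any $P,Q\in X\cap\Gamma$ of large and comparable height satisfy $\langle P,Q\rangle\le\gamma\,|P|\,|Q|$; this comes from intersection-theoretic estimates for the pullback of a theta divisor under the difference map. \emph{Vojta's inequality}: there are constants $C,C_0$ so that for all $P,Q\in(X\smallsetminus Z)\cap\Gamma$ with $|P|\ge C_0$ and $|Q|\ge C|P|$ one has $\langle P,Q\rangle\le \tfrac34\,|P|\,|Q|$. Granting both, cover the unit sphere of the finite-dimensional space $V$ by finitely many spherical caps of small angular radius: two points of $(X\smallsetminus Z)\cap\Gamma$ whose directions lie in one cap have nearly aligned position vectors, contradicting the gap principle when their heights are comparable and Vojta's inequality when they are very different. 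Hence each cap meets $(X\smallsetminus Z)\cap\Gamma$ in a bounded set, so $(X\smallsetminus Z)\cap\Gamma$ is finite, which closes the induction.

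The hard part, which I expect to be the main obstacle, is establishing Vojta's inequality. One works on a self-product $X^m$ for $m$ large: given $P,Q$ as above, one selects $P_1,\dots,P_m\in X\cap\Gamma$ with rapidly increasing heights and directions close to those of $P$ and $Q$, chooses weights $d_1\gg d_2\gg\cdots\gg d_m$, and uses a Siegel-lemma argument together with Riemann--Roch to construct a nonzero global section $s$ of $\bigotimes_{i=1}^m \mathrm{pr}_i^*L^{\otimes d_i}$ on $X^m$ vanishing to controlled index at $(P_1,\dots,P_m)$. Comparing a lower bound on the index coming from the construction with an upper bound coming from the arithmetic size of $s$ at the rational point $(P_1,\dots,P_m)$ (via the product formula and Arakelov-type height inequalities) yields a contradiction \emph{unless} certain inner products $\langle P_i,P_j\rangle$ are large, which is what forces the inequality. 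The subtle point -- the reason one cannot simply invoke Roth's or Dyson's lemma -- is that $s$ may vanish identically along a product of subvarieties; controlling this is precisely the role of Faltings' Product Theorem, which bounds the locus of high index by a finite union of products of proper subvarieties of the factors, and it is here that the geometric reductions ($X$ generates $A$ and contains no positive-dimensional translated abelian subvariety outside $Z$) are fed back in. I would close by remarking that this circle of ideas, due to Faltings with refinements of Vojta and simplifications of Bombieri, also adapts -- through Vojta's work on semiabelian varieties -- to the integral-points statement underlying Conjecture~\ref{conj:LV1}.
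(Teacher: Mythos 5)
The paper does not prove this theorem; it states it and cites Faltings' papers \cite{Faltings_ab,Faltings2}, then uses it as a black box (e.g.\ in Corollary~\ref{cor:faltings} and in the sketch of Theorem~\ref{thm:moriwaki}). So there is no ``paper's own proof'' to compare against literally.

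Taken on its own merits, your outline is a faithful high-level account of the Faltings--Vojta--Bombieri argument: Mordell--Weil to get a finitely generated $\Gamma$; the N\'eron--Tate form to get a Euclidean structure on $\Gamma\otimes\R$; the Ueno--Kawamata special locus $Z\subsetneq X$ (with $Z=X$ exactly when $X$ is a translated abelian subvariety, so the hypothesis gives $Z\subsetneq X$); Mumford's gap principle together with Vojta's inequality to bound the points of $(X\smallsetminus Z)\cap\Gamma$ in each spherical cap; and the Product Theorem as the engine behind Vojta's inequality. Two small remarks. First, once you know $Z\subsetneq X$ is closed and $(X\smallsetminus Z)\cap\Gamma$ is finite, non-density of $X\cap\Gamma$ is immediate, so the induction on $\dim X$ is not actually needed for the statement as given (it is needed if one wants the stronger Mordell--Lang structure statement that $X\cap\Gamma$ is a finite union of cosets). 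Second, the phrase ``by the inductive hypothesis it suffices'' glosses over the fact that the components of $Z$ sit in translates of proper abelian subvarieties, so one must quotient and translate before the hypothesis applies; you acknowledge this parenthetically, and it is indeed routine, but worth stating. The genuinely hard content, as you correctly flag, is Vojta's inequality: the Siegel-lemma construction of a small section on $X^m$, the index comparison via the product formula, and the Product Theorem to preclude degenerate vanishing loci. Your sketch of that step is at the level of a road map rather than a proof, which is appropriate given that the paper itself treats the theorem as an external input.
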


In particular, one has the following corollary.

\begin{corollary}[Faltings]\label{cor:faltings} Let $A$ be an abelian variety defined over a number field $K$. If $X$ is a subvariety of $A$ which does not contain any translates of abelian subvarieties of $A$, then $X(K)$ is finite. \end{corollary}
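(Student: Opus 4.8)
The plan is to deduce the corollary from Theorem~\ref{thm:bigfaltings} by induction on $\dim X$, the induction being run uniformly over all number fields. It is convenient to reformulate the statement as follows: for every number field $K$, every abelian variety $A$ over $K$, and every closed subvariety $X\subseteq A$ containing no translate of a \emph{positive-dimensional} abelian subvariety of $A_{\overline{K}}$, the set $X(K)$ is finite. (The hypothesis in the corollary should be read in this way, since every point is a translate of the trivial abelian subvariety.) The base case $\dim X=0$ is immediate, as then $X$ is a finite scheme.

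For the inductive step, assume $\dim X\geq 1$ and that the reformulated statement holds for all subvarieties of smaller dimension over arbitrary number fields. First I would reduce to the geometrically irreducible case: writing $X_{\overline{K}}=\bigcup_i X_i$ with the $X_i$ the geometrically irreducible components, these components are all defined over some finite extension $L/K$, each $X_i$ still contains no translate of a positive-dimensional abelian subvariety of $A_{\overline{L}}=A_{\overline{K}}$ (any such translate would lie in $X_{\overline{K}}$), and $X(K)\subseteq X(L)=\bigcup_i X_i(L)$; so it suffices to bound each $X_i(L)$. Thus I may assume $X$ is geometrically irreducible over $K$ and $\dim X\geq 1$. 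Then $X$ is not a translate of an abelian subvariety of $A_{\overline{K}}$: if it were, it would contain itself as a translate of a positive-dimensional abelian subvariety, contradicting the hypothesis. Hence Theorem~\ref{thm:bigfaltings} applies and $X(K)=X\cap A(K)$ is not Zariski dense in $X$.

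Let $Z$ be the Zariski closure of $X(K)$ in $X$; by the previous paragraph $Z$ is a proper closed subvariety, so every irreducible component $Z_j$ of $Z$ satisfies $\dim Z_j<\dim X$. Each $Z_j$ is contained in $X_{\overline{K}}$, hence contains no translate of a positive-dimensional abelian subvariety of $A_{\overline{K}}$. Passing to a finite extension of $K$ over which all the $Z_j$ are defined, the inductive hypothesis gives that each $Z_j$ has finitely many points over that extension, whence $X(K)\subseteq Z(K)\subseteq\bigcup_j Z_j$ has finitely many $K$-points. This completes the induction, and the corollary follows by applying the reformulated statement to $X$ itself.

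The argument has no genuine arithmetic obstacle beyond Theorem~\ref{thm:bigfaltings} itself; the only thing to watch is the bookkeeping with fields of definition --- geometrically irreducible components of $X$ and of the $Z$'s need not be defined over $K$ --- which is why the induction is phrased over all number fields at once and why one repeatedly uses that the property ``containing no translate of a positive-dimensional abelian subvariety'' is inherited by closed subvarieties and is unaffected by enlarging the base field.
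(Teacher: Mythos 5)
The paper gives no proof of this corollary at all: it is stated with the preface ``In particular, one has the following corollary,'' leaving the deduction from Theorem~\ref{thm:bigfaltings} to the reader. Your dimension induction over all number fields at once is a correct and careful way to supply that deduction from the weak statement quoted in the paper, and your observations that the hypothesis ``no translate of a positive-dimensional abelian subvariety'' is inherited by closed subschemes and is insensitive to base change are exactly the right points to check. A slightly shorter and likely intended route is to invoke the full strength of what Faltings proved in \cite{Faltings_ab,Faltings2}: the Zariski closure of $X(K)$ in $X$ is a \emph{finite union} of translates of abelian subvarieties of $A$. Under the hypothesis that $X$ contains no translate of a positive-dimensional abelian subvariety, every translate occurring in that union must be zero-dimensional, i.e.\ a single point, so $X(K)$ is finite with no induction or field bookkeeping required. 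Your argument has the modest advantage of showing that the weaker ``not Zariski dense'' formulation recorded as Theorem~\ref{thm:bigfaltings} already suffices, at the cost of the induction; both are sound.
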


Using this result, Moriwaki proved the following result, whose generalization is one of the main results in these notes. 

\begin{theorem}\cite{moriwaki}\label{thm:moriwaki} Let $X$ be a projective variety defined over a number field $k$ such that $\Omega^1_X$ is ample and globally generated. Then $X(k)$ is finite. \end{theorem}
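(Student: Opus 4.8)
The plan is to factor $X$ through its Albanese variety and reduce to Faltings' theorem in the form of the Corollary to Theorem~\ref{thm:bigfaltings}. We may assume $X$ is smooth (so that $\Omega^1_X$ is a genuine vector bundle) and that $X(k)\neq\emptyset$ (otherwise there is nothing to prove); fixing a $k$-rational base point gives the Albanese morphism $a\colon X\to A:=\mathrm{Alb}(X)$, defined over $k$. The first step is to observe that global generation of $\Omega^1_X$ forces $a$ to be finite and unramified onto its image. Indeed, $a^{\ast}$ maps the space of translation-invariant $1$-forms $H^0(A,\Omega^1_A)$ isomorphically onto $H^0(X,\Omega^1_X)$, so fibrewise the natural map $a^{\ast}\Omega^1_A\to\Omega^1_X$ is the evaluation of global sections; since $\Omega^1_X$ is globally generated this map is surjective, i.e. $\Omega_{X/A}=0$ and $a$ is unramified. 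As $X$ is projective, $a$ is proper, and a proper unramified morphism is finite. Write $Y:=a(X)\subseteq A$, a geometrically integral closed subvariety with $\dim Y=\dim X=:n$, and $a\colon X\to Y$ finite surjective. (If $n=\dim A$ then $a$ is even étale — finite, unramified, and flat by miracle flatness — so $X$ would be an abelian variety with trivial $\Omega^1_X$, contradicting ampleness; thus $n<\dim A$, although this will not be needed.)

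The second step records that ampleness of the vector bundle $\Omega^1_X$ makes $\omega_X=\det\Omega^1_X$ an ample line bundle (in particular $X$ is of general type, consistent with the picture of the previous sections): for any subvariety $V\subseteq X$ of dimension $v$ one has $(\omega_X)^v\cdot V=\int_V c_1(\Omega^1_X|_V)^v>0$ by the positivity of Chern/Schur classes of ample bundles, so $\omega_X$ is ample by Nakai--Moishezon (Theorem~\ref{thm:NM}). More to the point, $\Omega^1_X|_W$ is ample for \emph{every} closed subvariety $W\subseteq X$.

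The heart of the argument is the third step: $Y$ contains no translate of a positive-dimensional abelian subvariety of $A$ (over $\kbar$). Suppose, for contradiction, that $Z:=y_0+B\subseteq Y$ with $B\subseteq A$ an abelian subvariety and $d:=\dim B\geq 1$. Let $W$ be an irreducible component of $a^{-1}(Z)$, with its reduced structure, that dominates $Z$; since $a|_W$ is finite, $a(W)=Z$ and $\dim W=d$. Now $W\hookrightarrow X$ is a closed immersion, hence unramified, and composing with the unramified morphism $a$ shows $W\to A$ is unramified; as this factors through the closed immersion $Z\hookrightarrow A$, the morphism $h\colon W\to Z$ is itself unramified. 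Because $Z\cong B$ is smooth and $h$ is unramified, $W$ is automatically smooth: at each closed point $w$ one has $\mathfrak m_{h(w)}\mathcal O_{W,w}=\mathfrak m_w$, so $\mathfrak m_w$ is generated by $d=\dim\mathcal O_{W,w}$ elements and $\mathcal O_{W,w}$ is regular. Hence $h$ is a finite unramified map of smooth varieties of the same dimension, so it is étale and $\Omega^1_W=h^{\ast}\Omega^1_Z$ is trivial (the abelian variety $Z$ having trivial cotangent bundle). But the conormal exact sequence of the regular embedding $W\hookrightarrow X$ exhibits $\Omega^1_W$ as a quotient bundle of $\Omega^1_X|_W$, which is ample; so $\Omega^1_W$ would be ample — impossible for a trivial bundle on a variety of positive dimension $d$. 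This contradiction proves the claim.

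Finally, by Faltings' theorem (the Corollary above), $Y(k)$ is finite; since $a\colon X\to Y$ is finite and defined over $k$, the induced map $X(k)\to Y(k)$ has fibres of size at most $\deg a$, so $X(k)$ is finite. The step I expect to be the main obstacle is the third one — excluding abelian-subvariety translates from the image $Y$ — since that is where the positivity hypothesis must be played against the a priori possibly singular geometry of preimages under $a$; the point that makes it go through is the (perhaps not obvious) remark that such a preimage $W$ is forced to be smooth precisely because its image $Z$ is smooth and $a$ is unramified. The remaining ingredients are soft: the Albanese formalism, the elementary fact that the determinant of an ample bundle is ample, and the blackbox input of Faltings' theorem.
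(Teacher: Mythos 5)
Your proof is correct, and while it uses the same high-level ingredients as the paper's sketch (the Albanese morphism, Faltings' theorem for subvarieties of abelian varieties, and the positivity hypotheses on $\Omega^1_X$), the route is genuinely different and arguably cleaner. The paper's sketch first proves a structure theorem: global generation of $\Omega^1_X$ plus Theorem~\ref{thm:bigfaltings} forces every irreducible component of $\overline{X(k)}$ to be (geometrically irreducible and) isomorphic to an abelian variety; it then invokes Proposition~\ref{prop:lazarsfeld} (ample $\Omega^1_X$ $\Rightarrow$ all subvarieties of general type) to rule out abelian subvarieties of $X$, and concludes via Corollary~\ref{cor:faltings}. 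You instead bypass the structure theorem entirely: global generation makes the Albanese $a\colon X\to A$ finite and unramified, and ampleness is used directly on preimages to show that the image $Y=a(X)$ contains no translate of a positive-dimensional abelian subvariety (a purported translate $Z$ would pull back to a smooth $W\subseteq X$ with $\Omega^1_W$ simultaneously a quotient of the ample $\Omega^1_X|_W$ and trivial, which is impossible). Then Corollary~\ref{cor:faltings} applied to $Y$, followed by pulling back along the finite map $a$, gives the result. Two small observations: your second step (ampleness of $\omega_X$ via Schur-class positivity) is not actually needed — the only thing used is that $\Omega^1_X|_W$ is ample for every closed subvariety $W$, which is just Proposition~\ref{prop:subbundle}; and the ``$W$ is automatically smooth because $Z$ is smooth and $h$ is unramified'' point is exactly the kind of observation that makes this direct route work, so you are right to flag it as the crux. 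Overall this is a valid, somewhat more economical alternative that uses only Corollary~\ref{cor:faltings} (the finiteness form of Faltings' theorem) rather than both the density and finiteness forms.
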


\begin{proof}[Sketch of proof] Using Faltings' Theorem \ref{thm:bigfaltings}, and the Albanese variety, one can show that if $X$ is a projective variety with $\Omega^1_X$ globally generated, then every irreducible component of $\overline{X(k)}$ is geometrically irreducible and isomorphic to an abelian variety. We will see in Proposition \ref{prop:lazarsfeld}, that if $\Omega^1_X$ is ample then all subvarieties of $X$ are of general type, and so $X$ does not contain any abelian varieties. Therefore by Corollary \ref{cor:faltings}, the set $X(k)$ is finite.
\end{proof}

For curves $\Omega^1_C \cong \omega_C$, but for higher dimensional varieties $X$, assuming positivity of the vector bundle $\Omega^1_X$ is a stronger condition than assuming positivity of $\omega_X$. In the following section, we will review positivity for vector bundles. 

\subsection{Known cases of the Lang-Vojta conjecture}
In the context of degeneracy of $S$-integral points, as predicted by Conjecture \ref{conj:LV1}, the analogue of Theorem \ref{thm:bigfaltings} is the following result due to Vojta. For the definition of semi-abelian varieties see Definition \ref{def:semiab}.

\begin{theorem}\cite{vojta1, vojta2}\label{thm:vojta}
  Let $X \subset A$ be an irreducible subvariety of a semi-abelian variety $A$ defined over a number field $k$. If $X$ is not a translate of a semi-abelian subvariety, then for every ring of $S$-integers $\calO_{k,S}$, the set of integral points $X(\calO_{k,S})$ is not Zariski dense in $X$.
\end{theorem}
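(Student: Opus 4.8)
The statement to prove is Theorem~\ref{thm:vojta}: if $X \subset A$ is an irreducible subvariety of a semi-abelian variety over a number field $k$, and $X$ is not a translate of a semi-abelian subvariety of $A$, then $X(\calO_{k,S})$ is not Zariski dense in $X$. The plan is to follow the strategy of Faltings' big theorem (Theorem~\ref{thm:bigfaltings}) together with the structural exact sequence for semi-abelian varieties. First I would recall that a semi-abelian variety $A$ sits in an exact sequence $1 \to T \to A \to A_0 \to 0$ with $T \cong \G_m^t$ a torus and $A_0$ an abelian variety, and that $X(\calO_{k,S})$ is a set of integral points with respect to a suitable boundary compactification of $A$. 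The heart of the argument is to reduce the general semi-abelian case to two ingredients that are already available: (i) Faltings' theorem on subvarieties of abelian varieties, for the abelian quotient $A_0$, and (ii) the finiteness of solutions to $S$-unit equations (equivalently, Schmidt's subspace theorem, as used in Example~\ref{ex:P24}) to control the toric directions.

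**Key steps.** I would proceed in roughly the following order. (1) Reduce to the case where $X$ generates $A$ as a group; if the smallest semi-abelian subvariety $B$ containing a translate of $X$ is proper in $A$, replace $A$ by $B$ — this is harmless and lets us assume $X$ is "non-degenerate" in $A$. (2) Push forward to the abelian quotient: let $\pi : A \to A_0$ and set $X_0 = \overline{\pi(X)}$. If $X_0$ is not a translate of an abelian subvariety of $A_0$, then by Theorem~\ref{thm:bigfaltings} the set $X_0(k)$ — and hence $\pi(X(\calO_{k,S}))$, which maps into $X_0(k)$ up to the finitely many places in $S$ — is not Zariski dense, and we pull this back. (3) The remaining case is when $X_0$ \emph{is} a translate of an abelian subvariety; after translating we may assume $\pi(X)$ is (Zariski dense in) a sub-abelian-variety, and then the interesting behaviour of $X$ happens "along the torus directions." Here one uses the compactification of $A$ and studies integral points via the coordinates of the torus $T \cong \G_m^t$: integrality forces these coordinates to be $S$-units, and the fact that $X$ is cut out by nontrivial equations (because $X$ is not a translate of a semi-abelian subvariety) produces nontrivial linear/multiplicative relations among $S$-units, to which one applies the $S$-unit theorem / subspace theorem to get that the solutions lie in a proper closed subset. (4) Assemble: in all cases $X(\calO_{k,S})$ is contained in a proper Zariski-closed subset of $X$, which is the claim. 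One should also remark, as in the abelian case, that this is proved by Vojta in \cite{vojta1, vojta2} using the full strength of his arithmetic Schmidt subspace theorem, and the above is only the conceptual skeleton.

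**Main obstacle.** The genuinely hard part is step (3): controlling the torus directions. When $\pi(X)$ degenerates to an abelian subvariety, all the "new" information in $X$ lives in the fibers of $\pi$, which are torsors under $T$, and one needs a Diophantine input strong enough to handle arbitrary subvarieties of a torus (or of a $T$-torsor) — this is exactly where Schmidt's subspace theorem in its full multivariable, number-field form is unavoidable, and it is the technical core of Vojta's papers. The bookkeeping needed to patch the abelian and toric inputs together along the exact sequence $1 \to T \to A \to A_0 \to 0$ — in particular making the "not a translate of a semi-abelian subvariety" hypothesis descend correctly to the pieces, and keeping track of heights and of the finitely many bad places in $S$ — is also delicate, but it is the subspace-theorem step for the torus that I expect to be the true bottleneck and the reason a self-contained proof is beyond the scope of these notes; accordingly I would cite \cite{vojta1,vojta2} for the complete argument.
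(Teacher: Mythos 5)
The paper does not prove Theorem~\ref{thm:vojta}: it is stated as a black-box citation to \cite{vojta1,vojta2}, in the same way that Faltings' theorems (Theorems~\ref{thm:faltings} and~\ref{thm:bigfaltings}) are quoted without proof. So there is no internal proof for your sketch to be compared against, and citing Vojta for the complete argument, as you do, is exactly what the paper does.

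That said, the skeleton you propose does not actually go through, and it is not the route Vojta takes. Step~(2) is fine: if $\pi(X)$ is not a translate of an abelian subvariety one invokes Theorem~\ref{thm:bigfaltings} on the quotient. The gap is in step~(3). Once $\pi(X)$ degenerates to (a translate of) an abelian subvariety $A_0'$, the fibers of $X \to A_0'$ do sit in $T$-torsors, but the equations cutting out $X$ genuinely mix the toric and abelian directions: as the base point ranges over the potentially Zariski-dense set $A_0'(k)$, the induced conditions on the toric fiber vary with the base point. The $S$-unit theorem and the subspace theorem control unit equations with \emph{fixed} coefficients; applying them fiber by fiber to a family whose coefficients sweep out an infinite set requires a uniform statement, and producing such uniformity is essentially equivalent to reproving the theorem. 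This is not ``delicate bookkeeping'' but the fundamental obstruction. Vojta's actual argument in \cite{vojta1,vojta2} does not separate the abelian quotient from the torus: he adapts Faltings' Diophantine-approximation machinery (the product theorem and the attendant height inequalities from \cite{Faltings_ab,Faltings2}) directly to a compactification of the semi-abelian variety, treating the two directions simultaneously via line bundles on the compactification and dealing with noncompactness by working with integral points and their local heights from the outset. Your instinct that a subspace-theorem-type input is the technical core is accurate in spirit, but it does not enter via a projection-to-the-torus reduction.
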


\begin{corollary}\label{cor:vojta}
  In the above setting, if $X$ does not contain any translate of a semi-abelian subvariety of $A$ then $X(\calO_{D,S})$ is a finite set.
\end{corollary}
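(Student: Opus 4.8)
\textbf{Proof plan for Corollary \ref{cor:vojta}.} The strategy is to deduce the finiteness statement from Theorem \ref{thm:vojta} by a Noetherian induction on the dimension of $X$, using the structure of the special subvarieties (translates of semi-abelian subvarieties) that appear as the potential Zariski-dense pieces. First I would observe that by Theorem \ref{thm:vojta}, the closure $Z := \overline{X(\calO_{D,S})}^{\,\mathrm{Zar}}$ is a proper closed subset of $X$ unless $X$ is itself a translate of a semi-abelian subvariety — but the latter is excluded by hypothesis (a positive-dimensional translate of a semi-abelian subvariety trivially contains a translate of a semi-abelian subvariety, namely itself, and the zero-dimensional case is immediate). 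So $Z \subsetneq X$, and $Z$ has finitely many irreducible components $Z_1,\dots,Z_r$, each of strictly smaller dimension than $X$.

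Next I would apply the hypothesis on $X$ to each component $Z_i$. Since $X$ contains no translate of a positive-dimensional semi-abelian subvariety of $A$, neither does any $Z_i$; in particular no $Z_i$ is itself such a translate. Thus each $Z_i$ again satisfies the hypothesis of Theorem \ref{thm:vojta} (viewing $Z_i \subset A$), so $\overline{Z_i(\calO_{D,S})}^{\,\mathrm{Zar}}$ is a proper closed subset of $Z_i$, and I can iterate. Because $\dim$ strictly decreases at each stage and is bounded below by $0$, after finitely many steps every component reached has dimension $0$; a $0$-dimensional subvariety has only finitely many points, hence contributes only finitely many integral points. Since $X(\calO_{D,S}) = \bigcup_i Z_i(\calO_{D,S})$ and this bottoms out in finitely many finite sets, $X(\calO_{D,S})$ is finite. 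One should be a little careful that the set of integral points does not grow under passing to subvarieties: an integral point of $Z_i$ with respect to the induced model is in particular an integral point of $X$, and conversely an integral point of $X$ lying on $Z_i$ is integral on $Z_i$ for the restricted model, so the bookkeeping is consistent (one may also simply pass to $\calO_{k,S}$-points of the open variety $X\setminus D$ throughout, where the descent to subvarieties is transparent).

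The only genuine subtlety — and the step I would expect to need the most care — is verifying that "contains no translate of a semi-abelian subvariety" is inherited by the irreducible components $Z_i$ and, crucially, that it forbids $Z_i$ from \emph{being} a translate of a semi-abelian subvariety, so that Theorem \ref{thm:vojta} genuinely applies at every stage of the induction. The inheritance of the "contains no translate" property is immediate since $Z_i \subseteq X$. The point that $Z_i$ is not itself a translate of a positive-dimensional semi-abelian subvariety follows because such a translate $B + a$ of course contains the translate $B + a$ of a semi-abelian subvariety, contradicting the inherited property — unless $\dim Z_i = 0$, which is exactly the base case where we stop. Everything else is a routine Noetherian-induction packaging of Theorem \ref{thm:vojta}, with no further Diophantine input required.
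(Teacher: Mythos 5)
Your Noetherian-induction argument is correct, and the bookkeeping points you flag (the hypothesis descends to components, a component can't itself be a positive-dimensional translate, and an $S$-integral point of $X$ lying on $Z_i$ is an $S$-integral point of $Z_i$) are exactly the points that need checking; the paper states the corollary without proof, so there is no explicit argument there to compare against, but your derivation from the non-density form of Theorem \ref{thm:vojta} is the natural one and is sound. It is worth knowing, though, that the form in which Vojta actually proves the theorem is stronger than the non-density statement quoted in the paper: he shows that the Zariski closure of $X(\calO_{k,S})$ is a \emph{finite union of translates of semi-abelian subvarieties of $A$ contained in $X$}. With that structural version in hand the corollary is immediate with no induction at all — the hypothesis forbids any positive-dimensional translate inside $X$, so the finite union consists entirely of points. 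Your route buys self-containedness relative to the exact statement given in the paper, at the modest cost of the dimension-descent argument; the structural route is shorter but requires importing the stronger form of the theorem. Both are legitimate; the paper, in omitting a proof, is implicitly invoking one of these two routine reductions.
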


In a parallel direction, the Lang-Vojta conjecture is known when the divisor $D$ has several components: we discussed one example of this in Example \ref{ex:P24}. Such results arise from the higher dimensional extension of a method developed by Corvaja and Zannier in \cite{CZSiegel} to give a new proof of Siegel's Theorem. In \cite{CZAnnals}, Corvaja and Zannier prove a general result that implies non density of $S$-integral points on surface pairs $(X,D)$ where $D$ has at least two components that satisfy a technical condition on their intersection numbers. This result has been generalized by the same authors, Levin, Autissier et al., extending the method both to higher dimensions as well as refining the conditions on the divisor $D$;  see e.g. \cite{levin, CZ_IMRN, CZ_Toh, CZ_Adv, Autissier} -- we refer to \cite{Corvaja_book,CZ_Book} for surveys of known results.

\section{Hyperbolicity of projective varieties and ample cotangent bundles}\label{sec:hypproj}
The goal of this section is to understand the assumptions in Theorem \ref{thm:moriwaki} -- namely ampleness and global generation for vector bundles. Recall that the definition of ampleness for line bundles was given in Definition \ref{def:lbundles}. 

\begin{definition}\label{def:vbundles} Let $E$ be a vector bundle on a projective variety $X$ and let $H$ be an ample line bundle. We say that 
\begin{itemize}
\item $E$ is \emph{globally generated} if there exists a positive integer $a >0$ and a surjective map\\ $\calO_X^a \to E \to 0$. 
\item $E$ is \emph{ample} if there exists a positive integer $a > 0$ such that the sheaf $\Sym^a(E) \otimes H^{-1}$ is globally generated, and
\item $E$ is \emph{big} if there exists a positive integer $a > 0$ such that the sheaf $\Sym^a(E) \otimes H^{-1}$ is generically globally generated. \end{itemize} \end{definition}

\begin{remark} These definitions are independent of the choice of ample line bundle $H$ (see \cite[Lemma 2.14a]{vie}. \end{remark}

We note that there are alternative ways to describe ampleness and bigness for vector bundles (see \cite{hartshornevb}). 

\begin{prop} A vector bundle $E$ be on a projective variety $X$ is ample if and only if $\calO_{\bP(E)}(1)$ is ample on $\bP(E)$.\end{prop}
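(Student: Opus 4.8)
The plan is to prove both implications using the projection $\pi\colon\bP(E)\to X$ and the dictionary between symmetric powers of $E$ and pushforwards of line bundles on $\bP(E)$. I will fix the convention $\bP(E)=\Proj_X(\Sym E)$, so that $\pi_*\calO_{\bP(E)}(m)=\Sym^m E$, $R^i\pi_*\calO_{\bP(E)}(m)=0$ for $i>0$, $m\ge 0$, and there is a canonical surjection $\pi^*\Sym^m E\twoheadrightarrow\calO_{\bP(E)}(m)$ for $m\ge 1$. Since, by the Remark after Definition \ref{def:vbundles}, neither ampleness of $E$ nor the arguments below depend on the chosen ample line bundle, I would fix one ample $H$ on $X$ throughout.

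For ``$E$ ample $\Rightarrow\calO_{\bP(E)}(1)$ ample'' I would argue as follows. By hypothesis $\Sym^a E\otimes H^{-1}$ is globally generated for some $a>0$, i.e.\ there is a surjection $\calO_X^{\oplus N}\twoheadrightarrow\Sym^a E\otimes H^{-1}$, equivalently $H^{\oplus N}\twoheadrightarrow\Sym^a E$. A surjection of locally free sheaves induces a closed immersion of the associated projective bundles, yielding $\bP(\Sym^a E)\hookrightarrow\bP(H^{\oplus N})\cong X\times\bP^{N-1}$ under which $\calO_{\bP(\Sym^a E)}(1)$ is the restriction of $\mathrm{pr}_1^*H\otimes\mathrm{pr}_2^*\calO_{\bP^{N-1}}(1)$. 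That line bundle is ample on $X\times\bP^{N-1}$ (an external tensor product of ample line bundles is ample), so its restriction to the closed subscheme $\bP(\Sym^a E)$ is ample. The relative $a$-th Veronese embedding $\bP(E)\hookrightarrow\bP(\Sym^a E)$ then pulls $\calO_{\bP(\Sym^a E)}(1)$ back to $\calO_{\bP(E)}(a)$; hence $\calO_{\bP(E)}(a)$, and therefore $\calO_{\bP(E)}(1)$, is ample.

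For the converse I would use Serre vanishing: assuming $\calO_{\bP(E)}(1)$ ample, for any coherent $\sF$ on $X$ one has $H^i(\bP(E),\pi^*\sF\otimes\calO_{\bP(E)}(m))=0$ for $i>0$ and $m\gg 0$, and feeding this through the Leray spectral sequence together with the projection formula and $R^i\pi_*\calO_{\bP(E)}(m)=0$ ($i>0$, $m\ge 0$) gives $H^i(X,\sF\otimes\Sym^m E)=0$ for $i>0$ and $m\gg 0$. It then remains to deduce the global-generation statement of Definition \ref{def:vbundles} from this cohomological vanishing; I would invoke the standard regularity argument for this (as in \cite{hartshornevb}, see also \cite{Lazarsfeld}) rather than redo it.

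I expect the genuine obstacle to be exactly this last deduction in the converse direction. The naive hope — that global generation of $\calO_{\bP(E)}(m)\otimes\pi^*H^{-1}$ on the total space pushes down to global generation of $\Sym^m E\otimes H^{-1}=\pi_*(\calO_{\bP(E)}(m)\otimes\pi^*H^{-1})$ on $X$ — is false: sections generating a sheaf at every point of a fibre need not span the cohomology of its restriction to that fibre (already $x^2$ and $y^2$ generate $\calO_{\bP^1}(2)$ everywhere without spanning $H^0(\bP^1,\calO(2))$). What is actually needed is a vanishing of $H^1$ on $X$ of $\Sym^m E\otimes H^{-1}$ twisted by the ideal sheaf of a point, \emph{uniformly} in the point, which is a Castelnuovo–Mumford regularity input rather than a formal manipulation; keeping the $\bP(E)$ conventions (sub- versus quotient-bundle normalization, and the degree shift under the Veronese) straight is a secondary bookkeeping issue.
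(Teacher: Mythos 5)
The paper offers no proof here: the proposition is stated as a known fact, with the preceding sentence deferring to \cite{hartshornevb}, so there is no in-text argument of the paper's own to compare against. Your proof is the standard one and is correct as far as it goes. The forward direction is complete and self-contained: twisting the global-generation witness by $H$ gives a surjection $H^{\oplus N}\to\Sym^a E$, hence a closed immersion $\bP(\Sym^a E)\hookrightarrow X\times\bP^{N-1}$ under which $\calO_{\bP(\Sym^a E)}(1)$ is the restriction of the external tensor product $\mathrm{pr}_1^*H\otimes\mathrm{pr}_2^*\calO_{\bP^{N-1}}(1)$ of amples; the relative degree-$a$ Veronese then converts this to ampleness of $\calO_{\bP(E)}(a)$ and hence of $\calO_{\bP(E)}(1)$. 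For the converse you set up the Leray and projection-formula reduction correctly, and, more importantly, you correctly reject the tempting shortcut that global generation of $\calO_{\bP(E)}(m)\otimes\pi^*H^{-1}$ upstairs formally pushes down to global generation of $\Sym^m E\otimes H^{-1}$ downstairs; your $x^2$, $y^2$ counterexample is exactly the right one. The step you defer to a reference --- producing an $m_0$, \emph{uniform} in $x\in X$, so that for $m\ge m_0$ the restriction $H^0(\bP(E),\pi^*\sF\otimes\calO(m))\to H^0\bigl(\bP^{r-1}_x,\pi^*\sF\otimes\calO(m)|_{\bP^{r-1}_x}\bigr)$ is surjective, via vanishing of $H^1$ against the ideal sheaf $\pi^*\fm_x$ --- is indeed where the content of that direction lives, and it is precisely what the Castelnuovo--Mumford-type argument in \cite{hartshornevb} supplies. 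Your diagnosis of the obstacle is accurate; just be aware that the converse direction as written is not self-contained, which is acceptable here given that the paper itself simply cites the result.
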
 

\begin{remark} One can try to make the above definition for big, namely that $E$ is big if and only if $\calO_{\bP(E)}(1)$ is big, but this definition \emph{does not} always coincide with the above definition (see Example \ref{ex:big}). We will call $E$ \emph{weakly big} if $\calO_{\bP(E)}(1)$ is big to distinguish the two notions. \end{remark}

\begin{example}\label{ex:big}The vector bundle $E = \calO_{\bP^1} \oplus \calO_{\bP^1}(1)$ is weakly big, but not big as in Definition \ref{def:vbundles}. This is because any symmetric power will have a $\calO_{\bP^1}$ summand, which will become negative when tensoring with $H^{-1}$. In particular, it will never be generically globally generated. The fact that $E$ is weakly big follows from the following calculation (or see Remark \ref{rmk:pullback}). The $n$th symmetric power is $\Sym^n(E) = \calO_{\bP^2} + \calO_{\bP^1}(1) + \dots + \calO_{\bP^1}(n)$ so $h^0(\Sym^n(E)) = 1 + 2 + \dots + (n+1) = cn^2 + \dots$, and therefore grows like a degree 2 polynomial. If $X = \bP(E)$ then $X = F_1$, the Hirzebruch surface. Consider the natural map $f: F_1 \to \bP^1$ then $f_*(\calO(n)) = \Sym^n(E)$, and so $h^0(F_1, \calO(n)) = h^0(\bP^1, \Sym^n(E))$. \end{example}

We will need the following fact repeatedly:

\begin{prop}\cite[Proposition 2.2 \& 4.1]{hartshornevb}\label{prop:subbundle} Any quotient of an ample vector bundle is ample. The restriction of an ample vector bundle is ample. \end{prop}

\begin{proof}[Sketch of Proof of \ref{prop:subbundle}] We show the quotient result, the other result is similar. If $A \to B$ is a surjective map of vector bundles, then $\Sym^n(A) \otimes F \to \Sym^n(B) \otimes F$ is surjective. So if the former is globally generated, so is the latter. \end{proof}

\begin{example} Let $X = \bP^1$. Recall that any vector bundle of rank $r$ on $\bP^1$ can be decomposed as the sum of $r$ line bundles. Then $\calO_X \oplus \calO_X$ is globally generated (but not ample nor big). The vector bundle $\calO_X(1) \oplus \calO_X(1)$ is ample. \end{example}

\begin{example} Let $X = \bP^n$. Then $T_X$ is ample by the Euler sequence combined with Proposition \ref{prop:subbundle}, and $T_X(-1)$ is globally generated, but neither ample nor big. To see it is globally generated, note that tensoring the Euler sequence with $\calO_X(-1)$, we obtain

$$\calO_X(-1) \to \calO_X^{\otimes (n+1)} \to T_X(-1) \to 0.$$

The fact that it is not ample follows since the restriction of $T_X(-1)$ to a line $l \subset \bP^n$ is $\calO_l(1) \oplus \calO_l^{\otimes (n+1)}$. One can also show that $T_{\bP^n}(-1)$ is not big (see \cite[Remark 2.4]{payne}). 
\end{example}

We are now ready to prove the main result in this section.

\begin{prop}\cite[Example 6.3.28]{lazarsfeld2}\label{prop:lazarsfeld} Let $X$ be a smooth projective variety with ample cotangent bundle $\Omega^1_X$. Then all irreducible subvarieties of $X$ are of general type. \end{prop}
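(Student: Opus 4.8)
The plan is to reduce the statement to two ingredients: the fact (Proposition~\ref{prop:subbundle}) that the restriction of an ample vector bundle to a subvariety is again ample, and a general principle that a smooth projective variety whose cotangent bundle is big is automatically of general type. First I would handle the smooth case. Let $Y \subset X$ be an irreducible subvariety, and let $\nu\colon \widetilde Y \to Y$ be a resolution of singularities (so we may assume $\widetilde Y$ is smooth projective of dimension $d = \dim Y$). The key observation is that there is a generically surjective sheaf map $\nu^*(\Omega^1_X|_Y) \to \Omega^1_{\widetilde Y}$, coming from the differential of the composite $\widetilde Y \to Y \hookrightarrow X$; this map is an isomorphism over the smooth locus of $Y$ away from the exceptional locus, so it is generically surjective. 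Since $\Omega^1_X$ is ample, $\Omega^1_X|_Y$ is ample by Proposition~\ref{prop:subbundle}, hence $\nu^*(\Omega^1_X|_Y)$ is ample on $\widetilde Y$ (pullback of an ample bundle under a generically finite—indeed birational—morphism is big; more carefully one should pull back to a subvariety or use that the restriction stays ample on the non-exceptional part). The upshot I want is that $\Omega^1_{\widetilde Y}$ is a big vector bundle, being a generic quotient of a big (indeed ample) bundle.

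Next I would convert bigness of $\Omega^1_{\widetilde Y}$ into bigness of $\omega_{\widetilde Y} = \det \Omega^1_{\widetilde Y} = \bigwedge^d \Omega^1_{\widetilde Y}$. The standard fact here is that if $E$ is a big vector bundle of rank $d$ on a projective variety $Z$ of dimension $d$, then $\det E$ is a big line bundle; equivalently, $\mathcal O_{\PP(E)}(1)$ big plus a dimension count on the Segre class / top symmetric power forces $h^0(Z, (\det E)^{\otimes m})$ to grow like $m^d$. One clean way: some $\Sym^a E \otimes H^{-1}$ is generically globally generated, and taking enough sections and wedging them appropriately produces many sections of a large power of $\det E$ twisted down by $H$; alternatively one invokes that $\mathcal O_{\PP(E)}(1)$ big implies, via the relative Euler sequence and $\det$ of the tautological quotient, that $\det E$ is big on the base. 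Applying this with $E = \Omega^1_{\widetilde Y}$ gives that $\omega_{\widetilde Y}$ is big, i.e. $\widetilde Y$ is of general type, and since general type is a birational invariant, $Y$ is of general type. This is exactly the definition recalled in the excerpt.

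The main obstacle I expect is the passage through the resolution $\nu$: I need to be careful that pulling back the ample bundle $\Omega^1_X|_Y$ along the birational morphism $\nu$, and then composing with the generically-surjective map to $\Omega^1_{\widetilde Y}$, genuinely yields a \emph{big} bundle and not something that degenerates along the exceptional locus. The safe route is to argue on a dense open $U \subset \widetilde Y$ over which $\nu$ is an isomorphism onto $U' \subset Y^{\mathrm{sm}}$: there $\Omega^1_{\widetilde Y}|_U \cong \Omega^1_X|_{U'}$ is (the restriction of) an ample bundle, and a vector bundle on a projective variety that is ample on a dense open set — more precisely, a quotient of an ample bundle off a proper closed subset — is big in the sense of Definition~\ref{def:vbundles}, because generic global generation of $\Sym^a(\Omega^1_X)\otimes H^{-1}$ pulls back to generic global generation on $\widetilde Y$. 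So really the cleanest statement to isolate and prove as a lemma is: \emph{if $f\colon Z' \to Z$ is a surjective morphism of smooth projective varieties of the same dimension and $E$ is a big (e.g. ample) vector bundle on $Z$ admitting a generically surjective map $f^*E \to \Omega^1_{Z'}$, then $Z'$ is of general type.} Once that lemma is in place, Proposition~\ref{prop:lazarsfeld} follows immediately by taking $Z = X$ restricted to $Y$ (after first restricting, using Proposition~\ref{prop:subbundle}) and $Z' = \widetilde Y$. I would also remark that when $Y$ is already smooth the argument is transparent: $\Omega^1_{\widetilde Y} = \Omega^1_Y$ is a quotient of the ample $\Omega^1_X|_Y$ via the conormal sequence, hence ample, hence $\omega_Y = \det \Omega^1_Y$ is ample and in particular big.
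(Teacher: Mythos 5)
Your proof is correct in outline and does reach the right conclusion, but it follows a slightly different route than the paper. The paper's one-line argument takes the $d$-th exterior power on $X$ \emph{first}: since $\Omega^1_X$ is ample so is $\Omega^d_X = \bigwedge^d\Omega^1_X$ (exterior powers of ample bundles are ample in characteristic zero), its restriction to $Y_0$ is ample, its pullback under the resolution $\mu\colon Y\to Y_0$ is big by Remark~\ref{rmk:pullback}, and the natural generically surjective map $\mu^*\Omega^d_X \to \Omega^d_Y = \calO_Y(K_Y)$ immediately exhibits $K_Y$ as a generic line-bundle quotient of a big sheaf, hence big. You instead wedge \emph{last}: you show $\Omega^1_{\widetilde Y}$ is big as a rank-$d$ bundle, and then invoke a lemma that the determinant of a big bundle is big. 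That lemma is true and not hard with the paper's Definition~\ref{def:vbundles}, but it deserves to be stated cleanly: if $\Sym^a E\otimes H^{-1}$ is generically globally generated of rank $r$, wedge $r$ generating sections to get a nonzero section of $\det(\Sym^a E\otimes H^{-1}) = (\det E)^{\otimes c}\otimes H^{-r}$ (with $c = \binom{a+d-1}{d}$); thus a power of $\det E$ is ample plus effective, so $\det E$ is big by Kodaira's lemma. Your sketch (``taking enough sections and wedging them'') is pointing at exactly this, but you should spell it out since your route turns on it; also note that the ``rank equals dimension'' hypothesis you impose on the lemma is not actually used. The paper's reorganization avoids that lemma at the cost of invoking the (also nontrivial, Hartshorne-type) fact that exterior powers of ample bundles are ample; the two approaches are of comparable depth, and your handling of the passage through the resolution---working on a dense open where $\nu$ is an isomorphism and using that generic global generation is a generic condition---is the right way to make the bigness of the pullback precise.
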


\begin{proof}Let $Y_0 \subset X$ be an irreducible subvariety of dimension $d$, and let $\mu: Y \to Y_0$ be a resolution of singularities. Then there is a generically surjective homomorphism $\mu^* \Omega_X^d \to \Omega^d_Y = \calO_Y(K_Y)$. Since $\Omega_X^d$ is ample, the pullback $\mu^*\Omega_X^d$ is big (see Remark \ref{rmk:pullback}) and thus $\calO_Y(K_Y)$ is also big. \end{proof}

\begin{remark}\leavevmode
\begin{enumerate}
\item It is worth noting that the converse is not true, that is there are hyperbolic varieties $X$ such that the cotangent bundle $\Omega^1_X$ is not ample, see Example \ref{ex:nonample}. 
\item In general, it is not so easy to find varieties $X$ with ample cotangent bundle $\Omega^1_X$ (see \cite[Section 6.3.B]{lazarsfeld2}).
\end{enumerate}
\end{remark}

We saw in Theorem \ref{thm:moriwaki}, that if we also assume that $\Omega^1_X$ is \emph{globally generated}, we can obtain finiteness of integral points (unconditionally with respect to Lang's conjecture). Finally, although we will not prove it, we recall that Kobayashi proved (see \cite[Theorem 6.3.26]{lazarsfeld2}) that if $X$ is a smooth projective variety with ample cotangent bundle $\Omega^1_X$, then $X$ is Brody hyperbolic (see also \cite[Proposition 3.1]{Demailly1997}). Again, there are examples of Brody hyperbolic varieties for which $\Omega^1_X$ is not ample (see Example \ref{ex:nonample}).

\begin{example}\cite[Remark 6.3.27]{lazarsfeld2}\label{ex:nonample} Let $B$ be a curve of genus $g(B) \geq 2$ and consider the variety $X = B \times B$. Then $X$ is hyperbolic but $\Omega^1_X$ is \emph{not} ample as its restriction to $Y = B \times \{\rm{pt}\}$ has a trivial quotient. On the other hand, consider a holomorphic map $\bC \to X = B \times B$. Since $B$ is hyperbolic, the map cannot be contained in a fiber. Consider the composition $\bC \to B \times B \to B$. This is a holomorphic map to a curve of genus $\geq 2$ and therefore by Liouville's theorem must be constant.  \end{example}

\begin{example}\cite[Construction 6.3.37]{lazarsfeld2}\leavevmode
\begin{enumerate}
\item Let $X_1, X_2$ be smooth projective surfaces (over $\bC$) of general type with $c_1(X_i)^2 > 2c_2(X_i)$. Then a complete intersection of two general sufficiently positive divisors in $X_1 \times X_2$ is a surface $X$ with $\Omega^1_X$ ample. 
\item Let $f: X \to B$ be a non-isotrivial family of smooth projective curves of genus $g \geq 3$ over a smooth curve of genus $g(B) \geq 2$. Then $\Omega^1_X$ is ample. These are called Kodaira surfaces.  
\end{enumerate}\end{example}

\begin{example}\cite[Example 6.3.38]{lazarsfeld2} Let $Y_1, \dots, Y_m$ be smooth projective varieties of dimension $d \geq 1$ with big cotangent bundle (e.g. if $Y_i$ are surfaces of general type with $c_1(Y)^2 > c_2(Y)$ ) and let $$X \subseteq Y_1 \times \dots \times Y_m$$ be a general complete intersection of sufficiently high multiples of an ample divisor. Then if 
$$ \dim X \leq \frac{d(m+1)+1}{2(d+1)},$$ then $X$ has ample cotangent bundle $\Omega^1_X$.
\end{example}

\begin{example}\cite{debarre} If $X$ is the complete intersection of $e \geq n$ 
sufficiently ample general divisors in a simple abelian variety of dimension
$n + e$, then the cotangent bundle $\Omega^1_X$ is ample. \end{example}

Debarre conjectured that if $X \subset \bP^r$ is the complete intersection of $e \geq r/2$ hypersurfaces of sufficiently high degree, then the cotangent bundle $\Omega^1_X$ is ample \cite{debarre}. This conjecture is now a theorem of Brotbek and Darondeau \cite{brotbekinventiones} and independently, Xie \cite{xie}. We state one of the related results. 

\begin{theorem}\cite{brotbekinventiones,xie} In every smooth projective variety $M$ for each $n \leq \dim(M)/2$, there exist smooth subvarieties of dimenson $n$ with ample cotangent bundles. \end{theorem}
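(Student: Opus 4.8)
The plan is to realize the desired subvarieties as \emph{generic complete intersections of hypersurfaces of large degree} in $M$, and to establish ampleness of their cotangent bundle by producing enough symmetric differential forms to cut out the empty set on the projectivised cotangent bundle. First I would fix an ample line bundle $A$ on $M$ and write $N=\dim M$; given $n\le N/2$, set the codimension $c=N-n$, so that $c\ge n$. For $d\gg 0$ and $s_1,\dots,s_c\in H^0(M,A^{\otimes d})$ general, Bertini's theorem makes $X=\{s_1=\dots=s_c=0\}$ a smooth subvariety of $M$ of dimension $n$, with conormal sequence
\[ 0\longrightarrow \bigoplus_{i=1}^{c}\OO_X(-d)\longrightarrow \Omega^1_M\big|_X\longrightarrow \Omega^1_X\longrightarrow 0. \]
It then suffices to show that, for a suitable choice of $(d;s_1,\dots,s_c)$, the bundle $\Omega^1_X$ is ample; equivalently, by the characterisation of ampleness for vector bundles recalled above, that $\OO_{\PP(\Omega^1_X)}(1)$ is ample on $P:=\PP(\Omega^1_X)$. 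I stress that since $\Omega^1_M|_X$ is in general far from ample, this cannot follow formally from the displayed sequence: one has to exploit the precise shape of the defining equations $s_i$.

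The heart of the argument is the construction of symmetric differentials on $X$ out of the equations. I would take the $s_i$ inside a family of ``Fermat type'', say $s_i=\sum_j a_{ij}\,t_j^{\otimes d}$ for fixed sections $t_j\in H^0(M,A)$ and varying coefficients $a_{ij}$. On $X$ one has $s_i\equiv 0$, so differentiating these relations and combining them via Wronskian-type operators yields holomorphic sections of $\Sym^m\Omega^1_X$ twisted by negative powers of $A$ --- this is the jet-differential mechanism (sections of $\Sym^m\Omega^1_M(\log\sum H_i)$, restricted to $X$ and corrected along it, become honest symmetric differentials because the log poles lie on $X$). The point of building them concretely is to control their zeros: the goal is that for every $x\in X$ and every nonzero tangent vector $v\in T_{X,x}$, at least one constructed form $\omega$ satisfies $\omega_x(v^{\otimes m})\ne 0$, which is exactly emptiness of the common base locus in $P$. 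Absorbing the twists and making this possible is what forces $d$, and the order $m$ of the differentials, to be large.

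The main obstacle --- the technical core of the whole proof --- is to show that this common base locus is actually empty for a \emph{generic} choice of the coefficient matrix $(a_{ij})$. The strategy I would follow is the ``moving coefficients method'': form the parameter space $T$ of coefficient matrices, the universal complete intersection $\mathcal X\to T$, and the relative $\PP(\Omega^1_{\mathcal X/T})$; the constructed differentials vary algebraically with $T$, and one must prove that the locus where they all vanish simultaneously does not dominate $T$. This I would in turn reduce to a dimension count together with an explicit computation at carefully chosen degenerate members of the family, where the relevant linear algebra among the $t_j^{\otimes d}$ and their differentials is transparent, concluding by semicontinuity. Since ampleness of a line bundle is an open condition in families, one then obtains that $\OO_{\PP(\Omega^1_X)}(1)$ is ample --- hence $\Omega^1_X$ is ample --- for $(a_{ij})$ in a dense open subset of $T$ and $d\gg 0$; this produces a smooth $n$-dimensional $X\subset M$, and letting $n$ range over all integers $\le \dim M/2$ finishes the proof.

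For orientation: the special case where $M$ is a simple abelian variety is Example~\ref{debarre}, where the construction of differentials is easy because $\Omega^1_M$ is already trivial, so one only needs the base-locus estimate; the general case replaces that triviality by the explicit Fermat-type equations, which is precisely why the ``moving coefficients'' bookkeeping becomes delicate. (An alternative route, due to Xie, first reduces an arbitrary ambient $M$ to projective space by intersecting with hypersurfaces and then runs an induction on dimension, being completely explicit in low dimension.) I fully expect the base-locus emptiness step to be the hard part: producing \emph{some} symmetric differentials is soft --- it is forcing them to have no common zero on the \emph{entire} projectivised cotangent bundle, uniformly in $x$ and in all tangent directions, that requires the genericity machinery.
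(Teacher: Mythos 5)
The paper gives no proof of this statement: it is cited as a theorem of Brotbek--Darondeau \cite{brotbekinventiones} and, independently, of Xie \cite{xie}, with the ensuing remark only contrasting the (non-)effectiveness of their bounds. So there is no in-paper argument against which to check you. That said, your outline is a fair high-level account of the Brotbek--Darondeau strategy: Fermat-type complete intersections inside $M$, Wronskian/Cramer-type constructions producing holomorphic symmetric differentials on $X$ with a negative twist, the moving-coefficients method on the relative projectivised cotangent bundle of the universal complete intersection to prove that the common base locus does not dominate the parameter space, and then openness of ampleness in families to conclude for a generic member. You correctly identify the base-locus argument as the crux, correctly note that the conormal sequence alone cannot give the result since $\Omega^1_M|_X$ is far from ample, correctly situate the construction relative to Debarre's simple-abelian-variety case (where $\Omega^1_M$ is trivial, making the differential-form side easy), and correctly flag Xie's alternative via reduction to $\PP^N$ and induction on dimension. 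One small imprecision: the pole-removal mechanism is not that ``the log poles lie on $X$'' --- rather, the Wronskian-type determinants built from the $s_i$ and their derivatives carry exactly the right twist to cancel the poles along the coordinate hyperplanes $\{t_j=0\}$; but this is a detail of bookkeeping, not a flaw in the strategy. As a sketch of a genuinely difficult theorem whose heavy lifting is the dimension count and explicit computation at degenerate members, this captures what needs to happen.
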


\begin{remark}Brotbek-Darondeau prove Debarre's conjecture without providing effective bounds. Xie provides effective bounds, and the work of Deng \cite{deng1} improves these bounds. Work of Coskun-Riedl improves the bound in many cases \cite{coskunriedl}. 
\end{remark}

In the next section, we shift our focus to quasi-projective varieties.

\section{Hyperbolicity of log pairs}\label{sec:hyplog}
Using the ideas introduced in the above section, we now wish to understand positivity conditions on a pair $(X,D)$ that will guarantee hyperbolicity. We first define what hyperbolicity means for quasi-projective varieties. 

\begin{definition}\label{def:loghyp} Let $X$ be a projective geometrically integral variety over a field $k$, let $D$ be a normal crossings divisor on $X$, and let $V = X \setminus D$. 
\begin{itemize}
\item $V = (X \setminus D)$ is \emph{arithmetically hyperbolic} if $V(\calO_D)$ is finite. 
\item $V_\bC$ is \emph{Brody hyperbolic} if every holomorphic map $\bC \to V_\bC$ is constant.
\end{itemize} \end{definition}

Then the conjectures in the spirit of Green-Griffiths-Lang-Vojta assert that the above are equivalent, and are additionally equivalent to all subvarieties of $V$ being of log general type. Recall that aside from the canonical sheaf, the main player to study hyperbolicity was the cotangent sheaf $\Omega^1_X$. We now consider the generalization to pairs.

\begin{definition} Let $X$ be a smooth projective variety and let $D = \sum_{j=1}^r D_j$ be a reduced normal crossings divisor on $X$. The \emph{log cotangent bundle} $\Omega^1_X(\log D)$ denotes the sheaf of differential forms on $X$ with logarithmic poles along $D$. \end{definition}

For example, if $\dim X = n$ and $U \subset X$ is an open set such that $D|_U = z_1z_2\cdots z_k = 0$ (with $k < n$) then $$H^0(U, \Omega^1_X(\log D)) = \operatorname{Span}\{ \frac{dz_1}{z_1}, \dots, \frac{dz_k}{z_k}, dz_{k+1}, \dots, dz_n \}.$$

The natural idea would be to ask whether or not ampleness of the log cotangent bundle $\Omega^1_X(\log D)$ implies the desired hyperbolicity properties. It turns out that the log cotangent bundle $\Omega^1_X(\log D)$ is \emph{never} ample. Indeed, there are non-ample quotients coming from $D$ which violate the quotient property from Proposition \ref{prop:subbundle}.

\begin{prop}\cite{advt,brotbek} Let $X$ be a smooth variety of $\dim X > 1$ and $D \neq \emptyset$ a normal crossings divisor on $X$. Then the log cotangent sheaf $\Omega^1_X(\log D)$ is never ample. \end{prop}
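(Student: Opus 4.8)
The plan is to exhibit, near a point of $D$, an explicit quotient bundle of $\Omega^1_X(\log D)$ that fails to be ample, and then invoke Proposition~\ref{prop:subbundle}, which says that every quotient of an ample vector bundle is ample. Since ampleness is a statement that can be tested on any subvariety (again by Proposition~\ref{prop:subbundle}, the restriction of an ample bundle is ample), it suffices to restrict attention to one component $D_1$ of $D$ and find a non-ample quotient of $\Omega^1_X(\log D)|_{D_1}$.

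Concretely, I would work in a local analytic chart $U$ adapted to the normal crossings divisor, where $D|_U = \{z_1 \cdots z_k = 0\}$ with $k \geq 1$, so that $\Omega^1_X(\log D)|_U$ is free on $\frac{dz_1}{z_1},\dots,\frac{dz_k}{z_k}, dz_{k+1},\dots,dz_n$. The key geometric observation is the residue exact sequence: restricting along $D_1 = \{z_1 = 0\}$, one has a surjection
\[
\Omega^1_X(\log D)\big|_{D_1} \longrightarrow \Omega^1_X(\log D)/\Omega^1_X(\log(D - D_1)) \cong \calO_{D_1}
\]
given by taking the residue along $D_1$; globally this is the Poincar\'e residue map, and its kernel is $\Omega^1_{D_1}(\log (D-D_1)|_{D_1})$, so we obtain $\Omega^1_X(\log D)|_{D_1}$ as an extension with trivial quotient $\calO_{D_1}$. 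If $\Omega^1_X(\log D)$ were ample, then by Proposition~\ref{prop:subbundle} its restriction to the positive-dimensional variety $D_1$ would be ample, and then the quotient $\calO_{D_1}$ would be ample — but a trivial line bundle on a positive-dimensional projective variety is never ample (it has no positive power with a section vanishing anywhere, or: its top self-intersection against $D_1$ is zero, contradicting Nakai--Moishezon, Theorem~\ref{thm:NM}). This is the contradiction. Here is where the hypothesis $\dim X > 1$ is used: we need $\dim D_1 = \dim X - 1 \geq 1$ so that triviality genuinely obstructs ampleness; and $D \neq \emptyset$ is used to guarantee that such a component $D_1$ exists.

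The one technical point that needs care — and the step I expect to be the main (mild) obstacle — is justifying the global residue sequence
\[
0 \longrightarrow \Omega^1_{D_1}\big(\log (D-D_1)|_{D_1}\big) \longrightarrow \Omega^1_X(\log D)\big|_{D_1} \longrightarrow \calO_{D_1} \longrightarrow 0,
\]
rather than just its local model; this is standard (it is the classical residue/adjunction sequence for logarithmic differentials along a smooth component of a normal crossings divisor, which one checks is well-defined and $\calO$-linear by comparing the local generators $\frac{dz_1}{z_1} \mapsto 1$, all other generators $\mapsto 0$ on overlaps), so I would cite it or sketch the gluing rather than belabor it. An alternative, even more elementary route avoids naming the kernel entirely: just note that $\frac{dz_1}{z_1}$ generates a trivial sub-line-bundle pattern via residues, i.e. the residue map $\Omega^1_X(\log D)|_{D_1} \to \calO_{D_1}$ is a well-defined surjection of $\calO_{D_1}$-modules with trivial target, and conclude as above. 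Either way the proof is short: find the trivial quotient on a positive-dimensional subvariety, then apply Proposition~\ref{prop:subbundle} together with the fact that $\calO$ is not ample in positive dimension.
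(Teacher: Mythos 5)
Your proof is correct and rests on the same core idea as the paper: exhibit $\calO_{D_1}$ as a quotient of $\Omega^1_X(\log D)|_{D_1}$ on a positive-dimensional component $D_1$, then apply Proposition~\ref{prop:subbundle} twice (restriction of ample is ample; quotient of ample is ample) and the fact that the trivial bundle on a positive-dimensional projective variety is not ample. The packaging differs, though. The paper starts from the global residue sequence $0 \to \Omega^1_X \to \Omega^1_X(\log D) \to \bigoplus_j \calO_{D_j} \to 0$ on $X$ and then tensors with $\calO_{D_i}$; this produces the trivial summand $\calO_{D_i}$ \emph{together with} a torsion sheaf $Q = \bigoplus_{j \neq i} \calO_{D_i \cap D_j}$, and one concludes by noting that $\calO_{D_i} \oplus Q$ (hence its direct summand $\calO_{D_i}$) cannot be a quotient of an ample bundle. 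Your route instead uses the single-component adjunction/residue sequence $0 \to \Omega^1_{D_1}(\log(D-D_1)|_{D_1}) \to \Omega^1_X(\log D)|_{D_1} \to \calO_{D_1} \to 0$, which hands you the trivial quotient directly with no torsion to discard; it is slightly tidier, at the mild cost of needing the fact that this sequence glues globally along the component, which you correctly flag as the one point requiring justification (it is standard, and the local residue computation $\frac{dz_1}{z_1} \mapsto 1$ does glue). Either packaging yields the same contradiction, and your use of the hypotheses $\dim X > 1$ (so $\dim D_1 \geq 1$) and $D \neq \emptyset$ (so $D_1$ exists) matches the paper exactly.
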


\begin{proof} Suppose that the log cotangent bundle $\Omega^1_X(\log D)$ were ample. Consider the following exact sequence (see \cite[Proposition 2.3]{ev}):
$$0 \to \Omega^1_X \to \Omega^1_X(\log D) \to \oplus_{j = 1}^r D_j \to 0, $$
where $D_j$ are the components of $D$. Now consider the restriction of this sequence to a component $D_i \subseteq D$, and tensor with $\calO_{D_i}$. 

In this way one obtains a surjection $$A \to \calO_{D_i} + Q \to 0,$$ where $A$ is an ample sheaf (being the restriction of an ample sheaf), and $Q$ is a torsion sheaf supported at $D_i \cap D_j$, whenever $D_i \cap D_j \neq \emptyset$. We note that if there is no common point between any two irreducible components of $D$, in particular if $D$ is irreducible, then the second term of the sequence will just be $\calO_{D_i} $. Then, since $\calO_{D_i} \oplus Q$ (and in particular $\calO_{D_i}$) is \emph{not} ample, there cannot be a surjection from an ample sheaf (this would violate Proposition \ref{prop:subbundle}) and thus the log cotangent bundle $\Omega^1_X(\log D)$ cannot be ample. \end{proof}

Instead, we can ask what happens if the log cotangent bundle $\Omega^1_X(\log D)$ is, in a sense, as ample as possible. Before introducing our notion of \emph{almost ample}, we recall the definition of the augmented base locus. Let $\mathrm{Bs}(D)$ denote the base locus of $D$. 

\begin{definition}  The \emph{stable base locus} of a line bundle $L$ on a projective variety $X$ is the Zariski closed subset defined as:
$$\textbf{B}(L) := \displaystyle\bigcap_{m \in \mathbb{N}} \mathrm{Bs}(mL), $$
and the \emph{augmented base locus} (aka non-ample locus) of $L$ is:
$$\textbf{B}_+(L) := \displaystyle\bigcap_{m \in \mathbb{N}} \textbf{B}(mL -A),$$ where $A$ is any ample line bundle on $X$.  \end{definition}

\begin{remark}If $E$ is a vector bundle on $X$ we define the augmented base locus
as $\pi(\mathbf{B}_+(\calO(1)_{\bP(E)})$ where $\pi: \bP(E) \to X$. Note that $\textbf{B}_+(L)$ is empty if and only if $L$ is ample, and that $\textbf{B}_+(L) \neq X$ if and only if $L$ is big (see \cite[Example 1.7]{lazetal}. \end{remark}

\begin{example}[Nef and big divisors] Let $L$ be a big and nef divisor on $X$. We can define the \emph{null locus} $\rm{Null}(L) \subseteq X$ be the union of all positive dimensional subvarieties $V \subseteq X$ with $(L^{\dim V}\cdot V) = 0$. Then this is a proper algebraic subset of $X$ (see \cite[Lemma 10.3.6]{lazarsfeld2}), and a theorem of Nakamaye (see \cite[Theorem 10.3.5]{lazarsfeld2}) says that \[\textbf{B}_+(L)  = \rm{Null}(L).\] \end{example}

\begin{example}[Surfaces]
If $X$ is a smooth surface and $D$ is a big divisor on $X$, then there exists a \emph{Zariski decomposition} $D = P+N$, where $P$ is the \emph{nef} part, and $N$ is the \emph{negative} part. In this case, one can prove (see \cite[Example 1.11]{lazetal}) that \[\textbf{B}_+(D) = \rm{Null}(P).\]
\end{example}

\begin{theorem}\cite{baselocus} Let $\calL$ be a big line bundle on a normal projective variety $X$. The complement of the augmented base locus is the largest Zariski open set such that the morphism $\phi_m(X, L)$ is an isomorphism onto its image.\end{theorem}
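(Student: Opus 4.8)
The plan is to prove two inclusions, after a harmless normalization. First I would replace $L$ by a fixed positive multiple so that it is an honest Cartier divisor class — this changes neither $\textbf{B}_+(L)$ nor the content of the statement, which is asymptotic in $m$ — fix an ample divisor $A$ on $X$, and record the standard stabilization fact (see \cite{baselocus}, \cite{lazetal}) that $\textbf{B}_+(L)=\textbf{B}(L-\epsilon A)$ for every sufficiently small rational $\epsilon>0$. Write $U=X\setminus\textbf{B}_+(L)$. The theorem then amounts to: (a) for all sufficiently large and divisible $m$, $\phi_{|mL|}$ is defined on $U$ and restricts to a locally closed embedding there; and (b) any open $V$ on which some $\phi_{|mL|}$ is an isomorphism onto its image satisfies $V\subseteq U$.

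For (a): fix such an $\epsilon$; clearing denominators and using Noetherianity to stabilize base loci, choose $m$ divisible enough that $D_m:=mL-m\epsilon A$ satisfies $\mathrm{Bs}(|D_m|)=\textbf{B}_+(L)$, realized by finitely many sections $s_1,\dots,s_N\in H^0(X,D_m)$, and that $m\epsilon A$ is very ample with basis $t_0,\dots,t_r\in H^0(X,m\epsilon A)$ defining an embedding $\iota\colon X\hookrightarrow\bP^r$. The products $s_it_j$ lie in $H^0(X,mL)$ and have no common zero on $U$ (the $s_i$ already have common zero locus exactly $\textbf{B}_+(L)$), so $\mathrm{Bs}(|mL|)\cap U=\emptyset$ and $\phi_{|mL|}$ is a morphism on $U$. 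Since the $t_j$ separate points and tangent vectors of $X$ while some $s_i$ can always be made nonzero at a given point of $U$, a short case analysis shows the subsystem spanned by the $s_it_j$ — hence $\phi_{|mL|}|_U$ — separates points and tangent vectors, i.e. is a locally closed embedding.

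For (b): suppose $\phi_{|mL|}$ restricts to an isomorphism of $V$ onto a locally closed subvariety of $\bP^s$; in particular $V\cap\mathrm{Bs}(|mL|)=\emptyset$. Resolve the indeterminacy of $\phi_{|mL|}$ by a proper birational $p\colon\widetilde X\to X$ from a smooth variety, chosen to be an isomorphism over $X\setminus\mathrm{Bs}(|mL|)\supseteq V$, with induced morphism $f\colon\widetilde X\to\overline W\subseteq\bP^s$ onto the closure of the image. Then $p^{*}(mL)\sim N+E$ with $N=f^{*}\calO_{\bP^s}(1)$ big and nef (it is the pullback of an ample class under a birational morphism, since $\dim\overline W=\dim X$) and $E\ge 0$ the fixed part, supported on $p^{-1}(\mathrm{Bs}|mL|)$, hence disjoint from $p^{-1}(V)$. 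By Nakamaye's theorem $\textbf{B}_+(N)=\mathrm{Null}(N)$ (see \cite[Thm.~10.3.5]{lazarsfeld2}); a subvariety $Z$ meeting $p^{-1}(V)$ cannot lie in $\mathrm{Null}(N)$ because $f$ restricts to an isomorphism of $p^{-1}(V)$ onto an open subset of $\overline W$, where $\calO(1)$ is ample, so $N^{\dim Z}\cdot Z>0$. Thus $\textbf{B}_+(N)\cap p^{-1}(V)=\emptyset$; adding the effective divisor $E$ can only enlarge the augmented base locus inside $\mathrm{Supp}(E)$, so $\textbf{B}_+(p^{*}(mL))\cap p^{-1}(V)=\emptyset$. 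Transporting along $p$ (an isomorphism over $V$) gives $\textbf{B}_+(L)\cap V=\textbf{B}_+(mL)\cap V=\emptyset$, i.e. $V\subseteq U$. Combining (a) and (b) identifies $U$ with the largest Zariski open set on which $\phi_{|mL|}$ is an isomorphism onto its image.

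\textbf{Main obstacle.} The argument rests on two external inputs that carry the real weight: the stabilization property $\textbf{B}_+(L)=\textbf{B}(L-\epsilon A)$, which is what lets one realize the "almost ample" direction by genuine sections with an ample amount to spare, and Nakamaye's theorem $\textbf{B}_+=\mathrm{Null}$ for big and nef classes, used on the resolution. The delicate bookkeeping in a full write-up is: arranging $p$ to be an isomorphism over $V$ so that $E$ stays off $p^{-1}(V)$; the several successive "replace $m$ by a large multiple" reductions in (a); and the compatibility of $\textbf{B}_+$ with birational pullback over the isomorphism locus, which is where normality of $X$ (via pushforward of sections) is used.
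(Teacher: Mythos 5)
The paper itself does not prove this statement; it is quoted directly from \cite{baselocus} (Boucksom--Cacciola--Lopez), which generalizes the smooth case of Ein--Lazarsfeld--Musta\c{t}\u{a}--Nakamaye--Popa to normal varieties. So there is no in-paper argument to compare against. Your reconstruction does recover the actual skeleton of the BCL proof: the stabilization $\textbf{B}_+(L)=\textbf{B}(L-\epsilon A)$, the products $s_it_j$ splitting $mL$ into a ``base-point-free away from $\textbf{B}_+$'' piece and a very ample piece, Nakamaye's theorem applied on a resolution in part (b), the estimate $\textbf{B}_+(N+E)\subseteq\textbf{B}_+(N)\cup\operatorname{Supp}(E)$, and the use of normality to transport $\textbf{B}_+$ along the birational isomorphism locus. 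Part (b) is correct as written modulo the cited background lemmas.

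The one step in (a) that is not an implication is the final ``separates points and tangent vectors, i.e.\ is a locally closed embedding.'' That implication holds for morphisms from \emph{proper} schemes, but $U=X\setminus\textbf{B}_+(L)$ is not proper, and for non-proper sources it fails. A concrete failure: let $C\subset\bP^2$ be a nodal cubic, $\nu:\bP^1\to C$ its normalization with $\nu^{-1}(\text{node})=\{q_1,q_2\}$, and $U=\bP^1\setminus\{q_2\}$. Then $\nu|_U$ is injective, unramified, and given by a linear system that separates points and tangent vectors of $U$, yet it is a bijection onto the singular curve $C$ and therefore not an isomorphism onto its image. What rescues your argument is the extra structure you built but did not invoke in the conclusion: on each open $\{s_i\neq 0\}$ the subsystem $\{s_it_j\}_j$ is projectively equal to the very ample system $\{t_j\}_j$, so $\phi_{|mL|}$ there admits a linear projection onto the closed embedding $\iota$ defined by $|m\epsilon A|$; being a section of that projection over the locally closed subvariety $\iota(\{s_i\neq 0\})$, the restriction $\phi_{|mL|}|_{\{s_i\neq 0\}}$ is a locally closed embedding. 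Combined with the global injectivity on $U$ (which your separation argument does establish, and which shows each image $\phi_{|mL|}(\{s_i\neq 0\})$ is open and dense in $Y_m$), these local embeddings patch to give the isomorphism of $U$ onto an open subset of $Y_m$. So replace the appeal to ``separation implies embedding'' with this factorization argument, and the proposal goes through.
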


Before giving our definition of almost ample, we recall the notion of augmented base loci for coherent sheaves. 

\begin{definition}\cite[Definition 2.4]{baselocus}
Let $X$ be a normal projective variety, let $\sF$ be a coherent sheaf, and $A$ an ample line bundle. Let $r = p/q \in \Q >0$. The \emph{augmented base locus} of $\sF$ is \[ \mathbf{B}_+(\sF) := \displaystyle\bigcap_{r \in \Q >0} \mathbf{B}(\sF - rA),\] where $\mathbf{B}(\sF-rA) = \mathbf{B}(\Sym^{[q]}\sF \otimes A^{-p})$.\end{definition}

\begin{remark}\leavevmode
\begin{enumerate}
    \item The augmented base locus does not depend on the choice of an ample divisor $A$. 
    \item By \cite[Proposition 3.2]{baselocus}, if $\sF$ is a coherent sheaf and  $\pi: 
\PP(\sF) = \PP(\Sym \sF) \to X$ is the canonical morphism, then $\pi(\mathbf{B}_+(\calO_{\PP(\sF)}(1))) = \mathbf{B}_+(\sF)$, i.e. the non-ample locus of $\sF$.
\end{enumerate}
\end{remark}

\begin{definition}\label{def:almostample} Let $(X,D)$ be a pair of a smooth projective variety and a normal crossings divisor $D$. We say that the log cotangent sheaf $\Omega^1_X(\log D)$ is \emph{almost ample} if 
\begin{enumerate}
\item $\Omega^1_X(\log D)$ is big, and
\item $\textbf{B}_+(\Omega^1_X(\log D)) \subseteq \rm{Supp}(D)$.
\end{enumerate}
\end{definition}

\newpage 

\begin{remark}\leavevmode
\begin{enumerate}
\item We can define the above notion more generally for singular varieties (see \cite{advt}), e.g. varieties with lc and slc singularities coming from moduli theory. This is necessary to obtain the uniformity results in loc. cit., however it is unnecessary for the proof of Theorem \ref{thm:main}. 
\item When $X$ is smooth, our notion does not quite coincide with almost ample as in \cite[Definition 2.1]{brotbek}. If the log cotangent sheaf is almost ample in the sense of \cite{brotbek}, then it is almost ample in our sense. However, our definition is a priori weaker. 
\item Brotbek-Deng proved that for any smooth projective $X$ there exists a choice of $D$ so that the log cotangent bundle $\Omega^1_X(\log D)$ is almost ample (see Theorem \ref{thm:brotbekdeng}).
\item For a log smooth pair $(X,D)$ with almost ample $\Omega^1_X(\log D)$, the complement $X \setminus D$ is Brody hyperbolic by the base locus condition (see e.g. \cite[Proposition 3.3]{Demailly1997}).
\end{enumerate} \end{remark}

We now state the above theorem of Brotbek-Deng. 

\begin{theorem}\cite[Theorem A]{brotbek}\label{thm:brotbekdeng}
Let $Y$ be a smooth projective variety of $\dim Y = n$ and let $c > n$. Let $\calL$ be a very ample line bundle on $Y$. For any $m \geq (4n)^{n+2}$ and for general hypersurfaces $H_1, \dots, H_c \in |\calL^m|$, writing $D = \sum_{i = 1}^c H_i$, the logarithmic cotangent bundle $\Omega^1_Y(\log D)$ is almost ample. In particular, $Y \setminus D$ is Brody hyperbolic.
\end{theorem}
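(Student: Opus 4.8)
A complete proof is the content of \cite{brotbek}; the plan I would follow reproduces its architecture. Write $P = \PP(\Omega^1_Y(\log D))$, with projection $\pi : P \to Y$ and tautological quotient line bundle $\calO_P(1)$, so that $\Omega^1_Y(\log D)$ is almost ample precisely when $\calO_P(1)$ is big and $\pi(\mathbf{B}_+(\calO_P(1))) \subseteq D$. Unwinding the augmented base locus, it suffices to produce, for some integer $a \gg 0$ and some fixed ample line bundle $A$ on $Y$, a collection of global sections of $\Sym^a \Omega^1_Y(\log D) \otimes A^{-1}$ — equivalently of $\calO_P(a)\otimes \pi^* A^{-1}$ — whose common zero locus in $P$ is a proper closed subset projecting into $D$. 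Indeed, a collection with proper common zero locus already exhibits $\Sym^a\Omega^1_Y(\log D)\otimes A^{-1}$ as generically globally generated, hence $\Omega^1_Y(\log D)$ big in the sense of Definition \ref{def:vbundles}, while the containment of that zero locus in $\pi^{-1}(D)$ is exactly condition (2) of almost ampleness.

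The sections should be built out of the defining equations of $D$. Fix $s_i \in H^0(Y,\calL^m)$ with $\divv(s_i) = H_i$; after a local trivialisation of $\calL^m$ the logarithmic derivative $d\log s_i = ds_i/s_i$ is a section of $\Omega^1_Y(\log D)$, and forming Wronskian-type determinants in the $s_i$ and their higher jets produces global sections of $\Sym^a\Omega^1_Y(\log D)\otimes \calL^{b}$ for controlled $a$ and $b$. The obstruction is the \emph{positive} twist by $\calL^{b}$, whereas one needs the negative twist $A^{-1}$. Here I would invoke Brotbek's ``moving coefficients'' method: rather than the fixed tuple $(H_1,\dots,H_c)$, work with the universal family of such tuples, enlarged by auxiliary hypersurfaces carrying free coefficients, and on the total space assemble combinations of the logarithmic Wronskians whose $\calL$-twists cancel, yielding sections of $\Sym^a\Omega^1\otimes A^{-1}$ there. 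Restricting to the fibre over a \emph{general} point of the parameter space returns the required sections on $Y$ itself; this is where the genericity of $H_1,\dots,H_c$ and the explicit bound $m \ge (4n)^{n+2}$ are consumed, the bound governing how many auxiliary jets and coefficients are needed and forcing the relevant linear systems to be base-point free away from $D$.

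The remaining and hardest point is to verify that the common zero locus $B \subset P$ of the sections so constructed satisfies $\pi(B) \subseteq D$. The mechanism is that a component of $B$ lying over $Y \setminus D$ would impose extra dependence relations among $d\log s_1,\dots,d\log s_c$ along it; since $c > n = \dim Y$, for a general choice of the $H_i$ these logarithmic forms already span $\Omega^1_Y(\log D)$ at the generic point of $Y$, and a Bertini-type dimension count — again using $m$ large — shows that no such component can occur off $D$. This is essentially a Nakamaye-style analysis of the non-ample locus intertwined with the genericity of the family, and it is where nearly all the work lies. Granting it, $\Omega^1_Y(\log D)$ is almost ample, and the final assertion follows from the fourth item of the Remark after Definition \ref{def:almostample} (equivalently \cite[Proposition 3.3]{Demailly1997}): a log smooth pair with almost ample log cotangent bundle has Brody hyperbolic complement, since a non-constant entire curve $\bC \to Y\setminus D$ lifts to $P$ with image avoiding $\mathbf{B}_+(\calO_P(1))$, where $\calO_P(1)$ is ample, contradicting the standard Brody--Kobayashi argument.
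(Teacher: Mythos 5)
The paper gives no proof of this theorem; it is cited directly as Brotbek--Deng's Theorem A, so there is no internal argument here to compare against. Your sketch — reducing almost ampleness to base-locus control of $\Sym^a \Omega^1_Y(\log D) \otimes A^{-1}$ on $\PP(\Omega^1_Y(\log D))$, building sections from logarithmic Wronskians of the defining sections $s_i$, cancelling the unwanted positive $\calL$-twist via the moving-coefficients construction on the universal family, pushing the residual base locus into $\pi^{-1}(D)$ by a dimension count using $c > n$ and the lower bound on $m$, and then invoking \cite[Proposition 3.3]{Demailly1997} for the Brody conclusion — is a faithful high-level reconstruction of the cited proof's architecture, and the plan is sound as far as it goes (your last sentence is phrased loosely, since $\calO_P(1)$ is not ample but merely big with $\mathbf{B}_+$ over $D$, and Demailly's argument runs through a degenerate metric and the Ahlfors--Schwarz lemma rather than a direct ampleness contradiction, but the attribution is correct).
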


We now present the proof that almost ample log cotangent implies that all subvarieties are of log general type. We note that the proof of the statement in full generality is outside the scope of these notes, and so we present a simplified proof which works for log smooth pairs.

 \begin{theorem}\label{thm:almostample}\cite[Theorem 1.5]{advt} Let $(X, D)$ be a log smooth pair. If $(X, D)$ has almost ample
log cotangent $\Omega^1_X(\log D)$, then all pairs $(Y, E)$ where $E := (Y \cap D)_{\textrm{red}}$ with $Y \subset X$ irreducible and not contained
in $D$ are of log general type. \end{theorem}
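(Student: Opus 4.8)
The plan is to adapt the proof of Proposition~\ref{prop:lazarsfeld} (the projective, ample case) to the logarithmic, almost ample setting; the extra work is entirely the bookkeeping of augmented base loci. Fix an irreducible $Y\subset X$ with $Y\not\subset D$, put $d=\dim Y$, and choose a log resolution $\mu\colon(\tilde Y,\tilde E)\to(Y,E)$: here $\tilde Y$ is smooth projective, $\tilde E$ is a reduced snc divisor containing $\mathrm{Exc}(\mu)$ and the reduced total transform of $E=(Y\cap D)_{\mathrm{red}}$, and $\mu$ is an isomorphism over the smooth locus of $Y\setminus E$. By the definition of log general type it suffices to prove that $\omega_{\tilde Y}(\tilde E)$ is big. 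Write $f=\iota\circ\mu\colon\tilde Y\to Y\hookrightarrow X$.

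The first step is to build a natural map of log differentials. The key observation is that $f^{-1}(\mathrm{Supp}\,D)=\mu^{-1}(Y\cap D)$ is supported on $\mu^{-1}(E)\subseteq\mathrm{Supp}\,\tilde E$, and -- crucially, because $Y\not\subset D$ -- it is a divisor rather than all of $\tilde Y$; hence $f$ is a morphism of log pairs and there is a functorial pullback homomorphism $df\colon f^*\Omega^1_X(\log D)\to\Omega^1_{\tilde Y}(\log\tilde E)$. Over the dense open $Y^{\circ}\subseteq Y$ that is smooth, disjoint from $D$, and over which $\mu$ is an isomorphism, both sheaves are ordinary cotangent sheaves and $df$ is the conormal surjection $\Omega^1_X|_{Y^{\circ}}\twoheadrightarrow\Omega^1_{Y^{\circ}}$, so $df$ is generically surjective. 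Taking $d$-th exterior powers and using $\bigwedge^{d}\Omega^1_{\tilde Y}(\log\tilde E)\cong\omega_{\tilde Y}(\tilde E)$ for the log smooth pair $(\tilde Y,\tilde E)$, I obtain a generically surjective homomorphism
\[
\textstyle\bigwedge^{d}df\colon\quad \mathcal G:=f^*\!\bigwedge^{d}\Omega^1_X(\log D)\ \longrightarrow\ \omega_{\tilde Y}(\tilde E),
\]
so over a dense open of $\tilde Y$ the line bundle $\omega_{\tilde Y}(\tilde E)$ is a quotient of the vector bundle $\mathcal G$; equivalently there is a rational section $s\colon\tilde Y\dashrightarrow\mathbb P(\mathcal G)$, defined in codimension one, with $s^*\mathcal O_{\mathbb P(\mathcal G)}(1)\cong\omega_{\tilde Y}(\tilde E)$.

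Now I feed in almost ampleness. First, $\bigwedge^{d}\Omega^1_X(\log D)$ inherits it: it is big and $\mathbf{B}_+\big(\bigwedge^{d}\Omega^1_X(\log D)\big)\subseteq\mathbf{B}_+\big(\Omega^1_X(\log D)\big)\subseteq\mathrm{Supp}\,D$, using that $\mathbf{B}_+(\bigwedge^{d}\mathcal F)\subseteq\mathbf{B}_+(\mathcal F)$ for a coherent sheaf $\mathcal F$ -- the augmented-base-locus upgrade, in the sense of \cite{baselocus}, of the fact that exterior powers of ample bundles are ample. Write $P=\mathbb P\big(\bigwedge^{d}\Omega^1_X(\log D)\big)$ with $\pi\colon P\to X$, so that $\pi(\mathbf{B}_+(\mathcal O_P(1)))=\mathbf{B}_+\big(\bigwedge^{d}\Omega^1_X(\log D)\big)\subseteq\mathrm{Supp}\,D$; then $\mathbb P(\mathcal G)=P\times_X\tilde Y$ and $\mathcal O_{\mathbb P(\mathcal G)}(1)$ is the pullback of $\mathcal O_P(1)$. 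Because $f$ is birational onto $Y$ and $Y\not\subset\mathrm{Supp}\,D$, the augmented base locus of this pullback, pushed down to $\tilde Y$, lies inside $\mathrm{Supp}\,\tilde E\cup\mathrm{Exc}(\mu)$ -- a proper closed subset of $\tilde Y$. Since $s(\tilde Y)$ dominates $\tilde Y$, it is not contained in $\mathbf{B}_+(\mathcal O_{\mathbb P(\mathcal G)}(1))$, so the restriction of $\mathcal O_{\mathbb P(\mathcal G)}(1)$ to the closure of $s(\tilde Y)$ is big; but that restriction is exactly $\omega_{\tilde Y}(\tilde E)$. (This is precisely the ``a big vector bundle has a big quotient line bundle off its base locus'' step implicit in Proposition~\ref{prop:lazarsfeld}; one may also finish via Kodaira's Lemma, Remark~\ref{rmk:big}.) Hence $(\tilde Y,\tilde E)$, and therefore $(Y,E)$, is of log general type.

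I expect the main obstacle to be the two linked technical points rather than any single hard estimate. First, constructing the log pullback $df$ with the correct functoriality: this is the unique place the hypothesis $Y\not\subset D$ is used, and it forces one to set up the log resolution so that $f$ is genuinely a morphism of log pairs (the reduced preimage of $D$ must land in $\tilde E$). Second, the base-locus bookkeeping: that the condition ``big with $\mathbf{B}_+\subseteq\mathrm{Supp}\,D$'' survives passage to exterior powers and pullback along the resolution, and that $\mathbf{B}_+\subseteq\mathrm{Supp}\,D$ is exactly the condition guaranteeing that \emph{every} $Y$ meeting $X\setminus D$ produces a section avoiding the bad locus. In the ample projective case (Proposition~\ref{prop:lazarsfeld}) there is no base locus to track and the argument is three lines; here essentially all of the content is in showing that almost ampleness localizes correctly.
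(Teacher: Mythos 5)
Your argument takes a genuinely different route from the paper's. The paper's proof is short: it constructs the same generically surjective map $\phi^*\Omega^1_X(\log D) \to \Omega^1_{\widetilde{Y}}(\log\widetilde{E})$ you do, observes that its image is a big subsheaf of $\Omega^1_{\widetilde{Y}}(\log\widetilde{E})$ (being a quotient of a sheaf that is big because $Y\not\subset \mathbf{B}_+(\Omega^1_X(\log D))$), notes that the determinant of the image is big, and then invokes the Campana--P\u{a}un theorem \cite[Theorem 4.1]{cp} (equivalently Schnell \cite[Theorem 1]{schnell}) to conclude $K_{\widetilde{Y}}+\widetilde{E}$ is big. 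That theorem---that a subsheaf of $\Omega^1_{\widetilde Y}(\log\widetilde E)$ with big determinant forces $K_{\widetilde Y}+\widetilde E$ to be big---is the load-bearing ingredient and is highly non-trivial. You, by contrast, transport positivity through the \emph{source} rather than the image: you apply $\bigwedge^d$ to $\Omega^1_X(\log D)$ before pulling back, producing a generically surjective map $f^*\bigwedge^d\Omega^1_X(\log D)\to\omega_{\widetilde Y}(\widetilde E)$, and then argue directly. This closely mimics the paper's proof of Proposition~\ref{prop:lazarsfeld} and has the merit of being more elementary in spirit; but it places the entire burden on a claim the paper never needs and you never prove, namely that almost ampleness passes to exterior powers.

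That claim is the gap. You write that $\mathbf{B}_+(\bigwedge^d\mathcal F)\subseteq\mathbf{B}_+(\mathcal F)$ ``is the augmented-base-locus upgrade of the fact that exterior powers of ample bundles are ample.'' Hartshorne's result that exterior powers of ample vector bundles are ample is genuine, but the augmented-base-locus version---that $\bigwedge^d$ of a big vector bundle (in the sense of Definition~\ref{def:vbundles}) is big, let alone with $\mathbf{B}_+$ contained in the same closed set---is not established in the paper, is not an obvious formal consequence of the ample case, and is not something you derive. The ample statement is proved via properties of Grassmannian bundles and does not trivially descend to the notion of bigness being used here (note the paper's own care in Example~\ref{ex:big} in distinguishing ``big'' from ``weakly big''; your assertion would need a proof compatible with the stronger Definition~\ref{def:vbundles}). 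Controlling exactly this kind of step is precisely why the paper reaches for Campana--P\u{a}un/Schnell: in that approach one only needs ``quotient of a big sheaf is big'' and ``determinant of a big sheaf is big,'' both of which are much more accessible than an exterior-power lemma. Until you either prove your exterior-power claim or replace it (for instance, by taking the sheaf-theoretic image first as the paper does, so that only a determinant of a rank-$d$ big quotient appears, and then making the Campana--P\u{a}un step explicit or replacing it with a direct argument that a big full-rank subsheaf of $\Omega^1_{\widetilde Y}(\log\widetilde E)$ has big determinant embedding into $\omega_{\widetilde Y}(\widetilde E)$), the argument as written is incomplete.
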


\begin{proof} Consider a log resolution $(\widetilde{Y}, \widetilde{E}) \to (Y, E)$, which gives a map $\phi: (\widetilde{Y}, \widetilde{E}) \to (X,D)$. Since $Y$ is not contained in $D$, by the definition of almost ample, $Y$ is not contained in the base locus of $\Omega^1_X(\log D)$. This gives a map $\phi^*(\Omega^1_X(\log D)) \to \Omega_{\widetilde{Y}}(\log \widetilde{E})$. The image of this map is a big subsheaf of $\Omega_{\widetilde{Y}}(\log \widetilde{E})$, being a quotient of a big sheaf, and thus its determinant is also big. By \cite[Theorem 4.1]{cp} (see also \cite[Theorem 1]{schnell}) $K_{\widetilde{Y}} + \widetilde{E}$ is big, and so $(Y,E)$ is of log general type. \end{proof}

\begin{remark}
In the above proof, we used that the quotient of a big sheaf is a big sheaf. We stress that this is not true for weakly big. The key idea is to be big there is a generically surjective map, and this map remains generically surjective when restricting to a subvariety not contained in the base locus (in this case a subvariety contained in the divisor $D$).   
\end{remark}

In \cite{advt}, we prove this statement in further generality. Namely, we prove the result for pairs with singularities which arise from moduli theory (i.e. lc and slc singularities). This is necessary for the proofs of uniformity in loc. cit.  We now show an alternative proof for Theorem \ref{thm:almostample} in the case $\dim X = 2$, which avoids the use of \cite{cp}. 

\begin{proof}[Alternative proof of Theorem \ref{thm:almostample} if $\dim X = 2$.] By assumption, $\Omega^1_X(\log D)$ is big and so its restriction to any subvariety $Y \not\subset D$ is still big. Since $Y$ is a curve, big is equivalent to ample, and so the restriction is actually ample. Consider the normalization $\phi: Y^v \to Y$ and denote by $E^v$ the divisor $E^v  = \phi^{-1}(E) \cup \{\textrm{ exceptional set of } \phi \}$.  Since $\Omega^1_X(\log D)|_Y$ is ample, its pullback $\phi^*(\Omega^1_X(\log D)|_Y)$ is big. There is a generically surjective map (see \cite[Theorem 4.3]{gkkp})
$$\phi^*(\Omega^1_X(\log D)|_Y) \to \Omega^1_{Y^v}(E^v) = \calO_{Y^v}(K_{Y^v} + E^v).$$ Therefore we see that $\calO_{Y^v}(K_{Y^v} + E^v)$ is big and so $(Y,E)$ is of log general type. \end{proof}

\section{Semi-abelian varieties and the quasi-Albanese map}\label{sec:semiabelian}
In this section we introduce semi-abelian varieties, and prove a generalization of Moriwaki's theorem. In particular, we show that the Lang-Vojta conjecture holds for varieties which have almost ample and globally generated log cotangent bundle. 

\begin{theorem}\cite{advt}\label{thm:main} Let V be a smooth quasi-projective variety with log smooth
compactification $(X, D)$ over a number field $K$. If the log cotangent sheaf $\Omega^1_X(\log D)$ is globally
generated and almost ample, then for any finite set of places $S$ the set of $S$-integral points $V(\calO_{K,S})$ is finite. \end{theorem}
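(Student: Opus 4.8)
The plan is to mimic the structure of the proof of Moriwaki's theorem (Theorem \ref{thm:moriwaki}), replacing the Albanese variety with the \emph{quasi-Albanese} variety and Faltings' big theorem (Theorem \ref{thm:bigfaltings}) with Vojta's theorem on semi-abelian varieties (Theorem \ref{thm:vojta}). Concretely: first one attaches to $V$ its quasi-Albanese variety $\mathcal{A} = \mathrm{Alb}(V)$, a semi-abelian variety, together with the quasi-Albanese morphism $\alpha\colon V \to \mathcal{A}$. The defining property we need is that $\alpha$ is dual, at the level of (co)tangent spaces, to the space of global logarithmic $1$-forms; more precisely $\alpha^*$ induces an isomorphism $H^0(\mathcal{A}, \Omega^1_{\mathcal{A}}) \xrightarrow{\sim} H^0(X, \Omega^1_X(\log D))$, and global generation of $\Omega^1_X(\log D)$ translates into the statement that $\alpha$ is an \emph{immersion} (the differential $d\alpha$ is injective everywhere on $V$). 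This is the step where I would develop the machinery of Section \ref{sec:semiabelian}: construction of the quasi-Albanese, its universal property, and the translation ``$\Omega^1_X(\log D)$ globally generated $\Rightarrow$ $\alpha$ is an immersion''.

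Granting that $\alpha\colon V \hookrightarrow \mathcal{A}$ is an immersion, the next step is to identify $V$ with a locally closed subvariety of the semi-abelian variety $\mathcal{A}$ and control its $S$-integral points. One must be slightly careful: $\alpha$ need not be a closed immersion, so I would pass to the image $W = \overline{\alpha(V)}$ (an irreducible subvariety of $\mathcal{A}$) and argue that $S$-integral points of $V$ map to $S$-integral points of $W$ relative to a suitable model, using functoriality of integral points and the Chevalley--Weil type argument to absorb the finite map $V \to \alpha(V)$ if it is not an isomorphism. Then Vojta's Theorem \ref{thm:vojta} applies: either $W$ is a translate of a semi-abelian subvariety, or $W(\mathcal{O}_{K,S})$ is not Zariski dense.

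To upgrade non-density to finiteness, I would run a Noetherian induction on $\dim V$, exactly as in the proof sketch of Theorem \ref{thm:moriwaki}: by Vojta (Corollary \ref{cor:vojta}) it suffices to show $W$ contains \emph{no} translate of a positive-dimensional semi-abelian subvariety $B \subsetneq \mathcal{A}$. Suppose it did; then $W$ (hence $V$, or rather the relevant locally closed locus) would contain a translate of $B$, and on that translate the pullback of $\Omega^1_{\mathcal{A}}$ — and therefore the restriction of $\Omega^1_X(\log D)$ to the corresponding subvariety $Y \subset X$ — would have a trivial quotient (the cotangent bundle of the semi-abelian variety $B$ is trivial). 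This contradicts Theorem \ref{thm:almostample}, which says every such $(Y,E)$ with $Y \not\subset D$ is of log general type, because a subvariety whose log cotangent sheaf admits a trivial quotient cannot have big log canonical bundle. (One handles the boundary case $Y \subset D$ separately: such $Y$ contribute no $S$-integral points of $V$ by definition.) Combining: $W$ contains no translated semi-abelian subvariety, so $W(\mathcal{O}_{K,S})$ — hence $V(\mathcal{O}_{K,S})$ — is finite.

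The main obstacle, and the part deserving the most care, is the construction and properties of the quasi-Albanese map over a number field and the precise dictionary between global generation of $\Omega^1_X(\log D)$ and the immersion property of $\alpha$; one must also check that the $S$-integral structure is respected, i.e. that integral points on $V$ genuinely give integral points on (a model of) the semi-abelian target, which requires choosing compatible models and invoking functoriality and Chevalley--Weil. Once that bridge is built, the geometric input from Theorem \ref{thm:almostample} does all the remaining work in ruling out semi-abelian subvarieties, and Vojta's theorem closes the argument.
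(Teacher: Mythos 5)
Your high-level plan (quasi-Albanese map, Vojta's theorem on semi-abelian varieties, Theorem \ref{thm:almostample} to rule out semi-abelian subvarieties) is exactly the paper's strategy, and your first three steps are essentially correct. However, there is a real gap in Step 5, and it is worth tracing carefully.

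You propose to apply Corollary \ref{cor:vojta} to $W = \overline{\alpha(V)}$, which requires showing that $W$ contains \emph{no} translate of a positive-dimensional semi-abelian subvariety. Your argument for this is: such a translate $B \subset W$ would produce a subvariety $Y \subset X$, not contained in $D$, on which the restriction of $\Omega^1_X(\log D)$ has a trivial quotient, contradicting Theorem \ref{thm:almostample}. But the closure $W = \overline{\alpha(V)}$ is strictly larger than $\alpha(V)$ in general, and a translate $B$ could sit entirely inside the boundary $W \setminus \alpha(V)$; in that case there is no corresponding subvariety $Y$ of $V$ (the preimage $\alpha^{-1}(B) \cap V$ is empty), and your intended contradiction evaporates. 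The passage ``then $W$ (hence $V$\dots) would contain a translate of $B$'' is simply not justified: $\alpha$ is not even injective in general (global generation gives only that $d\alpha$ is injective, i.e.\ $\alpha$ is unramified, not a locally closed immersion), and even if it were, a translate in the closure of the image need not lift to $V$.

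The paper circumvents this by never making a global statement about $W$. Instead it fixes an irreducible component $Y$ of $\overline{V(\mathcal O_{K,S})}$ — which is a genuine subvariety of $V$ — and runs the argument there. The first proof shows that $B := \alpha(Y)$ is a semi-abelian translate by Vojta's Theorem \ref{thm:vojta}, then uses the surjection $(\alpha|_Y)^* \Omega^1_B \to \Omega^1_Y$ plus a rank count to conclude that $\alpha|_Y$ is \'etale and $Y$ is smooth, and finally applies Fujino's characterization to conclude that $Y$ is itself a semi-abelian variety, contradicting Theorem \ref{thm:almostample}. The second proof avoids even this and runs purely on dimension: almost ampleness and global generation pass from $\Omega^1_X(\log D)$ to $\Omega^1_{\overline Y}(\log \overline E)$, the map $\PP(\Omega^1_{\overline Y}(\log \overline E)) \to \PP^N$ given by global sections is generically finite because $\mathcal O(1)$ is big, which forces $q(Y) \geq 2\dim Y > d(Y)$, and then Proposition \ref{prop:tm1} (which itself is a consequence of Theorem \ref{thm:vojta} applied to $\overline{\alpha(Y)}$ together with Remark \ref{rmk:semiab}) gives that $Y(\mathcal O_{K,S})$ is not dense in $Y$, contradicting the choice of $Y$. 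Either route replaces your Corollary \ref{cor:vojta} step with the softer input Theorem \ref{thm:vojta} together with the universal property of the quasi-Albanese, and avoids any claim about translates living in the boundary of $\alpha(V)$. You could repair your argument by adopting this structure: instead of ruling out all semi-abelian translates in $W$, restrict attention to the image of an irreducible component of $\overline{V(\mathcal O_{K,S})}$, where Remark \ref{rmk:semiab} (no proper semi-abelian subvariety contains the image of the quasi-Albanese of that component) does the work that your ``trivial quotient'' argument was meant to do.
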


We begin with the definition of a semi-abelian variety. Our discussion follows \cite{fujino}. 

\begin{definition}\label{def:semiab} A \emph{semi-abelian variety} is an irreducible algebraic group $A$ which, after a suitable base change, can be realized as an extension of an abelian variety by a linear torus, i.e. the middle term of an exact sequence $$1 \to \mathbb{G}_m^r \to A \to A_0 \to 1,$$ where $A_0$ is an abelian variety. \end{definition}

\begin{example}
  Immediate examples of semi-abelian varieties are tori and abelian varieties. Any product of a torus with an abelian variety is a semi-abelian variety called \emph{split}. 
\end{example}

By Vojta's generalization of Faltings' theorem (see Theorem \ref{thm:vojta} and Corollary \ref{cor:vojta}), one way to obtain finiteness of the set of integral points is to consider varieties $X \setminus D$ that satisfy the following two conditions:
\begin{enumerate}
  \item $X \setminus D$ embeds in a semi-abelian variety as a proper subscheme;
  \item $X \setminus D$ does not contain any subvariety which is isomorphic to (a translate of) a semi-abelian variety.
\end{enumerate}

Clearly the two conditions imply that the set of $D$-integral points on $X$ is finite. This strategy has some similarity with the proof of Siegel's Theorem using the Roth's Theorem, where one make use of the embedding of the curve in its Jacobian.\medskip

To embed a pair in a semi-abelian variety we will use the theory of (quasi-)Albanese maps. Recall that every variety admits a universal morphism to an abelian variety, called the \emph{Albanese map}. The same is true for quasi-projective varieties where the universal morphism instead maps to a semi-abelian variety.

\begin{definition}\cite[Definition 2.15]{fujino}\label{def:qalbanese} Let $V$ be a smooth variety. The \emph{quasi-Albanese map} $$\alpha: V \to \calA_V$$ is a morphism to a semi-abelian variety $\calA_V$ such that 
\begin{enumerate}
\item For any other morphism $\beta: V \to B$ to a semi-abelian variety $B$, there is a morphism $f: \calA_V \to B$ such that $\beta = f \circ \alpha$, and
\item the morphism $f$ is uniquely determined. \end{enumerate}
The semi-abelian variety $\calA_V$ is called the \emph{quasi-Albanese variety} of $X$ and was constructed originally by Serre in \cite[Th\'eor\`eme 7]{serre}. \end{definition}

\begin{remark}\label{rmk:semiab}\leavevmode
  \begin{itemize} \item If $V = \calC$ is a projective curve then $\calA_V$ is the abelian variety $\Jac(\calC)$.
  \item If $V = X\setminus D$ is  rational then $\calA_V$ is a torus. (Exercise)
  \item There is no semi-abelian subvariety of $\calA_V$ containing $\alpha(V)$.
\end{itemize}
\end{remark}

\subsubsection{Construction of $\calA_V$}
We briefly sketch the construction of $\calA_V$ for a smooth quasi-projective variety $V$ defined over the complex numbers. More generally if $V$ is defined over a perfect field $k$ one can define more abstractly the Albanese variety to be the dual of the Picard variety of $X$. 
In what follows we use the standard notation $q(V) = \dim H^0(X,\logct)$ and $q(X) = \dim H^0(X,\Omega^1_X)$.

 Let $\{\omega_1,\dots,\omega_{q(X)},\varphi_1,\dots,\varphi_\delta\}$ be a basis of $H^0(X,\logct)$. The quasi-Albanese variety of $V$ is $\calA_V \cong \C^{q(V)} / L$, where $L$ is the lattice defined by the periods, i.e. the integrals of the basis elements of $H^0(X,\logct)$ evaluated on a basis of the free part of $H_1(V,\Z)$. Then $\calA_V$ is a semi-abelian variety \cite[Lemma 3.8]{fujino}. If $0 \in V$ is a point of $V$, then the map $\alpha: V \to \calA_V$ is defined as
\[
  P \mapsto \Big(\int_0^P \omega_1, \dots, \int_0^P \omega_{q(X)},\int_0^P \varphi_1, \dots \int_0^P \varphi_\delta \Big).
\]
The map $\alpha$ is well defined \cite[Lemma 3.9]{fujino} and one can check that $\alpha$ is an algebraic map \cite[Lemma 3.10]{fujino}. In particular $\dim \calA_V = q(V)$. We will denote by $d(V) = \dim \alpha(V)$.

We see now that in order to use the quasi-Albanese variety we will need to impose some condition on the positivity of the sheaf $\logct$.  The main idea is that the geometric conditions on the log cotangent sheaf will ensure that we can embed $V$ inside its quasi-Albanese as a proper subvariety and then ensure that it does not contain any proper semi-abelian subvariety and conclude using Vojta's Theorem \ref{thm:vojta}.

%The result follows by proving
%\begin{itemize}
%  \item the image $\alpha(X)$ has positive dimension and $\dim \alpha(X) < \dim A$.
%  \item Vojta implies that is $\alpha$ is not surjective, the set of $S$-integral points of $\alpha(X)$ are not Zariski dense.
%  \item every irreducible component of $\overline{X(\calO_S)}$ is geometrically irreducible and not of log general type (isomorphic to a semi-abelian variety).
%  \item almost-ampleness of $\logct$ implies that $X(\calO_S)$ is finite.
%\end{itemize}

\subsection{Proof of the main theorem}
We begin with the following. 

\begin{prop}\label{prop:tm1} 
Let $V$ be a smooth quasi-projective variety over a number field $K$. If $d(V) < q(V)$, then the closure of $V(\calO_{S,K})$ in $V$ is a proper closed subset. \end{prop}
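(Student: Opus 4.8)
The strategy is to use the quasi-Albanese map $\alpha \colon V \to \calA_V$ together with Vojta's theorem (Theorem~\ref{thm:vojta}) applied to the image $W := \overline{\alpha(V)} \subseteq \calA_V$. The key numerical input is the hypothesis $d(V) < q(V)$: since $\dim \calA_V = q(V)$ and $d(V) = \dim \alpha(V) = \dim W$, the closed subvariety $W$ is a \emph{proper} subvariety of the semi-abelian variety $\calA_V$. I would first reduce to the case where $V(\calO_{S,K})$ is Zariski dense in $V$ (otherwise there is nothing to prove), and then aim to derive a contradiction, or rather to conclude that the closure is proper directly.

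\textbf{Main steps.} First I would recall that $\alpha$ is an algebraic morphism defined over $K$ (after possibly enlarging $S$ and $K$, which does not affect the statement about properness of the closure up to the usual caveats), so that integral points of $V$ map to integral points of $W$ with respect to a suitable boundary: concretely, choosing a compactification of $\calA_V$ compatible with that of $V$, the image $\alpha(V(\calO_{S,K}))$ lands in $W(\calO_{S',K'})$ for an appropriate ring of $S'$-integers. Second, by Remark~\ref{rmk:semiab}, there is no semi-abelian subvariety of $\calA_V$ containing $\alpha(V)$, hence none containing $W$; in particular $W$ is not a translate of a semi-abelian subvariety (a translate of a semi-abelian subvariety of positive dimension would be contained in the semi-abelian subvariety it generates, but more directly: if $W$ were a translate $w + B$ then $\alpha(V) \subseteq w+B$, and one checks this forces $\calA_V$ to contain a proper semi-abelian subvariety through which $\alpha$ factors, contradicting the universal property). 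Third, apply Vojta's Theorem~\ref{thm:vojta}: since $W \subseteq \calA_V$ is irreducible and not a translate of a semi-abelian subvariety, $W(\calO_{S',K'})$ is not Zariski dense in $W$. Therefore $\overline{\alpha(V(\calO_{S,K}))}$ is a proper closed subset $Z \subsetneq W$. Pulling back, $\overline{V(\calO_{S,K})} \subseteq \alpha^{-1}(Z) \cup (\text{indeterminacy locus})$; since $\alpha$ is a morphism and $Z \subsetneq W = \overline{\alpha(V)}$, the preimage $\alpha^{-1}(Z)$ is a proper closed subset of $V$, which gives the claim.

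\textbf{Expected obstacle.} The main subtlety is the bookkeeping with integral points under $\alpha$: one must ensure that $K$-rational integral points of $V$ (with respect to the given model) map to integral points of $W$ with respect to \emph{some} model, so that Vojta's theorem applies. This requires choosing compatible equivariant compactifications of $V$ and $\calA_V$ and invoking functoriality of integral points under morphisms of pairs (the boundary $D$ maps into the boundary of the chosen compactification of $\calA_V$), possibly after enlarging $S$ to absorb finitely many bad primes — and one should check that enlarging $S$ is harmless, since we only claim properness of the Zariski closure, which can only grow under enlarging $S$, so it suffices to prove it for the larger $S$. A secondary point is justifying rigorously that $W$ is not a translate of a semi-abelian subvariety; the cleanest argument is that if it were, the universal property of the quasi-Albanese map (Definition~\ref{def:qalbanese}) together with the fact that $\alpha(V)$ generates $\calA_V$ as an algebraic group would be violated, since a translate of a proper semi-abelian subvariety generates a proper subgroup. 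Everything else is a direct application of the machinery already set up in the excerpt.
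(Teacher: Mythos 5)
Your argument is correct and takes essentially the same route as the paper's: pass to the quasi-Albanese map, use $d(V)<q(V)$ to see that $\alpha(V)$ is a proper subvariety of $\calA_V$, invoke Vojta's theorem, and conclude from the fact that no semi-abelian subvariety of $\calA_V$ contains $\alpha(V)$. The paper runs this as a contradiction (assuming $V(\calO_S)$ dense, Vojta forces $\alpha(V)$ to be a translate of a semi-abelian subvariety, contradicting Remark~\ref{rmk:semiab}), whereas you argue directly and pull back the proper closed subset; your extra care about why a translate can be reduced to a genuine semi-abelian subvariety and about the transport of integral points under $\alpha$ fills in details the paper elides.
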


\begin{proof}
  Assume $V(\calO_S) \neq \emptyset$. Since $d(V) < q(V)$, the quasi-Albanese map $\alpha$ is not surjective. In particular $\alpha(V)$ is a proper subvariety of a semi-abelian variety $\calA_V$. If $V(\calO_S)$ is dense in $V$, then so is its image $\alpha(V)(\calO_S)$ in $\alpha(V)$. By Vojta's Theorem (Theorem \ref{thm:vojta}), the image $\alpha(V)$ is a semi-abelian subvariety of $\calA_V$. This is a contradiction by Remark \ref{rmk:semiab} (alternatively think about $\alpha(V)(\calO_S)$ generating $\calA_V(\calO_S))$.   \end{proof}
%
%WHAT WE NEED
%\begin{itemize}
%  \item If $f: X \to Y$ is a dominant (plus smt?) map and $X(\calO_S)$ is dense in $X$ then $f(X)(\calO_S)$ is dense in $Y$.
%  \item The set $\alpha(V)(\calO_S)$ generates $\calA_V(\calO_S)$ (in particular it cannot be a proper subvariety).
%\end{itemize}

Now we discuss the consequences of $\Omega^1_X (\log D)$ being almost ample and globally generated.

\begin{lemma}\label{lem:lem2} Let $V$ be a smooth quasi-projective variety with log smooth compactification $(X, D)$ over a field $k$ of characteristic zero. If the sheaf $\Omega^1_{X}(\log D)$ is almost ample and globally generated, then $q(V) \geq 2 \dim V$.\end{lemma}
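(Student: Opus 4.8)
\textbf{Proof plan for Lemma \ref{lem:lem2}.}

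The plan is to bound $q(V) = \dim H^0(X, \Omega^1_X(\log D))$ from below by exploiting global generation to produce a fiberwise-surjective evaluation map and then using almost ampleness to control the rank of the associated linear map. First I would recall that since $\Omega^1_X(\log D)$ is globally generated, there is a surjection $\calO_X^{\oplus q(V)} \twoheadrightarrow \Omega^1_X(\log D)$ where the source has rank exactly $q(V)$ (one may take a minimal generating set of global sections, which spans $H^0$). Dualizing, and passing to $\PP(\Omega^1_X(\log D))$, this global generation says $\calO_{\PP(\Omega^1_X(\log D))}(1)$ is globally generated, hence defines a morphism $\Phi : \PP(\Omega^1_X(\log D)) \to \PP^{q(V)-1}$. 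The key point is that by Remark following Definition \ref{def:almostample} (item on $\mathbf{B}_+$ and bigness) together with Theorem \ref{baselocus}, on the complement of $\mathbf{B}_+(\Omega^1_X(\log D))$ — which contains the dense open $X \setminus \operatorname{Supp}(D)$ since the sheaf is almost ample — the morphism $\Phi$ is an \emph{isomorphism onto its image}. In particular $\Phi$ restricted to the fiber $\PP(\Omega^1_X(\log D)|_x) \cong \PP^{n-1}$ over a point $x \in V = X \setminus D$ (where $n = \dim V = \dim X$) is a closed embedding into $\PP^{q(V)-1}$, and moreover $\Phi$ is injective on a neighborhood, so the images of these fibers for varying $x$ must be "independent" enough to force the dimension count.

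The main step is the dimension estimate. Since $\Phi$ is an isomorphism onto its image on a dense open $U \subseteq \PP(\Omega^1_X(\log D))$ lying over $V$, and $\dim \PP(\Omega^1_X(\log D)) = 2n - 1$, the image $\Phi(U)$ has dimension $2n-1$, so certainly $q(V) - 1 \geq 2n - 1$, giving $q(V) \geq 2n = 2\dim V$. To make this rigorous I would argue: $U$ is irreducible of dimension $2n-1$, $\Phi|_U$ is injective (an immersion, in fact an isomorphism onto image) so $\dim \Phi(U) = 2n - 1$, and $\Phi(U) \subseteq \PP^{q(V)-1}$ forces $q(V) - 1 \geq 2n-1$. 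The only subtlety is justifying that $\Phi$ is genuinely an isomorphism onto its image over $V$ and not merely generically finite: this is exactly the content of Theorem \ref{baselocus} applied to $\calL = \calO_{\PP(\Omega^1_X(\log D))}(1)$, using that $\Omega^1_X(\log D)$ is big (so $\mathbf{B}_+ \neq$ everything) and that its augmented base locus is contained in $\operatorname{Supp}(D)$, which does not meet $V$; one also needs that $V$ is nonempty, which we may assume (otherwise the statement is vacuous or $q(V) = 0$ forces $\dim V$ considerations — but $V$ smooth quasi-projective of the relevant kind is nonempty by hypothesis).

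The hard part will be correctly invoking the base-locus theory for the non-locally-free situation at the level of $\PP(\Omega^1_X(\log D))$ and making sure the "isomorphism onto image" statement of Theorem \ref{baselocus} is applied to the right open set — namely $\pi^{-1}(V)$ minus any residual base locus — and then transferring that back down through $\pi$ to get the bound on $q(V)$. Since $(X,D)$ is log smooth, $\Omega^1_X(\log D)$ is a genuine vector bundle of rank $n$, so $\PP(\Omega^1_X(\log D))$ is a $\PP^{n-1}$-bundle over $X$ and the total space has dimension $2n-1$; this removes the coherent-sheaf technicalities and the argument is clean. A secondary point to handle carefully is that global generation of the vector bundle $\Omega^1_X(\log D)$ is precisely equivalent to global generation of $\calO_{\PP(\Omega^1_X(\log D))}(1)$ together with the source of the surjection being $H^0(X,\Omega^1_X(\log D))\otimes\calO_X$; this is standard but should be stated. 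Once these identifications are in place, the inequality $q(V) \geq 2\dim V$ drops out of the dimension of the image of $\Phi$.
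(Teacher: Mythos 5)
Your proof is correct and takes essentially the same route as the paper: push down global generation of $\Omega^1_X(\log D)$ to get a morphism $\Phi = \phi_{|L|} : P = \PP(\Omega^1_X(\log D)) \to \PP^{q(V)-1}$, show the image has the full dimension $\dim P = 2\dim V - 1$, and conclude $q(V)-1 \geq 2\dim V - 1$. The paper's own proof is slightly leaner at the key step: it just observes that $L = \calO_P(1)$ is big, and a globally generated big line bundle induces a \emph{generically finite} morphism, which is all one needs for the dimension count. You instead reach for the stronger "isomorphism onto its image away from $\mathbf{B}_+$" statement of the cited base-locus theorem, which is a mild overreach: that theorem concerns $\phi_m$ for $m \gg 0$, not the $m=1$ map $\Phi$ you are using, so your appeal to it for $\Phi$ itself is not quite licensed as written. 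This is easy to patch (either pass to a large multiple of $L$, or simply use generic finiteness from bigness as the paper does), and it does not affect the conclusion, since the inequality only requires $\dim\Phi(P)=\dim P$.
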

\begin{proof} 
 If $P = \Proj(\Omega^1_{X}(\log D))$ and $L = \calO_P(1)$ then since $\Omega^1_{X}(\log D)$ is globally generated there is a morphism $\phi_{|L|}: P \to \PP^N$ where $\phi^*_{|L|} \calO_{\PP^N}(1) = L$ and $N = \dim_k H^0(P,L) -1$.  Furthermore, by definition $L$ is big. Then the map $\phi_{|L|}$ is generically finite which implies that 
    \[
    \dim P = \dim \phi_{\lvert L \rvert}(P) \leq N = \dim_k H^0(P,L) - 1.
  \]
    Noting that $\dim P = 2 \dim V - 1$ we obtain that
  \[
    q(V) = \dim H^0(\overline{X},\Omega^1_{X}(\log D)) \geq 2 \dim V.
  \]
   \end{proof}

\begin{theorem} Let $V \cong (X \setminus D)$ be a log smooth variety over a number field $k$, let $\calA_V$ be a semi-abelian variety and let $\alpha: V \to \calA_V$ be a morphism. If $\alpha^*(\Omega^1_{\calA_V}) \to \Omega^1_V$ is a surjective map of sheaves, then every irreducible component of $V(\calO_S)$ is geometrically irreducible and isomorphic to a semi-abelian variety. \end{theorem}

\begin{proof}
  Let $Y$ be an irreducible component of $\overline{V(\calO_S)}$. Since $Y(\calO_S)$ is dense in $Y$, we see that $Y$ is geometrically irreducible. We are thus left to show that $Y$ is isomorphic to a semi-abelian variety. For this we will use \cite[Theorem 4.2]{fujino} and so it suffices to show that $Y$ is smooth and $\alpha\vert_Y$ is \'etale.
  
  Let $B = \alpha(Y)$. Since $Y(k)$ is dense in $Y$, so is $B(k)$ in $B$. By Vojta's theorem (see Theorem \ref{thm:vojta}), $B$ is a translated of a semi-abelian subvariety of $\calA_V$. Consider the following diagram:
  
  \[
    \xymatrix{ \alpha^*(\Omega^1_{\calA_V})\vert_Y \ar[d] \ar[r] &\Omega^1_V\vert_Y \ar[d]  \\ (\alpha|_Y)^*(\Omega^1_B) \ar[r]  & \Omega^1_Y}
  \]

We know that $h: (\alpha|_Y)^*(\Omega^1_B) \to \Omega^1_Y$ is surjective. On the other hand, $\mathrm{rank}(\Omega^1_B) \leq \mathrm{rank}(\Omega^1_Y)$ and the former is locally free. Therefore $h$ is actually an isomorphism. Therefore $Y$ is smooth over $k$ and $\alpha|_Y$ is \'etale.  Thus we conclude the result by \cite[Theorem 4.2]{fujino} \end{proof}

%\begin{theorem} Let $V = (X,D)$ be a log smooth surface over a number field $k$, let $\calA_V$ be a semi-abelian variety and let $\alpha: V \to \calA_V$ be a morphism. If $\alpha^*(\Omega^1_{\calA_V}) \to \Omega^1_X(\log D)$ is surjective, then every irreducible component of $V(\calO_S)$ is geometrically irreducible and isomorphic to a semi-abelian variety. \end{theorem}
%
%\begin{proof}
%  Let $Y$ be an irreducible component of $\overline{V(\calO_S)}$. The fact that $Y$ is geometrically irreducible follows from the proof of \cite[Theorem B]{moriwaki}. We are left to show that $Y$ is isomorphic to a semi-abelian variety. For this we will use \cite[Theorem 4.2]{fujino} and thus it suffices to show that $Y$ is smooth\footnote{moriwaki uses smoothness but in Fujino I don't see it} and $\alpha_V\vert_Y$ is \'etale.
%
%  ??? We need some sort of map \[(\alpha_V\vert_Y)^*(\Omega^1_{\alpha_V(Y)}(\log D')) \to \Omega_Y^1(\log D \cap Y)\] I guess???
%\end{proof}

\begin{corollary}\label{cor:mor} Let $V \cong (X \setminus D)$ be a log smooth variety over a number field $k$. If the log cotangent sheaf $\logct$ is globally generated, then for every finite set of places $S$, every irreducible component of $\overline{V(\calO_S)}$ is geometrically irreducible and isomorphic to a semi-abelian variety. \end{corollary}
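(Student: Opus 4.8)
The plan is to deduce Corollary \ref{cor:mor} from the theorem immediately preceding it by producing the required morphism $\alpha$ to a semi-abelian variety together with the surjectivity of $\alpha^*(\Omega^1_{\calA_V}) \to \Omega^1_V$. First I would take $\alpha : V \to \calA_V$ to be the quasi-Albanese map constructed in Section \ref{sec:semiabelian}, whose existence and universal property are recorded in Definition \ref{def:qalbanese} (and whose construction over $k$ is sketched thereafter). The target $\calA_V$ is a semi-abelian variety by \cite[Lemma 3.8]{fujino}, so once we verify the cotangent surjectivity hypothesis we may invoke the preceding theorem verbatim: every irreducible component of $\overline{V(\calO_S)}$ is then geometrically irreducible and isomorphic to a semi-abelian variety.

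The heart of the argument is therefore to check that global generation of $\Omega^1_X(\log D)$ forces $\alpha^*(\Omega^1_{\calA_V}) \to \Omega^1_V$ to be surjective. The key point is that, by the construction of the quasi-Albanese variety, the pullback of the invariant differentials on $\calA_V$ along $\alpha$ spans exactly the image of $H^0(X, \Omega^1_X(\log D))$ inside the logarithmic (and hence ordinary) differentials: one has a canonical identification of $H^0(\calA_V, \Omega^1_{\calA_V})$ with $H^0(X, \Omega^1_X(\log D))$ under which $\alpha^*$ is the tautological map. Concretely, choosing a basis $\{\omega_i, \varphi_j\}$ of $H^0(X,\Omega^1_X(\log D))$ as in the construction, the coordinates of $\alpha$ are the corresponding integrals, so $\alpha^*$ of the standard invariant $1$-forms on $\C^{q(V)}/L$ are precisely the $\omega_i$ and $\varphi_j$. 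Thus the image of $\alpha^*(\Omega^1_{\calA_V}) \to \Omega^1_X(\log D)$ (restricted to $V$, where the log poles disappear) is exactly the subsheaf of $\Omega^1_V$ generated by the global sections of $\Omega^1_X(\log D)$. Since $\Omega^1_X(\log D)$ is globally generated by hypothesis, that subsheaf is all of $\Omega^1_X(\log D)|_V = \Omega^1_V$, giving the desired surjectivity onto $\Omega^1_V$.

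I would then simply apply the previous theorem with this choice of $\alpha$ and $\calA_V$, which yields the statement. Note that almost ampleness is not needed here — only global generation enters — which matches the hypotheses of the corollary. I expect the main obstacle to be stating cleanly the compatibility between the quasi-Albanese construction and the evaluation map $H^0(X,\Omega^1_X(\log D)) \otimes \calO_X \to \Omega^1_X(\log D)$, i.e.\ making precise that $\alpha^*$ of invariant forms recovers a chosen basis of global log-differentials; once that identification is in hand, the surjectivity is formal from global generation, and the rest is a direct citation of the preceding theorem. A secondary subtlety is that the preceding theorem is phrased over a number field and uses Vojta's Theorem \ref{thm:vojta}, so one should make sure $\calA_V$ is defined over $k$ (rather than only after base change) — this is exactly why the construction of the quasi-Albanese over a perfect field, via the dual of the Picard variety, was recalled, and it guarantees descent of the relevant data.
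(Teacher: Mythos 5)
Your proposal is correct and follows essentially the same route as the paper: take $\alpha$ to be the quasi-Albanese map, observe that global generation of $\Omega^1_X(\log D)$ together with the identification of invariant forms on $\calA_V$ with global log-differentials on $(X,D)$ forces $\alpha^*(\Omega^1_{\calA_V}) \to \Omega^1_V$ to be surjective, and then invoke the unnamed theorem immediately preceding the corollary. The only cosmetic difference is that the paper compresses your explicit discussion of the period construction into a single citation of Fujino's Lemma 3.12, which records exactly the identification $H^0(V,\Omega^1_V)\otimes\calO_{\calA_V}\cong\Omega^1_{\calA_V}$ that you re-derive from the definition of $\alpha$ via integrals of a basis of $H^0(X,\Omega^1_X(\log D))$. (You also correctly identify the preceding theorem as the one to apply; the paper's printed text mistakenly cites Lemma~\ref{lem:lem2} here, which is a typo.)
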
 

\begin{proof} Consider the quasi-Albanese map $\alpha: V \to \calA_V$. Since $\logct$ is globally generated and $H^0(V, \Omega^1_V) \otimes \calO_{\calA_V} \cong \Omega^1_{\calA_V}$ by \cite[Lemma 3.12]{fujino}  the map $\alpha^*(\Omega^1_{\calA_V}) \to \Omega^1_V$ is surjective. Therefore applying Lemma \ref{lem:lem2} gives the desired result. \end{proof}

\begin{proof}[Proof 1 of Theorem \ref{thm:main}] For a smooth variety $V$ with log smooth completion $(X,D)$, assuming that $\logct$ is almost ample implies there are no semi-abelian varieties inside $V$ (see Theorem \ref{thm:almostample}). Therefore, the set $V(\calO_S)$ is finite when $\Omega^1_X(\log D)$ is globally generated and almost ample by Corollary \ref{cor:mor}. \end{proof}

We now give a proof that does not use Theorem \ref{thm:almostample}.

\begin{proof}[Proof 2 of Theorem \ref{thm:main}]
Assume that $\overline{V(\calO_S)}$ has an irreducible component $Y$ of dimension $\dim Y \geq 1$. Let $(\overline{Y}, \overline{E})$ denote the completion of $Y$. Note that $\overline{Y}$ is geometrically irreducible. Furthermore, $\Omega^1_X(\log D)\vert_{\overline{Y}}$ is almost ample and globally generated.  Therefore $\Omega^1_{\overline{Y}}(\log \overline{E})$ is almost ample and globally generated as well. By Lemma \ref{lem:lem2}, $q(Y) \geq 2\dim Y$. Therefore, by Proposition \ref{prop:tm1},  $Y(\calO_S)$ is not dense in $Y$, which is a contradiction.
\end{proof}

\section{Vojta's Conjecture}\label{sec:vojta}
The goal of this section is to introduce Vojta's conjecture and the relevant height machinery to   present a result analogous to Theorem \ref{thm:main} in the function field setting. This result gives a height bound for integral points that is predicted by Vojta's main conjecture (Conjecture \ref{conj:Vojta}). We will see in this section that this ``main'' conjecture implies Conjecture \ref{conj:LV1}.

\subsection{Vojta's conjecture and the theory of heights}\label{sec:heights}

We will now recall the basic definition needed to state the main conjecture whose specific reformulation will imply Conjecture \ref{conj:LV1}. The main technical tool is the concept of \emph{height}, that plays a fundamental role in almost all results in Diophantine Geometry. The idea is that a height function measures the ``arithmetic complexity'' of points. In the classical case of $\PP^n$ the \emph{logarithmic height} is defined as
\[
  H(x_0:\dots:x_n) = \max_i (\lvert x_i \rvert)
\]
for a rational point $(x_0:\dots:x_n) \in \PP^n(\Q)$ with integer coordinates without common factors. Weil extended this notion to treat arbitrary height functions on algebraic varieties defined over number fields. In this language, the logarithmic height on $\PP^n$ is the height associated to a hyperplane divisor over $\Q$.

\begin{definition}[Weil's Height Machinery]\label{def:WHeight}
Let $X$ be a smooth projective algebraic variety defined over a number field $k$. There exists a (unique) map
\[
  h_{X,\_} : \Pic(X) \to \{ \text{ functions } X(\kbar) \to \R \}
\]
well-defined up to bounded functions, i.e. modulo $O(1)$, whose image $h_{X,D}$ for a class $D \in \Pic(X)$ is called a \emph{Weil height} associated to $D$. The map $h_{X,\_ }$ satisfies:
\begin{description}
\item[(a)] the map $D \mapsto h_{X,D}$ is an homomorphism modulo $O(1)$;
\item[(b)] if $X = \PP^n$ and $H \in \Pic(\PP^n)$ is the class of some hyperplane in $\PP^n$, then $h_{X,H}$ is the usual \emph{logarithmic height} in the projective space;
\item[(c)] Functoriality: for each $\kbar$-morphism $f : X \to Y$ of varieties and for each $D \in \Pic(Y)$ the following holds:
\[
h_{X, f^*D} = h_{Y, D} + O(1).
\]
\end{description}
\end{definition}

By abuse of notation, for a divisor $D$, we will denote the height corresponding to the class $\calO(D) \in \Pic(X)$ with $h_{X,D}$. The previous definition can be extended to non smooth varieties (even non irreducible ones) and over any field with a set of normalized absolute values which satisfy the product formula, see \cite{Lang1997} for further details. From the previous definition one can show the following properties for the height machinery:

\begin{proposition}[\cite{HindrySilverman},\cite{Lang1997}]\label{prop:propWHeight}
With the above notation, the function $h_{X,\_ }$ satisfies:
\begin{description}
\item[(d)] Let $D$ be an effective divisor in $X$ then, up to bounded functions, $h_{X,D} \geq O(1)$;
\item[(e)] \emph{Northcott's Theorem}: Let $A$ be an ample divisor in $X$ with associated height function $h_{X,A}$ then, for all constants $C_1$, $C_2$ and every extension $k'$ of $k$ with $[ k': k ] \leq C_2$, the following set is finite
\[
\{ P \in X(k') : h_{X,A}(P) \leq C_1\}.
\]
\end{description}
\end{proposition}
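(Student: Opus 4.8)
The plan is to derive both (d) and (e) formally from the three defining properties (a)--(c) of the height machine, together with the classical finiteness theorem of Northcott over $\overline{k}$.

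For (d), I would fix a very ample divisor $H_0$ on $X$ and invoke Serre's theorem to arrange, after replacing $H_0$ by a sufficiently large multiple, that $D+H_0$ is also very ample. Choosing a section $s\in H^0(X,\calO_X(D))$ with $\operatorname{div}(s)=D$, multiplication by $s$ embeds $H^0(X,\calO_X(H_0))$ into $H^0(X,\calO_X(D+H_0))$; extending a basis $t_0,\dots,t_m$ of the former to a basis $st_0,\dots,st_m,u_{m+1},\dots,u_N$ of the latter yields closed immersions $\varphi\colon X\hookrightarrow\mathbb{P}^m$ and $\psi\colon X\hookrightarrow\mathbb{P}^N$ with $\varphi^*\calO(1)\cong\calO_X(H_0)$ and $\psi^*\calO(1)\cong\calO_X(D+H_0)$, compatible with the linear projection $\pi\colon\mathbb{P}^N\dashrightarrow\mathbb{P}^m$ onto the first $m+1$ coordinates: indeed $\pi\circ\psi=\varphi$ wherever $s$ is nonvanishing, i.e.\ on $X\setminus\operatorname{Supp}(D)$. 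Since discarding coordinates can only decrease the height on projective space, one has $h_{\mathbb{P}^m,H}(\pi(Q))\le h_{\mathbb{P}^N,H}(Q)$ on the domain of $\pi$; combining this with functoriality (c) for $\varphi$ and $\psi$ and additivity (a) applied to $D+H_0$ gives, for $P\in X(\overline{k})\setminus\operatorname{Supp}(D)$,
\[
h_{X,H_0}(P)\ \le\ h_{X,D+H_0}(P)+O(1)\ =\ h_{X,D}(P)+h_{X,H_0}(P)+O(1),
\]
and cancelling the finite quantity $h_{X,H_0}(P)$ leaves $h_{X,D}(P)\ge O(1)$ away from $\operatorname{Supp}(D)$, which is (d).

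For (e), since $A$ is ample some multiple $mA$ is very ample, and by additivity $h_{X,mA}=m\,h_{X,A}+O(1)$, so boundedness of $h_{X,A}$ is equivalent to boundedness of $h_{X,mA}$; hence I may assume $A$ very ample. Then $A$ gives a closed immersion $\iota\colon X\hookrightarrow\mathbb{P}^N$ with $\iota^*\calO(1)\cong\calO_X(A)$, and functoriality (c) gives $h_{X,A}=h_{\mathbb{P}^N,H}\circ\iota+O(1)$, so the set in question injects via $\iota$ into $\{Q\in\mathbb{P}^N(k')\colon h_{\mathbb{P}^N,H}(Q)\le C_1'\}$ for a suitable $C_1'$. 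It therefore suffices to prove that $\{Q\in\mathbb{P}^N(\overline{k})\colon [\,\Q(Q):\Q\,]\le D_0,\ h_{\mathbb{P}^N,H}(Q)\le C_1'\}$ is finite, where $D_0=C_2\,[k:\Q]$. Passing to the coordinate ratios of $Q$ in some affine chart (each lies in $\Q(Q)$, has degree $\le D_0$, and has height $\le C_1'+O(1)$) reduces this to the one-variable assertion: there are only finitely many $\alpha\in\overline{k}$ of degree $\le D_0$ and height $\le C$.

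This last, classical statement is the crux, and is where the genuine work lies. I would write the primitive integral minimal polynomial of $\alpha$ as $a_0\prod_i(x-\alpha_i)$ over the conjugates $\alpha_i$ of $\alpha$, all of which share the height of $\alpha$; then, using the identity $h(\alpha)=\tfrac1{\deg\alpha}\log M(\alpha)$ expressing the Weil height as the Mahler measure of this polynomial, together with the standard bound of the coefficients of a polynomial by its Mahler measure, one bounds all the integer coefficients of the minimal polynomial in terms of $e^{D_0 C}$. As there are only finitely many integer polynomials of bounded degree with coefficients so bounded, and each has finitely many roots, finiteness follows. By contrast, everything else --- the reductions from $X$ to $\mathbb{P}^N$ and from $\mathbb{P}^N$ to $\mathbb{P}^1$ in (e), and the entire argument for (d) --- is formal bookkeeping with (a)--(c), Serre vanishing, and the monotonicity of projective heights under coordinate projections.
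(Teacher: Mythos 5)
The paper does not prove Proposition~\ref{prop:propWHeight} --- it is stated as a citation to \cite{HindrySilverman} and \cite{Lang1997} --- so there is no in-text argument to compare against; I am assessing your proof on its own. Your argument is correct and reproduces the standard treatment in those references. For (d): twisting $D$ by a high multiple of a very ample $H_0$ so that $D+H_0$ is again very ample, realizing $|H_0|$ inside $|D+H_0|$ via multiplication by a defining section $s$ of $D$, and using monotonicity of the projective-space height under coordinate projection plus functoriality and additivity is exactly the classical proof; you correctly note the resulting bound holds only off $\operatorname{Supp}(D)$, which is the right formulation of (d). For (e): the reduction from $X$ to $\mathbb{P}^N$ via a very ample multiple of $A$ is routine, the passage from $\mathbb{P}^N$ to the one-variable statement via coordinate ratios (each of which has degree dividing $[\Q(Q):\Q]$ and height bounded by that of $Q$) is correct, and the one-variable Northcott via the Mahler-measure identity $h(\alpha)=\tfrac1{\deg\alpha}\log M(\alpha)$ and the coefficient bound $|a_i|\le\binom{d}{i}M(f)$ is a clean way to finish; the references more commonly bound the elementary symmetric functions of the conjugates directly, but the Mahler-measure route is equivalent and arguably tidier. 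One small terminological quibble: the step producing $D+mH_0$ very ample is Serre's theorem on ampleness (or Cartan--Serre--Grothendieck), not ``Serre vanishing'' per se, though the latter implies it.
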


The second ingredient we need to introduce to formally state Vojta's conjecture is the notion of local height. Morally we want a function which measures the $v$-adic distance from a point to a divisor $D$ and such that a linear combination of these functions when $v$ runs over the set of places gives a Weil height for the divisor $D$. This motivates the following

\begin{definition}[Local Height]\label{def:locHeight}
Let $X$ be a smooth projective variety defined over a number field $k$. Then there exists a map
\[
\lambda_{\_} : \Pic(X) \to \{ \text{ functions } \coprod_{v \in M_k} X \setminus \supp D (k_v) \to \R \}
\]
defined up to $M_k$-bounded functions, i.e. up to constant maps $O_v(1) : M_k \to \R$ that are nonzero for finitely many places $v\in M_k$, such that:

\begin{description}
\item[(a)] $\lambda$ is additive up to $M_k$ bounded functions;
\item[(b)] given a rational function $f$ on $X$ with associated divisor $\divv(f) = D$. Then
\[
\lambda_{D,v} (P) = v(f(P)) 
\]
up to $O_v(1)$, for each $v \in M_k$ where $P \in U \subset X \setminus \supp D (k_v)$ with $U$ affine and $\max \lvert P \rvert_v = 0$ for all but finitely many $v$;
\item[(c)] Functoriality: for each $\kbar$-morphism $g : X \to Y$ of varieties and for each $D \in \Pic(Y)$ the following holds:
\[
\lambda_{g^* D,v} = \lambda_{D,v} \circ g + O_v(1);
\]
\item[(d)] if $D$ is an effective divisor then $\lambda_{D,v} \geq O_v(1)$;
\item[(e)] if $h_D$ is a Weil height for $D$ then
\[
h_D(P) = \sum_{v \in M_k} d_v \lambda_{D,v} (P) + O(1)
\]
for all $P \notin \supp D$, with $d_v = [ k_v : \Q_v ] / [ k : \Q ]$.
\end{description}
\end{definition}

For the detailed construction and related properties of local height we refer to \cite{Lang1997} and \cite{SerreMW}.
One intuition behind the work of Vojta, was the fact that local heights are arithmetic counterparts of proximity functions in value distribution theory: to see this consider a metrized line bundle $\sL$ with a section $s$ and metric $\lvert \cdot \rvert_v$: the function $P \mapsto \log \lvert s(P) \rvert_v$ is a local height at $v$. Following Vojta \cite{vojtabook} one can introduce arithmetic proximity and counting functions for algebraic varieties over number fields in the same spirit. 

\begin{definition}\label{def:mNfunct}
Let $S$ be a finite set of places of $k$, and let $(X,D)$ be a pair defined over $k$. Then the following functions are well defined:
\begin{align*}
m_{S,D} (P) &= \sum_{v \in S} d_v \lambda_{D,v}(P) \\
N_{S,D} (P) &= \sum_{v \notin S} d_v \lambda_{D,v}(P).
\end{align*}
called the \emph{arithmetic proximity function} and \emph{arithmetic counting function} respectively. By definition,
\[
h_D (P) = N_{S,D} (P) + m_{S,D} (P).
\]
\end{definition}

With these definitions we can now state the main Vojta conjecture which translates Griffiths' conjectural ``Second Main Theorem'' in  value distribution theory.

\begin{conjecture}[Vojta]\label{conj:Vojta}
Let $X$ be a smooth irreducible projective variety defined over a number field $k$ and let $S$ be a finite set of places of $k$. Let $D$ be a normal crossing divisor and $A$ an ample divisor on $X$. Then for every $\epsilon > 0$ there exists a proper closed subset $Z$ such that, for all $P \in X(k) \setminus Z$,
\begin{equation*}%\label{eq:Vojta}
m_{S,D} (P) + h_{K_X} (P) \leq \epsilon h_A (P) + O(1).
\end{equation*}
\end{conjecture}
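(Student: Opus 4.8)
This is Vojta's general conjecture, open in essentially every case with $\dim X\geq 2$ and already of $abc$-strength for $(\PP^1,\{0,1,\infty\})$, so what follows is a plan of attack and an account of where it stalls rather than a complete argument. Under Vojta's dictionary the statement is the number-theoretic transcription of Griffiths' conjectural Second Main Theorem of Nevanlinna theory: an infinite sequence of $k$-points plays the role of an entire curve $f\colon\bC\to X$, the height $h_A$ of the characteristic function $T_f$, and $m_{S,D}$ of the proximity term $m_f(D,-)$. The first route I would pursue is to transport the engine of the analytic proof — the logarithmic derivative lemma — to the arithmetic side, proving an \emph{arithmetic logarithmic derivative lemma} bounding the contribution of Wronskians of sections vanishing along $D$, and coupling it with an auxiliary construction forcing high vanishing at the sequence of points (the analogue of jet-differential or negative-curvature arguments), so that $Z$ is carved out by the degeneracy locus of the construction. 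The obstacle is precisely that no arithmetic logarithmic derivative estimate is known in the generality required, and the unconditional constructions of auxiliary jet differentials now available (behind Debarre's conjecture and the Brotbek--Darondeau theorem) do not come with the kind of height control one would need here.

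A second, more tractable route reduces everything to the Subspace Theorem. For $X=\PP^n$ and $D$ a union of hyperplanes in general position, Conjecture~\ref{conj:Vojta} is precisely Schmidt's Subspace Theorem, with $Z$ a finite union of linear subspaces; and the Corvaja--Zannier--Levin--Autissier circle of ideas, systematized by Ru--Vojta through a Seshadri-type $\beta$-invariant, shows how to leverage it when $D$ has enough components or, more generally, when one has a filtration of $H^0(X,mL)$ adapted to the orders of vanishing along the components of $D$. The step I would carry out is the construction of such a filtration at each $v\in S$, together with the estimate that the total expected vanishing exceeds $(1+\delta)$ times the generic vanishing — this is where bigness of $K_X+D$ must enter, through a Riemann--Roch/Hilbert-polynomial computation — and then feeding the resulting system of sections into the Subspace Theorem and reading $Z$ off the exceptional subspaces. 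This closes the argument for special $(X,D)$, those for which sections with the prescribed vanishing genuinely exist and the $\beta$-invariant beats the naive bound, but in general one cannot produce enough such sections, which is the crux of why the conjecture is open.

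The genuinely hard point is structural: the $\epsilon\,h_A(P)$ error encodes a $1+\epsilon$ exponent, so any proof must be of Thue--Siegel--Roth--Schmidt type — built from an auxiliary polynomial or jet differential with controlled index — rather than of Baker type, and no mechanism producing such an auxiliary object for arbitrary $X$ is known; relatedly, such proofs are intrinsically ineffective. What one \emph{can} establish unconditionally, and what these notes use, is the qualitative consequence Conjecture~\ref{conj:LV1} in the presence of a suitable map to a semi-abelian variety: combining the quasi-Albanese construction with Vojta's Theorem~\ref{thm:vojta} yields non-density, hence — under the positivity hypotheses of Theorem~\ref{thm:main} — finiteness. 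For the full height inequality of Conjecture~\ref{conj:Vojta}, absent a new idea on the analytic side (an arithmetic Second Main Theorem) or the algebraic side, the plan does not go through.
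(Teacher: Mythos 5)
You have correctly identified that the statement labeled Conjecture~\ref{conj:Vojta} is precisely that --- a conjecture, not a theorem --- and the paper offers no proof of it (nor claims to). It is stated as the organizing principle of \S\ref{sec:vojta} and is only ever \emph{used} as a hypothesis, in Propositions~\ref{prop:V->BL} and~\ref{prop:V->LV}, to derive the Bombieri--Lang and Lang--Vojta conjectures; the derivations there are short applications of Kodaira's lemma, additivity of heights, positivity of heights attached to effective divisors, and Northcott's theorem, not a proof of the conjecture itself. Your refusal to fabricate a proof, together with your accurate placement of the statement in Vojta's Nevanlinna-theoretic dictionary, the discussion of the missing arithmetic logarithmic derivative lemma, the Schmidt Subspace Theorem / Ru--Vojta $\beta$-invariant route for special $(X,D)$, the Thue--Siegel--Roth--Schmidt (as opposed to Baker) structural constraint imposed by the $\epsilon\,h_A$ error term, and the observation that the paper's actual unconditional input is Vojta's semi-abelian theorem via the quasi-Albanese map, is the correct and well-informed response. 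No gap to report: there is nothing to prove here, and you said so.
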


Vojta's main conjecture \ref{conj:Vojta} is known to imply most of the open conjectures and fundamental theorems of Diophantine Geometry (Masser-Osterl\'e abc conjecture, Faltings' Theorem, \dots). 

We end this section by two propositions which show how the above stated conjectured implies the Bombieri-Lang conjecture \ref{conj:lang} and the Lang-Vojta conjecture \ref{conj:LV1}. For other implications and discussions we refer the interested reader to \cite{vojtabook} or \cite{rubook}.

\begin{remark} We recall that by Remark \ref{rmk:big} (Kodaira's Lemma), a big divisor $D$ has a positive multiple that can be written as the sum of an ample and effective divisor. In the following proofs we will always assume that this multiple is the divisor itself for simplifying the notation, as this can be done without loss of generality. \end{remark}

%\newpage

\begin{proposition}\label{prop:V->BL}
Vojta conjecture \ref{conj:Vojta} implies Bombieri-Lang conjecture \ref{conj:lang}.
\begin{proof}
If $X$ is of general type then $K_X$ is big, i.e. there exists a positive integer $n$ such that $nK_X = B + E$ with $B$ ample and $E$ effective, and we will assume $n=1$. Now Conjecture \ref{conj:Vojta} with $D = 0$ and $A = B$ gives
\[
(1-\epsilon) h_B(P) + h_E(P) \leq O(1).
\]
By Proposition \ref{prop:propWHeight}, $h_E(P) \geq 0$ and hence, by Northcott's Theorem \ref{prop:propWHeight}(e), the set $X(K)$ is not Zariski-dense in $X$.
\end{proof}
\end{proposition}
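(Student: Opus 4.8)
The plan is to pit the height machine against the bigness of $K_X$, in direct analogy with the way Northcott's theorem forces finiteness of rational points on curves of genus $\geq 2$. First I would invoke Kodaira's Lemma (Remark \ref{rmk:big}): since $X$ is of general type, $K_X$ is big, so some positive multiple $nK_X$ is linearly equivalent to $B + E$ with $B$ ample and $E$ effective. Following the convention recalled in the remark just preceding the statement, I would replace $K_X$ by this multiple and simply assume $n = 1$, so that $K_X \sim B + E$.

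Next I would feed Conjecture \ref{conj:Vojta} its simplest possible input: take the boundary divisor to be $D = 0$ and the ample divisor to be $A = B$. This yields, for each $\epsilon > 0$, a proper Zariski-closed subset $Z \subsetneq X$ such that $h_{K_X}(P) \leq \epsilon\, h_B(P) + O(1)$ for every $P \in X(k) \setminus Z$. Using additivity of the height machine (property (a) of Definition \ref{def:WHeight}) together with $K_X \sim B + E$, I would rewrite this as $h_B(P) + h_E(P) \leq \epsilon\, h_B(P) + O(1)$, and then discard the term $h_E$, which is bounded below by $O(1)$ because $E$ is effective (Proposition \ref{prop:propWHeight}(d)). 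What remains is the inequality $(1 - \epsilon)\, h_B(P) \leq O(1)$, valid on all of $X(k) \setminus Z$.

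To finish I would fix one value of $\epsilon$ with $0 < \epsilon < 1$; then $h_B$ is bounded above by an absolute constant on $X(k) \setminus Z$. Since $B$ is ample, Northcott's Theorem (Proposition \ref{prop:propWHeight}(e)), applied over the fixed field $k$, shows that this set is finite. Hence $X(k)$ is contained in the union of the proper closed set $Z$ with a finite set, so it is not Zariski dense, which is exactly Conjecture \ref{conj:lang}.

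There is no genuinely hard step here: the argument is essentially bookkeeping with $O(1)$'s. The points that deserve a moment of care are that the exceptional set $Z$ furnished by Vojta's conjecture depends on $\epsilon$ but we only need it for a single choice $\epsilon < 1$; that passing from $K_X$ to a multiple is harmless (as flagged in the preceding remark); and that Northcott is being applied to $k$-rational points rather than to points of unbounded degree. Finally, I would note that running the identical argument with a normal crossings boundary $D$ and with $K_X$ replaced by the log canonical divisor $K_X + D$ (using the local-height and counting-function apparatus to interpret $m_{S,D}$) gives the companion statement that Conjecture \ref{conj:Vojta} implies the Lang-Vojta Conjecture \ref{conj:LV1} for $S$-integral points.
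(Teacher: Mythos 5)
Your proof is correct and follows the paper's argument essentially verbatim: Kodaira's lemma to write $K_X \sim B+E$, Vojta with $D=0$ and $A=B$, discard $h_E\ge O(1)$, then Northcott on the ample $B$. You are a bit more explicit than the paper about the role of the exceptional set $Z$ and about fixing a single $\epsilon < 1$, which is helpful but not a different route.
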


In order to prove that Vojta conjecture is stronger than the Lang-Vojta conjecture we need the following reformulation of the property of being $S$-integral in terms of the functions defined in Definition \ref{def:mNfunct}: a point $P$ is $S$-integral if $N_{S,D}(P) = O(1)$ and in particular $m_{S,D}(P) = h_D(P) + O(1)$. 
%%
%\begin{definition}\label{def:logGT}
%Let $X$ be a smooth projective variety and $D$ a normal crossing divisor on $X$. $X$ is said to be of \emph{logarithmic general type}, or log-general type, if $K_X + D$ is big for a canonical divisor $K_X$ of $X$.
%\end{definition}
%
Using the characterization of bigness mentioned above (Remark \ref{rmk:big}), we prove the following.

\begin{proposition}\label{prop:V->LV}
Vojta's conjecture \ref{conj:Vojta} implies the Lang-Vojta conjecture \ref{conj:LV1}.
\begin{proof}
For a log general type variety $(X,D)$ one has
\[
K_X + D = B + E,
\]
for $B$ ample and $E$ effective. Hence Vojta's conjecture with $A = B$ gives, for $S$-integral points,
\[
(1 -\epsilon) h_B (P) + h_E(P) \leq O(1).
\]
As before, $h_E(P) \geq 0$; thus, using Northcott's Theorem, the set of $S$-integral points of $(X,D)$ is not Zariski dense.
\end{proof}
\end{proposition}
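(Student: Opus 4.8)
The plan is to combine three ingredients: the characterization of log general type via Kodaira's lemma, the dictionary between $S$-integrality and the arithmetic counting function $N_{S,D}$, and Northcott's finiteness theorem.

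First I would use that $(X,D)$ of log general type means $K_X+D$ is big, so by Kodaira's lemma (Remark~\ref{rmk:big}) some positive multiple of it decomposes as a sum of an ample and an effective divisor; as in the remark preceding Proposition~\ref{prop:V->BL} we may harmlessly take the multiple to be $1$ and write $K_X+D = B + E$ with $B$ ample and $E$ effective. Next I would invoke the reformulation of $S$-integrality recorded just before this proposition: a point $P$ is $S$-integral for $(X,D)$ precisely when $N_{S,D}(P)=O(1)$, and hence, since $h_D = N_{S,D} + m_{S,D}$, one has $m_{S,D}(P) = h_D(P) + O(1)$ for all $S$-integral points $P$.

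Then I would apply Vojta's conjecture (Conjecture~\ref{conj:Vojta}) to the pair $(X,D)$ with ample divisor $A=B$ and a fixed $\epsilon\in(0,1)$: this yields a proper closed subset $Z\subsetneq X$ such that $m_{S,D}(P) + h_{K_X}(P) \le \epsilon\, h_B(P) + O(1)$ for all $P\in X(k)\setminus Z$. Restricting to $S$-integral points $P\notin Z$ and substituting $m_{S,D}(P) = h_D(P)+O(1)$, additivity of the Weil height machine (Definition~\ref{def:WHeight}(a)) gives $h_D(P)+h_{K_X}(P) = h_{K_X+D}(P)+O(1) = h_B(P) + h_E(P) + O(1)$, whence $(1-\epsilon)\,h_B(P) + h_E(P) \le O(1)$. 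Since $E$ is effective, $h_E \ge O(1)$ by Proposition~\ref{prop:propWHeight}(d), so $h_B(P) \le O(1)$ on this set; as $B$ is ample, Northcott's theorem (Proposition~\ref{prop:propWHeight}(e)) forces the $S$-integral points lying outside $Z$ to be finite in number. Hence the full set of $S$-integral points of $(X,D)$ is contained in the union of $Z$ with a finite set, and in particular is not Zariski dense, which is Conjecture~\ref{conj:LV1}.

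As for where the real content lies: the height-theoretic manipulations above are essentially bookkeeping, and the one step that genuinely requires care is the passage from the scheme-theoretic definition of $S$-integral points (sections of $\calX\setminus\calD$ over $\Spec\calO_{k,S}$) to the analytic statement $N_{S,D}(P)=O(1)$. This is the place where one must know that a uniform choice of model together with the local height functions $\lambda_{D,v}$ controls exactly the primes of non-integrality; I would treat this as the main technical point, either citing it from \cite{vojtabook} or spelling it out via the interpretation of $\lambda_{D,v}(P)$ as essentially the order of vanishing of a local defining equation of $\calD$ along the section $\sigma_P$ at $v$.
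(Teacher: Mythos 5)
Your proof is correct and follows essentially the same route as the paper's: decompose $K_X+D=B+E$ via Kodaira's lemma, plug into Vojta with $A=B$, use the $S$-integrality identity $m_{S,D}=h_D+O(1)$ to arrive at $(1-\epsilon)h_B(P)+h_E(P)\le O(1)$ outside a proper $Z$, then invoke effectivity of $E$ and Northcott. You simply spell out the intermediate height bookkeeping (substituting $m_{S,D}$ and regrouping $h_D+h_{K_X}=h_{K_X+D}$) that the paper leaves implicit, and you flag the model-dependence in the passage from scheme-theoretic integrality to $N_{S,D}(P)=O(1)$, which is a reasonable point of care that the paper glosses over.
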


%
%
%
%
%
%
%
%

%Paul Vojta's in his Ph.D. thesis, developed a systematic analysis of analogies between Nevanlinna Theory and Diophantine approximation over number fields \cite{vojtathesis}. Based on this analogy, Vojta formulated a set of far-reaching conjectures. 
%\begin{conjecture}[Vojta's conjecture]
%Let $X$ be a smooth algebraic variety over $k$, let $D$ be a simple normal crossings divisor on $X$, and let $H$ be an ample divisor on $X$. Consider Weil height functions $h_H$ and $h_{K{X}}$ and for each place $v$ choose a local height function $\lambda_{D,v}$. Fix a finite set of places $S$ and let $\epsilon > 0$. Then there exists a non-empty Zariski open subset $U \subseteq X$ (depending on $X, D, H, k, \epsilon, S$), such that:
%
%\[ \displaystyle \sum_{v \in S} \lambda_{D, v}(P) + h_{K_X}(P) \leq \epsilon h_{H}(P) + O(1), \]
%for all $P \in U(k)$. 
%\end{conjecture}

\section{Function Fields}\label{sec:ff}

Function fields in one variable and number fields share several properties. This deep analogy was observed in the second half of the 19th century; one of the first systematic treatments can be found in the famous paper by Dedekind and Weber \cite{Dedekind}. Further descriptions, due to Kronecker, Weil and van der Waerden, settled this profound connection which finally became formally completed with the scheme theory developed by Grothendieck.

\begin{definition}[Function Field]\label{def:FunctField}
A \emph{function field} $F$ over an algebraically closed field $k$ is a finitely generated field extension of finite transcendence degree over $k$. A function field \emph{in one variable}, or equivalently a function field of an algebraic curve, is a function field with transcendence degree equal to one
%, i.e. a field $F/k$ for which it exists an element $x$, transcendental over $k$, such that $K$ is a finite extension of $k(x)$.
\end{definition}

\begin{remark}
With the language of schemes the function field of a curve $X$, or more general of every integral scheme over an algebraic closed field, can be recovered from the structure sheaf $\calO_X$ in the following way: given any affine open subset of $X$, the function field of $X$ is the fraction field of $\calO_X(V)$. Moreover, if $\eta$ is the (unique) generic point of $X$, then the function field of $X$ is also isomorphic to the stalk $\calO_{X,\eta}$.
\end{remark}

The analogy between number fields and function fields of curves, also known as algebraic function fields in one variable, comes from the fact that one-dimensional affine integral regular schemes are either smooth affine curves over a field $k$ or an open subset of the spectrum of the ring of integers of a number field. Formally, given a number field $k$ with ring of integers $\calO_k$ the scheme $\Spec\calO_k$ is one dimensional affine and integral. From this analogy, several classical properties of number fields find an analogue in the theory of function field. In particular the theory of  heights can be defined over function fields.

\begin{definition}\label{def:FFabs}
Given a function field $F$ in one variable of a non singular curve $\calC$, each (geometric) point $P \in \calC$ determines a non trivial absolute value by
\[
\lvert f \rvert_{P}:= e^{-\ord_P(f)}.
\]
Moreover if $Q \neq P$ then the absolute values $\lvert \cdot \rvert_{Q}$ and $\lvert \cdot \rvert_{P}$ are not equivalent.
\end{definition}

\begin{remark}\leavevmode
\begin{itemize}
\item The definition could have been given more generally for function fields of algebraic varieties regular in codimension one (or rather for regular models of higher dimensional function fields), replacing the point $P$ with prime divisors. Extensions exist also for function fields over non-algebraically closed fields in which one should replace points with orbits under the absolute Galois group.
\item From the fact that any rational function $f$ on a projective curve has an associated divisor of degree zero, it follows that the set of absolute values satisfy the product formula.
\end{itemize}
\end{remark}

Given the set of absolute values $M_F$ for a function field in one variable $F$, normalized in such a way that they satisfy the product formula, heights can be defined for $F$ in the following way:

\begin{definition}\label{def:heightFF}
Let $F=k(\calC)$ be as before. For any $f \in F$ the \emph{height} of $f$ is
\[
h(f) = - \sum_{P \in \calC} \min \{ 0, \ord_P(f)\} = \sum_{P \in \calC} \max \{ 0, \ord_P(f)\}.
\]
In the same way for a point $g \in \PP^n(F)$, $g = (f_0 : \dots : f_n)$, its height is defined as
\[
h(g) = - \sum_{P \in \calC} \min_i \{ \ord_P(f_i) \}.
\]
\end{definition}

From the definition it follows that a rational function on a regular curve has no poles if and only if its height is zero if and only if it is constant on the curve.

We end this subsection with Table \ref{tab:NFFF}, which illustrates the interplay and the similarity between number fields and function fields. We stress in particular how each geometric object in the right column, in particular dominant maps and pullbacks, are analogous to purely arithmetic notions like extensions of fields and extensions of ideals. This analogy can be further explored using Arakelov Theory and extending the notion of divisors to number fields by suitably compactifying the affine curve $\Spec \calO_{k,S}$; in this framework an intersection theory can be defined for such compactified divisors sharing many analogous properties of intersection theory on the geometric side. We refer to \cite{Lang1988} for further details on this subject. 

\begin{table}[h!]
\centering
\begin{tabular}{|c|c|}
\hline 
Number Field & Function Field \\ 
\hline 
$\Z$ & $k [ x ]$	 \\
\hline 
$\Q$ & $k (x)$ \\ 
\hline 
$\Q_p$ & $k((x))$ \\ 
\hline 
$k$ finite extension of $\Q$ & $F$ function field of $\calC$ \\ 
\hline 
place & geometric point \\ 
\hline 
finite set of places & finite set of points \\ 
\hline 
ring of $S$-integers & ring of regular functions\\ 
\hline 
$\Spec \calO_{k,S}$ & Affine curve $\calC \setminus S$  \\ 
\hline 
product formula & $\deg$ principal divisor = 0 \\ 
\hline 
extension of number fields & dominant maps \\ 
\hline 
extension of ideals & pullback of divisors \\
\hline 
\end{tabular}
\caption{Number Fields and Function Fields analogy}\label{tab:NFFF} 
\end{table}
%%%%%%%%%%%%%%%%%%%%%%%%%%%%%%%%%%%%%%%%%
\subsection{Mordell Conjecture for function fields}\label{subsec:mordell_ff}
%%%%%%%%%%%%%%%%%%%%%%%%%%%%%%%%%%%%%%%%%
Over function fields one cannot expect Faltings' Theorem \ref{thm:faltings} to hold as shown by the following examples.

\begin{example}
Let $\calC$ be a curve with $g(\calC) > 1$  defined over $\C$ and consider the trivial family $\calC \times \PP^1 \to \PP^1$. The family can be viewed as the curve $\calC$ (trivially) defined over the function field $\C( t)$ of $\PP^1$. All fibers of the family, being isomorphic to $\calC$ have genus greater than one. The Mordell Conjecture over function fields, without any other restriction, should imply that the set of $\C( t )$-rational points of $\calC$, i.e. there are finitely many sections $\PP^1 \to \calC \times \PP^1$. However this is easily seen to be false by considering constant sections $\PP^1 \to \{ P \} \times \PP^1$ for each point $P \in \calC(\C)$. In particular, the general type curve $\calC$ defined over $\C( t )$ has infinitely many $\C( t )$-rational points.
\end{example}

From the previous example one could guess that the problem relied on the fact that the family was a product and the curve $\calC$ was actually defined over the base field $\C$ rather than on the function field $\C ( t )$, i.e. the family was trivial. However, as the following example shows, things can go wrong even for non-trivial families.

\begin{example}[Gasbarri \cite{Gasbarrinotes}]
Consider the curve $\calC := (x + t y)^4 + y^4 -1$ defined over $\C( t )$. It has an associated fibration $\calC \to \PP^1$ whose generic fiber $\calC_{t_0} = (x + t_0 y)^4 + y^4 -1$ is a smooth projective curve of genus $g(\calC_{t_0}) = 3$. Again if we consider the same statement of Theorem \ref{thm:faltings} only replacing the number field with the function field $\C( t )$ we would expect that the number of $\C( t )$-rational points of $\calC$ to be finite. However we claim that $\calC(\C( t ))$ is infinite; to see this consider the equation $\alpha^4 + \beta^4 = 1$ over $\C^2$: it has infinitely many solutions. Each solution gives a $\C$-point of $\calC_{t_0}$, namely $(\alpha - t_0 \beta, \beta)$ proving the claim. Moreover the family is not trivial in the sense of the previous example, i.e. $\calC$ is not defined over $\C$. Notice however that each fiber of the family is isomorphic to the curve $x^4 + y^4 = 1$ via $x+t y \mapsto x$ and $y \mapsto y$.
\end{example}

Motivated by the previous examples we give the following:

\begin{definition}\label{def:isotrivial}
Given a family of irreducible, smooth projective curves $\calC \to \calB$ over a smooth base $\calB$, we say that the family is \emph{isotrivial} if $\calC_b$ is isomorphic to a fixed curve $\calC_0$ for $b$ in an open dense subset of $\calB$. With abuse of notation, we will say that a curve $\calC$ defined over a function field $F$ is \emph{isotrivial} if the corresponding fibration $\calC \to \calB$ is isotrivial, where $\calB$ is a curve with function field $F$.
\end{definition}

Isotriviality extends the notion of (birational) triviality for family of curves, i.e. a product of curves fibered over one of the factors is immediately isotrivial. At the same time this notion encompasses many other families that are not products, like the one defined in the previous example. However, after a cover of the base of the family, each isotrivial family becomes trivial; in particular the following easy lemma holds:

\begin{lemma}\label{lem:isotr->tr}
Given a isotrivial family $\calC \to \calB$ of smooth projective irreducible curves, there exists a cover $\calB' \to \calB$ such that the base changed family $\calC \times_\calB \calB' \to \calB'$ is a generically trivial family, i.e. is birational to a product $\calC \times_\C \calB_0$.
\end{lemma}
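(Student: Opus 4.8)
The plan is to reduce the statement to the familiar fact that, over an algebraically closed field, a family of curves all isomorphic to a fixed $\calC_0$ is classified by a cocycle in $\Aut(\calC_0)$, and to trivialize this cocycle after a finite base change. First I would restrict to the dense open subset $\calB^\circ \subseteq \calB$ over which the fibers $\calC_b$ are isomorphic to $\calC_0$; since we only need a \emph{generically} trivial family, it costs nothing to shrink $\calB$ to $\calB^\circ$, and we may further shrink so that the morphism $\calC \to \calB$ is smooth and projective with all geometric fibers isomorphic to $\calC_0$. The case $g(\calC_0) \geq 2$ is cleanest: then $\underline{\Aut}(\calC_0)$ is a finite group $G$, and the relative automorphism-torsor construction (or equivalently descent theory for the $G$-Galois problem) shows that $\calC \to \calB$ is classified by a morphism $\calB^\circ \to BG$, i.e. by a connected finite \'etale cover $\calB' \to \calB^\circ$ together with a $G$-action, such that $\calC \times_{\calB^\circ} \calB' \cong (\calC_0 \times \calB')/G$-twist becomes the constant family $\calC_0 \times \calB'$ — more concretely, the $\Isom_{\calB^\circ}(\calC_0 \times \calB^\circ, \calC)$-scheme is a $G$-torsor over $\calB^\circ$, hence represented by such a $\calB' \to \calB^\circ$, and pulling back along $\calB' \to \calB^\circ$ splits the torsor, yielding a section, i.e. an isomorphism $\calC_0 \times_\C \calB' \xrightarrow{\sim} \calC \times_{\calB^\circ} \calB'$.

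For the remaining low-genus cases one argues similarly but must account for the positive-dimensional automorphism group: when $g(\calC_0) = 1$, after fixing an origin $\calC_0$ becomes an elliptic curve $E$ and $\Isom_{\calB^\circ}(E \times \calB^\circ, \calC)$ is a torsor under the group scheme $\underline{\Aut}(E) \ltimes E$; when $g(\calC_0)=0$ it is a torsor under $\PGL_2$. In each case the isom-scheme is smooth over $\calB^\circ$ with connected fibers (or a torsor under a connected, resp. finite-by-connected, group), so it admits a section after a finite cover $\calB' \to \calB^\circ$ — here one uses that a torsor under a (special) linear group over a curve is generically trivial, or simply passes to the generic point, picks a point of the torsor over a finite extension of the function field, and spreads it out. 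Composing $\calB' \to \calB^\circ \hookrightarrow \calB$ (completing $\calB'$ to a smooth projective curve if desired) gives the required cover, and the section gives the birational isomorphism $\calC \times_\calB \calB' \sim \calC_0 \times_\C \calB_0$ with $\calB_0 = \calB'$.

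The main obstacle is making the cocycle/torsor argument rigorous without invoking heavy stack-theoretic machinery — in particular, verifying that $\Isom_{\calB^\circ}(\calC_0 \times \calB^\circ, \calC)$ is indeed a torsor (representability, that it is faithfully flat over $\calB^\circ$) and that such a torsor acquires a section after a finite cover of $\calB^\circ$. In the $g \geq 2$ case this is elementary (finite \'etale covers), which is why I would present that case in full and merely indicate the modifications for $g \leq 1$; since the lemma is used only for curves of general type in the sequel (see the surrounding discussion of Mordell over function fields), the case $g(\calC_0) \geq 2$ is really all that is needed, and one could legitimately state and prove the lemma in that generality alone. A mild secondary point is birational-versus-biregular bookkeeping: passing between $\calB^\circ$ and a smooth projective model of $\calB'$, and between ``trivial'' and ``generically trivial,'' which is harmless since the statement only claims birational triviality.
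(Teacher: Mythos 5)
The paper itself gives no proof of Lemma \ref{lem:isotr->tr} --- it is stated as an ``easy lemma'' without justification --- so your proposal is supplying an argument rather than reconstructing one. On the merits, your approach is correct. The cleanest formulation of the uniform idea, which you already reach at the end of the low-genus discussion, is this: shrink to a dense open $\calB^\circ$ over which $\calC \to \calB$ is smooth and projective with every fiber $\C$-isomorphic to $\calC_0$, form the scheme $I = \Isom_{\calB^\circ}(\calC_0 \times \calB^\circ, \calC)$ (representable as an open subscheme of a relative Hilbert scheme), observe that $I \to \calB^\circ$ has nonempty fibers, and therefore that its generic fiber $I_\eta$ is a nonempty finite-type scheme over $K = k(\calB)$. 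A closed point of $I_\eta$ has residue field a finite extension $L/K$; take $\calB'$ to be (the smooth projective model of) the normalization of $\calB$ in $L$. The resulting $L$-point of $I$ is an isomorphism $(\calC_0)_L \xrightarrow{\sim} \calC_L$, which spreads out to a birational isomorphism $\calC_0 \times_{\C} \calB' \dashrightarrow \calC \times_\calB \calB'$. This works for every genus and requires no case analysis, no torsor machinery, and no Tsen-type input.

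Two small cautions on your version as written. First, in the $g \leq 1$ paragraph you invoke that ``a torsor under a (special) linear group over a curve is generically trivial''; $\PGL_2$ is not a special group in Serre's sense, and the elliptic-curve automorphism group $E \rtimes \Aut(E,0)$ is not linear, so that phrasing is slightly off --- but it is immediately repaired by the generic-point argument you give in the same sentence, which is the one that actually does the work. Second, the torsor structure for $g \geq 2$ requires observing that $\underline{\Aut}(\calC_0)$ is finite étale and that $I \to \calB^\circ$ is unramified with constant nonempty fibers and hence finite étale; this is standard but worth a sentence if you want that route to stand on its own. Given the context in the paper (the lemma is applied only to curves of genus $\geq 2$), either restricting to that case, as you suggest, or giving the short generic-point argument for all genera, would be an appropriate level of rigor for what the authors call an ``easy lemma.''
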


Lemma \ref{lem:isotr->tr} implies that rational points for curves defined over function fields will not be finite for isotrivial curves. The analogous form of Mordell Conjecture for function fields thus asks whether this holds only for this class of curves. We can then restate Theorem \ref{thm:faltings} in the following way:

\begin{theorem}\label{thm:geo_mordell}
Let $\calC$ be a smooth projective curve defined over a function field $F$ of genus $g(\calC)>1$. If $\calC(F)$ is infinite then $\calC$ is isotrivial.
\end{theorem}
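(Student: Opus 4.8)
The plan is to reformulate the statement in terms of fibred surfaces and prove the contrapositive: if the family associated to $\calC/F$ is not isotrivial, then it has only finitely many sections. First I would fix a smooth projective model $\calB/k$ with $k(\calB) = F$ and spread $\calC$ out over $\calB$: after resolving singularities and passing to the relatively minimal model one obtains a fibration $f\colon \calX \to \calB$ from a smooth projective surface whose generic fibre is $\calC$, smooth of genus $g = g(\calC) \ge 2$ away from a finite set $S \subset \calB$. Under this dictionary $\calC(F)$ is identified with the set of sections $\sigma\colon \calB \to \calX$ of $f$, and by Lemma~\ref{lem:isotr->tr} the family is isotrivial precisely when $\calX$ becomes birational to a product of curves after a finite base change. (Throughout I would assume $\Char k = 0$; the positive characteristic version, which must additionally rule out families isotrivial only after an inseparable base change, needs the extra arguments of Samuel and Voloch, and I would include it only as a remark.)

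The crucial input is a bound on the ``height'' of sections that does \emph{not} hold in the isotrivial case --- this is the function-field incarnation of the Parshin--Arakelov theorem, and it is the step I expect to be the main obstacle, since it is where all the genuine content of the Mordell conjecture resides. For a section with image $C = \sigma(\calB) \subset \calX$, adjunction on $\calX$ together with $\omega_{\calX/\calB} = \omega_\calX \otimes f^*\omega_\calB^{-1}$ yields the identity $C^2 = -\deg\big(\sigma^*\omega_{\calX/\calB}\big)$, so that the quantity $d(\sigma) := \deg\big(\sigma^*\omega_{\calX/\calB}\big) = \omega_{\calX/\calB}\cdot C$ simultaneously controls the self-intersection of $C$. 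Since $f$ is relatively minimal with fibre genus $\ge 2$, Arakelov's positivity theorem gives that $\omega_{\calX/\calB}$ is nef, whence $d(\sigma) \ge 0$; and when $f$ is \emph{not} isotrivial, $\omega_{\calX/\calB}$ is moreover big and one has the Arakelov inequality $d(\sigma) \le c$ for a constant $c = c(g, g(\calB), \#S)$ independent of $\sigma$. I would prove this inequality using positivity of the Hodge bundle $f_*\omega_{\calX/\calB}$ together with the non-vanishing of the Kodaira--Spencer class of a non-isotrivial family; equivalently, it is the function-field height inequality attached to the fact that $\overline{\mathcal M}_{g,1}$ is of log general type and the section defines a non-constant curve in it lying over the modular map $\calB \to \overline{\mathcal M}_g$.

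With the height bound in hand I would deduce finiteness in two moves. First, rigidity of de Franchis--Severi type: the normal bundle of a section satisfies $N_{C/\calX} \cong \sigma^*T_{\calX/\calB}$, a line bundle on $C \cong \calB$ of degree $-d(\sigma) \le 0$; if $d(\sigma) \ge 1$ then $H^0(C, N_{C/\calX}) = 0$, so $C$ is an isolated (reduced) point of the Hilbert scheme of $\calX$, while if $d(\sigma) = 0$ and $C$ moved in a positive-dimensional family its members would sweep out $\calX$ and produce a second fibration transverse to $f$, forcing $\calX$ birational to a product and $f$ isotrivial, against our assumption --- so in the non-isotrivial case every section is rigid. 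Second, boundedness: a standard intersection-theoretic argument (Hodge index applied to the classes $\omega_{\calX/\calB}$, the fibre $F$, and $C$, using that $\omega_{\calX/\calB}$ is nef and big) shows that a bound on $d(\sigma)$ forces the numerical class of $C$, and hence its degree with respect to a fixed polarization, to lie in a finite set; since all sections have the same arithmetic genus $g(\calB)$, they fall into finitely many components of the Hilbert scheme, each of finite type, and being isolated points there they are finite in number. This contradicts $\calC(F)$ being infinite and proves the theorem.

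I should note an alternative, more arithmetic route due to Manin and Grauert which bypasses Arakelov theory: one embeds $\calC$ into $\Jac(\calC)$, uses the Lang--N\'eron theorem to see that $\Jac(\calC)(F)$ is finitely generated modulo the $F/k$-trace, and then either controls the canonical height of the sections directly (via the Gauss--Manin connection and the ``Manin kernel'', which is non-trivial only in the isotrivial case) or invokes the function-field Mordell--Lang statement for the curve $\calC$ inside its Jacobian (Buium, Hrushovski, McQuillan). In every approach, the non-formal ingredient --- positivity of the Hodge bundle, respectively the structure of the Manin kernel --- is precisely what uses the hypotheses $g \ge 2$ and non-isotriviality in an essential way.
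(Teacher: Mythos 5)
The paper does not actually prove Theorem~\ref{thm:geo_mordell}; it cites Manin and Grauert and sketches only Grauert's argument: lift each section $\sigma\colon\calB\to\calX$ to $\PP(\Omega^1_\calX)$ via the surjection $\sigma^*\Omega^1_\calX\to\Omega^1_\calB$, find a global section $\phi$ of a suitable line bundle on $\PP(\Omega^1_\calX)$ whose vanishing locus contains all but finitely many lifted sections (``almost all sections satisfy a first-order ODE''), and deduce from infinitely many such solutions a splitting of the relative tangent sequence, hence vanishing of the Kodaira--Spencer class, hence isotriviality. Your proposal takes the genuinely different Parshin--Arakelov/Vojta route: bound the quantity $d(\sigma)=\omega_{\calX/\calB}\cdot\sigma(\calB)$, then combine rigidity (negative normal bundle forces $\sigma(\calB)$ to be an isolated point of its Hilbert scheme) with a degree bound to get finiteness. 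Both are legitimate. Grauert's approach is the one the paper foregrounds because it anticipates jet-differential methods in higher dimension (cf.\ the reference to Mourougane), while yours is closer to the Shafarevich-conjecture circle of ideas that feeds into Faltings' number-field proof, which the paper only gestures at through Mazur's survey.

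Two steps in your sketch need repair. The height inequality $d(\sigma)\le c$, which you correctly flag as the crux, does not follow merely from ``positivity of the Hodge bundle $f_*\omega_{\calX/\calB}$'': that gives Arakelov's inequality for the family itself, not a bound on sections. The uniform bound on $d(\sigma)$ is precisely where Grauert's ODE, Manin's Gauss--Manin argument, or Vojta's tautological/Wronskian inequality is doing the work --- and your parenthetical reformulation via $\overline{\mathcal{M}}_{g,1}$ being of log general type is a restatement of the Lang--Vojta conclusion rather than a proof of it. Second, your boundedness step ``Hodge index applied to $\omega_{\calX/\calB}$, $F$, and $C$'' does not force the numerical class of $C$ into a finite set: knowing $C\cdot F$, $C\cdot\omega_{\calX/\calB}$ and $C^2$ only controls the Gram matrix of a rank-three sublattice, and $\operatorname{NS}(\calX)$ may be much larger, so $C\cdot H$ for a polarization $H$ is not determined. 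The argument you actually want is Kodaira's lemma: since $\omega_{\calX/\calB}$ is big, write $m\omega_{\calX/\calB}=H+E$ with $H$ ample and $E$ effective, so that every section $C$ not contained in $\supp E$ satisfies $C\cdot H\le m\,d(\sigma)$, while only finitely many sections can lie inside the fixed curve $\supp E$. With that substitution, your rigidity argument --- including the $d(\sigma)=0$ case where moving in a family produces a second fibration and hence a birational product structure --- is sound, and the overall strategy proves the contrapositive of the theorem as you intend.
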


Theorem \ref{thm:geo_mordell} was proved in the sixties by Manin \cite{Manin1963} (although with a gap fixed by Coleman \cite{Coleman1990}) using analytic arguments, and later by Grauert \cite{Grauert1965} using algebraic methods. Samuel in \cite{Samuel1966} gave a proof in characteristic $p$ using ideas of Grauert. A detailed explanation of Grauert methods can be found in Samuel's survey \cite{SamuelTata}. In Mazur's detailed discussion of Faltings' proof of Mordell Conjecture \cite{Mazur1987}, Mazur stresses the role of Arakelov \cite{Arakelov1971} and Zahrin's \cite{Zarhin1974} results which imply new proofs of the Geometric Mordell Conejcture, using ideas of Parshin: this gives even more importance to the geometric case.

One of the ideas of Grauert's proof, which is central in some of the higher dimensional extensions is the following: suppose $\calC$ is a curve defined over a function field $F$ of a curve $\calB$, corresponding to a fibration $\pi: X \to \calB$. Then one can prove that almost all sections of the fibration, which correspond to rational points, verify a first order differential equation, i.e. almost all sections are tangent to a given horizontal vector field. Formally each section $\sigma: \calB \to X$ can be lifted to the projective bundle $\calB \to \PP(\Omega^1_X) = \Proj(\Sym(\Omega_X^1))$ via the surjective map $\sigma^*\Omega^1_X \to \Omega^1_\calB$. Grauert proves (in a different language) that there exists a section $\phi$ of a suitable line bundle over $\PP(\Omega^1)$ whose zero section contains all but finitely many images of sections. Grauert then concludes that if infinitely many sections exist, given the fact that they satisfy the differential equation given by $\phi = 0$, a splitting is provided for the relative tangent sequence which implies that the family is isotrivial (via the vanishing of the Kodaira-Spencer class).

In particular, Grauert's construction gives first insights towards the theory of jet spaces which occupy a central role in some degeneracy results in the complex analytic setting. In this direction, recent analogues of Theorem \ref{thm:geo_mordell} in higher dimension have been proved by Mourougane \cite{Mourougane} for very general hypersurfaces in the projective space of high enough degree using proper extensions of the ideas briefly described above.

\subsection{Vojta Conjecture for Function Fields}\label{ssec:vojta_ff}
Since function fields possess a theory of heights analogous to the theory over number fields, one can translate Vojta's Main Conejcture \ref{conj:Vojta} to the function field case. The main conjecture implies the following height bound for varieties of log general type type over function fields.

\begin{conjecture}\label{conj:vojta_ff}
Let $(\calX,\calD)$ be a pair over a function field $F = k(B)$ whose generic fiber $(X,D)$ is a pair of log general type. Then, for every $\varepsilon > 0$ there exists a constant $C$ and a proper closed subvariety $Z$ such that for all $P (\in \calX \setminus Z) (\overline{F})$ one has
\begin{equation}
    \label{eq:vojta_ff}
    h_{K_\calX + D}(P) \leq C (\chi(P) + N^{(1)}_D(P)) +O(1)
\end{equation}
where, given a point $P \in \calX(L)$ corresponding to a cover $B_P \to B$ of degree $n$, corresponding to the field extension $L \supset F$, we have that $\chi(P) = \chi(B_P)/n$. Moreover, the truncated counting function $N^{(1)}_D (P)$ is the cardinality of the support of $P^*D$.
\end{conjecture}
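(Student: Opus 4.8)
The plan is to obtain the inequality \eqref{eq:vojta_ff} as a formal consequence of the function-field translation of Vojta's main conjecture \ref{conj:Vojta}, in exactly the spirit in which Propositions \ref{prop:V->BL} and \ref{prop:V->LV} deduce the Bombieri--Lang and Lang--Vojta conjectures from it over number fields. First I would record the geometric input: since the generic fibre $(X,D)$ is of log general type, $K_X+D$ is big, so by Kodaira's lemma (Remark \ref{rmk:big}) a positive multiple of it — which, following the convention adopted in the number-field proofs, we take to be $K_X+D$ itself — decomposes as $B+E$ with $B$ ample and $E$ effective.

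Next I would transport the height machinery of Section \ref{sec:heights} to the function field $F=k(B)$ using the dictionary of Table \ref{tab:NFFF}: places of $k$ are replaced by points of the covering curve $B_P\to B$ attached to $P$, and the proximity and counting functions $m_{S,D},N_{S,D}$ by their function-field analogues. The one new feature is that, because $P$ ranges over $(\calX\setminus Z)(\overline F)$ rather than over a fixed rational-point set, the relevant form of Conjecture \ref{conj:Vojta} is the \emph{general} one, which carries an arithmetic-discriminant term; under the dictionary this term is identified, via Riemann--Hurwitz applied to $B_P\to B$, with $\chi(P)$ up to an additive constant (the degree of the ramification divisor of $B_P\to B$, divided by $n$, equals $\chi(P)-\chi(B)$). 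Thus the translated conjecture reads
\[ m_{S,D}(P) + h_{K_X}(P) \le C\,\chi(P) + \epsilon\,h_A(P) + O(1), \]
and adding $N_{S,D}(P)$ to both sides, together with $h_{K_X+D} = h_{K_X}+h_D+O(1)$ and $h_D = m_{S,D}+N_{S,D}$, yields $h_{K_X+D}(P) \le C\,\chi(P) + N_{S,D}(P) + \epsilon\,h_A(P) + O(1)$. Absorbing the $\epsilon h_A$ term is harmless — over function fields one expects to take $\epsilon=0$ at the cost of enlarging $C$, alternatively one uses that $h_A$ is controlled by $\chi(P)$ on the complement of a fixed closed set — and this already gives a bound of the shape \eqref{eq:vojta_ff}, but with the \emph{untruncated} counting function $N_{S,D}$ in place of $N^{(1)}_D$.

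The genuine obstacle is precisely the passage from $N_{S,D}$ to the truncated $N^{(1)}_D$: this is the function-field counterpart of sharpening Vojta's conjecture to its ``truncated level one'' form (the geometric analogue of the $abc$ conjecture), and it cannot be extracted from the bare height axioms of Definitions \ref{def:WHeight} and \ref{def:locHeight}. Here one must feed in a derivation-theoretic estimate — Wronskians, logarithmic derivatives, or jet differentials — and it is at this point that the log-general-type hypothesis, in the form of positivity of $\Omega^1_X(\log D)$, enters essentially. In the unconditional cases treated in this section (for instance when $\Omega^1_X(\log D)$ is almost ample and globally generated, the function-field analogue of Theorem \ref{thm:main}) I would run the quasi-Albanese argument of Section \ref{sec:semiabelian} to reduce to a semi-abelian target on which almost ampleness excludes semi-abelian subvarieties, and combine it with Grauert/Mourougane-type differential inequalities, which naturally produce the truncated count. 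So the argument splits into a soft part (Kodaira's lemma plus the translated Vojta inequality, giving the untruncated bound) and a hard part (the truncation), with essentially all the difficulty — and all the use of the geometric hypothesis on $\Omega^1_X(\log D)$ — concentrated in the latter.
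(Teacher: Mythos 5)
The statement you are addressing is a \emph{conjecture}, and the paper does not prove it. The sentence preceding it (``The main conjecture implies the following height bound for varieties of log general type over function fields'') is an informal pointer, not a claim with a proof attached; nothing in Section \ref{sec:ff} carries out the derivation, and indeed conj:vojta\_ff is later invoked only as a hypothesis (it implies conj:alghyp) and as the shape of bound expected in Theorem \ref{th:noguchi} and Expectation \ref{th:Carlo}. So there is no paper proof to compare against, and any ``proof'' of the statement as written would be a solution to an open problem.

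That said, your analysis of the conditional implication is substantially on target and actually sharpens what the paper says. You are right that the version of Vojta's conjecture the paper states (Conjecture \ref{conj:Vojta}) is for $k$-rational points, produces the \emph{untruncated} counting function $N_{S,D}$ after the rearrangement $h_{K_X+D}=m_{S,D}+N_{S,D}+h_{K_X}+O(1)$, and carries no discriminant term; whereas conj:vojta\_ff is for $\overline F$-points, uses the \emph{truncated} $N^{(1)}_D$, and replaces $\epsilon\,h_A$ by a multiplicative constant $C$ in front of $\chi(P)$. To get from one to the other one needs Vojta's \emph{general} conjecture with the arithmetic-discriminant/ramification term (identified with $\chi(P)$ via Riemann--Hurwitz, exactly as you say) \emph{and} the truncated refinement, neither of which is stated in the paper. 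Flagging the truncation as the genuine content is therefore a correct and useful observation; the paper's ``implies'' glosses over it.

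Where your proposal overreaches is in the final paragraph: the quasi-Albanese machinery of Section \ref{sec:semiabelian} and the Grauert/Mourougane differential methods are discussed in the paper as tools for \emph{unconditional} special cases (Theorem \ref{th:noguchi}, Expectation \ref{th:Carlo}), not as a route to the general conjecture, and nothing in the paper suggests they close the untruncated-to-truncated gap in general. It would be cleaner to stop after the untruncated bound, note explicitly that the conjecture as written is strictly stronger than what follows from conj:Vojta, and observe that the paper itself leaves that strengthening as part of the conjectural package rather than deriving it.
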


Note that for varieties of log general type the height in \eqref{eq:vojta_ff} is associated to a big divisor. In this case, if the set of points of bounded height is Zariski dense then the model is isotrivial.
Moreover, if one considers only points defined over $F$ then the characteristic of the point $P$ reduces to $2(g(B)) - 2$ and one recovers the usual conjecture for $(D,S)$-integral points where $\# S \geq N^{(1)}_D(P)$.

In this latter case one can related Conjecture \ref{conj:vojta_ff} to hyperbolicity using the following result of Demailly.

% will not be the focus of the following parts of this Thesis but rather another related formulation slightly weaker that comes from the following observation made in the holomorphic curves setting. Recall that a complex compact manifold is said to be (Brody) \emph{hyperbolic} if every holomorphic map from $\C$ to the manifold is constant; in the compact case Brody hyperbolicity is equivalent to Kobayashi hyperbolicity, equivalence following from Brody's Lemma (see for details \cite{Brody1978}). In particular, for algebraic manifolds, hyperbolicity would imply that all holomorphic maps from abelian varieties must be constant. Kobayashi \cite{Kobayashi1970} and Lang \cite{Lang1986} made conjectures about the relationship between hyperbolicity and algebraic properties of the manifolds and whether such properties would imply, or being equivalent to, hyperbolicity. One consequence of hyperbolicity for compact manifolds is the following: 

\begin{theorem}[Demailly \cite{Demailly1997}]\label{th:AlgHypDemailly}
Let $X$ be a projective complex variety embedded in some projective space for a choice of a very ample line bundle. Then if the associated manifold is Kobayashi hyperbolic the following holds: there exists a constant $A>0$ such that each irreducible curve $\calC \subset X$ satisfies
\[
\deg \calC \leq A (2 g(\tcalC) - 2) = A \chi(\tcalC),
\]
where $\tcalC$ is the normalization of $\calC$.
\end{theorem}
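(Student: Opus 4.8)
The statement is vacuous unless $g(\tcalC)\ge 2$, and I would dispose of the other cases first: a compact Kobayashi hyperbolic variety is Brody hyperbolic, so $X$ admits no nonconstant holomorphic map from $\bC$; composing the normalization $\nu\colon \tcalC\to X$ with a universal covering map would produce such a map if $\tcalC$ had genus $0$ or $1$. Hence every irreducible curve $\calC\subset X$ has $g(\tcalC)\ge 2$ (in particular $2g(\tcalC)-2>0$), and it suffices to bound $\deg\calC=\int_{\tcalC}\nu^*\omega$, where $\omega$ is the restriction to $X$ of the Fubini--Study form normalized so that $[\omega]=c_1(\calO(1))$.

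The approach I would take is a comparison of infinitesimal metrics. Since $X$ is compact and Kobayashi hyperbolic, Royden's semicontinuity theorem shows that the Kobayashi--Royden pseudometric $k_X$ is continuous, and, being a nondegenerate Finsler metric on a compact space, it is uniformly bounded below: there is $c>0$ with $k_X(\xi)\ge c\,\lVert\xi\rVert_\omega$ for every tangent vector $\xi$ (on a singular $X$ one argues on $X_{\mathrm{reg}}$ together with its strata, using the complex-space version of the Kobayashi metric as in Demailly's paper). The normalization $\nu$ is holomorphic, hence distance-decreasing for Kobayashi metrics: $k_X(d\nu(v))\le k_{\tcalC}(v)$ for $v\in T\tcalC$. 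Because $g(\tcalC)\ge 2$, $k_{\tcalC}$ equals a fixed constant multiple of the hyperbolic metric $\omega_{\mathrm{hyp}}$ of constant negative curvature on $\tcalC$ (push-forward of the Poincar\'e metric under the universal covering). Combining the two inequalities gives $\nu^*\omega\le C\,\omega_{\mathrm{hyp}}$ as $(1,1)$-forms on $\tcalC$, with $C$ depending only on $c$; integrating and applying Gauss--Bonnet, $\int_{\tcalC}\omega_{\mathrm{hyp}}=2\pi\bigl(2g(\tcalC)-2\bigr)$, whence $\deg\calC\le A\bigl(2g(\tcalC)-2\bigr)$ with $A=2\pi C$.

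An alternative argument, which uses only Brody hyperbolicity, proceeds by contradiction and a normal families argument. If no such $A$ existed, pick irreducible curves $\calC_n$ with $\deg\calC_n/(2g(\tcalC_n)-2)\to\infty$; equip $\tcalC_n$ with its hyperbolic metric, of total area $2\pi(2g(\tcalC_n)-2)$ by Gauss--Bonnet, so the density $u_n=\nu_n^*\omega/\omega_{\mathrm{hyp}}$ has average $\to\infty$ and attains a maximum $M_n\to\infty$ at some $p_n\in\tcalC_n$. Rescaling a hyperbolic disk of fixed radius around $p_n$ by the factor $\sqrt{M_n}$ produces holomorphic maps $f_n\colon\{\,|\zeta|<r_n\,\}\to X$ with $r_n\to\infty$, $\lVert f_n'(0)\rVert_\omega=1$, and $\lVert f_n'\rVert_\omega$ uniformly bounded (by homogeneity of the hyperbolic metric together with the maximality of $u_n$ at $p_n$); since $X$ is a closed subvariety of $\PP^N$, Montel's theorem yields a nonconstant entire curve $\bC\to X$ in the limit, contradicting Brody hyperbolicity.

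The main obstacle in the first route is the uniform lower bound $k_X\ge c\,\lVert\cdot\rVert_\omega$: this is where compactness is genuinely used (via Royden's theorem and a limiting argument over the projectivized tangent bundle), and some care is required near $\operatorname{Sing}(X)$. In the second route the delicate point is the uniform derivative bound for the rescaled maps $f_n$, i.e.\ Brody's reparametrization lemma. Everything else --- the reduction to $g(\tcalC)\ge 2$, the distance-decreasing property of $\nu$, and Gauss--Bonnet --- is routine.
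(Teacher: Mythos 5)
The paper does not prove this statement at all; it is simply cited as a theorem of Demailly, so there is no internal proof to compare against. Your first argument is, in substance, Demailly's own proof: reduce to $g(\tcalC)\geq 2$ by Brody hyperbolicity, dominate the Fubini--Study form by the Kobayashi--Royden metric using compactness, pull back via the distance-decreasing normalization map, identify the Kobayashi metric of a hyperbolic curve with the Poincar\'e metric, and integrate with Gauss--Bonnet. This is correct. Two small cautions. First, you write that ``Royden's semicontinuity theorem shows that $k_X$ is continuous,'' which compresses the logic: Royden's general result gives upper semicontinuity of the infinitesimal pseudometric, while genuine continuity (or, what the argument really needs, a uniform lower bound on the unit $\omega$-sphere bundle of $T_X$) uses that a compact hyperbolic space is taut, or equivalently a Brody-type renormalization argument; you should either cite the taut case explicitly or observe that nonexistence of the lower bound would produce a nonconstant entire curve by Brody's lemma. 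Second, the singular case requires the complex-space version of the Kobayashi metric, as you flag, and is treated in Demailly's paper; it is a real but standard technical point. Your alternative normal-families argument is a valid variant, but the ``uniformly bounded derivative'' step is exactly the content of Brody's reparametrization lemma, so leaving it as an informal remark means that second route is still a sketch rather than a proof.
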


Following this result, Demailly introduced the following notion.

\begin{definition}\label{def:alghyp}
A smooth projective variety $X$ is \emph{algebraically hyperbolic} if there exists a constant $A$ such that for each irreducible curve $\calC \subset X$	the following holds:
\[
\deg \calC \leq A \chi(\tcalC).
\]
\end{definition}

Using strong analogies between hyperbolicity and degeneracy of rational points Lang conjectured that a general type variety should be hyperbolic outside a proper exceptional set and therefore one could also conjecture that the variety should be algebraically hyperbolic outside that set (for more on algebraically hyperbolic varieties we refer to \cite{jav,jav2}). This allows one to rephrase Conjeture \ref{conj:vojta_ff} as follows.

\begin{conjecture}[Lang-Vojta for function fields]\label{conj:alghyp}
Given an affine variety $X$ embedded as $\overline{X} \setminus D$ for a smooth projective variety $\overline{X}$ and a normal crossing divisor $D$, if $X$ is of log general type then there exists a proper subvariety $Exc$ (called the \emph{exceptional set}) such that there exists a bound for the degree of images of non-constant morphisms $\calC \to X$ from affine curves whose image is not entire contained in $Exc$, in terms of the Euler Characteristic of $\calC$.
\end{conjecture}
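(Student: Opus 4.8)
The conjecture as stated is the function-field Lang--Vojta conjecture, which is open; the plan I would follow is to \emph{deduce it from Conjecture~\ref{conj:vojta_ff} in the ``constant'' case} --- this is the precise sense in which the present statement is a rephrasing --- and then to record the special geometries where the input is unconditionally available. Fix a smooth projective $\overline X$ with $D=\overline X\setminus X$ of normal crossings and an ample line bundle $H$ on $\overline X$. Given a non-constant morphism $f\colon\calC\to X$ from a smooth affine curve with smooth completion $\widetilde\calC$ and $\#(\widetilde\calC\setminus\calC)=s$, extend it to $\bar f\colon\widetilde\calC\to\overline X$ and regard $\bar f$ as a section over $F=k(\widetilde\calC)$ of the constant model $(\overline X\times\widetilde\calC,\,D\times\widetilde\calC)$; then $\chi(\bar f)=2g(\widetilde\calC)-2$, and since $f$ avoids $D$ we have $N^{(1)}_D(\bar f)=\#\bar f^{-1}(D)\le s$, so $\chi(\bar f)+N^{(1)}_D(\bar f)\le 2g(\widetilde\calC)-2+s=\chi(\calC)$. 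As $(X,D)$ is of log general type, write $K_{\overline X}+D=B+E$ with $B$ ample and $E$ effective. Conjecture~\ref{conj:vojta_ff}, applied to $\bar f$, supplies a proper closed $Z\subsetneq\overline X$ and a constant $C$ with $\deg\bar f^*(K_{\overline X}+D)\le C\,\chi(\calC)+O(1)$ whenever $f(\calC)\not\subset Z$ (the contribution of $K_{\widetilde\calC}$ being absorbed into $\chi(\calC)$). Putting $Exc:=Z\cup\mathrm{Supp}(E)$, for $f(\calC)\not\subset Exc$ one has $\deg\bar f^*E\ge 0$, hence $\deg\bar f^*B\le\deg\bar f^*(K_{\overline X}+D)$; since $B-\varepsilon H$ is ample for small $\varepsilon>0$ and $\bar f$ is non-constant, $\varepsilon\deg\bar f^*H\le\deg\bar f^*B$, which gives $\deg_H\overline{f(\calC)}\le (C/\varepsilon)\,\chi(\calC)+O(1)$ --- the asserted bound (the $O(1)$ being harmless for all but finitely many isomorphism types of $\calC$).

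The conclusion becomes \emph{unconditional} when $\Omega^1_{\overline X}(\log D)$ is almost ample, by refining the argument of Theorem~\ref{thm:almostample}. Then $\calO_{\PP(\Omega^1_{\overline X}(\log D))}(1)$ is big, so for a small rational $\varepsilon>0$ the twist $\calO(1)\otimes\pi^*\calO(-\varepsilon H)$ is still big with stable base locus mapping under $\pi\colon\PP(\Omega^1_{\overline X}(\log D))\to\overline X$ to a proper closed $Exc$. For $f(\calC)\not\subset Exc$, set $Y:=\overline{f(\calC)}$, let $\nu\colon Y^\nu\to Y$ be the normalization and $E^\nu:=\nu^{-1}(D)$; then $\big(\Omega^1_{\overline X}(\log D)\otimes\pi^*\calO(-\varepsilon H)\big)\big|_Y$ is big, hence ample on the curve $Y$, and the logarithmic pullback of \cite[Theorem~4.3]{gkkp} gives a generically surjective $\nu^*\big(\Omega^1_{\overline X}(\log D)|_Y\big)\to\calO_{Y^\nu}(K_{Y^\nu}+E^\nu)$, whence $\varepsilon\deg_H Y\le\deg\calO_{Y^\nu}(K_{Y^\nu}+E^\nu)=2g(Y^\nu)-2+\#E^\nu\le\chi(\calC)$, so $A=1/\varepsilon$ works with no error term. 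The only other cases where the required positivity is known are very general hypersurfaces of sufficiently high degree in $\PP^n$, covered by Mourougane's theorem \cite{Mourougane} (with the analytic counterpart in Theorem~\ref{thm:brotbekdeng}).

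To attack the general case directly I would imitate Grauert's proof of Theorem~\ref{thm:geo_mordell} and Noguchi's hyperbolicity theorem via logarithmic jet differentials. Form the $k$-th logarithmic jet tower $\pi_k\colon X^{\log}_k\to\overline X$ with tautological bundle $\calO_k(1)$ and seek, for $k$ large, a proper closed $Z_k\subsetneq X^{\log}_k$ and $\varepsilon>0$ with $H^0\big(X^{\log}_k,\,\calO_k(m)\otimes\pi_k^*\calO(-\varepsilon mH)\big)\ne 0$ for $m\gg 0$; set $Exc:=\pi_k(Z_k)$. A non-constant $f\colon\calC\to X$ with $f(\calC)\not\subset Exc$ lifts to $f_{[k]}\colon\widetilde\calC\to X^{\log}_k$ missing the base locus, hence meeting the zero divisor of such a section, and the logarithmic tautological (Ahlfors) inequality --- valid over function fields in the style of Vojta--McQuillan --- bounds $\deg f_{[k]}^*\calO_k(1)$ linearly in $2g(\widetilde\calC)-2+\#(\widetilde\calC\setminus\calC)=\chi(\calC)$, producing the bound with $A$ depending only on $k$. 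The main obstacle is the existence of the logarithmic jet differential vanishing on an ample divisor with controlled base locus, starting from bigness of $K_{\overline X}+D$ alone: this is the logarithmic Green--Griffiths--Lang problem, open in general and known only in special geometries such as the hypersurface case above. A secondary subtlety is that the base loci $\pi_k(Z_k)$ need not be nested in $k$, so one must pass to the Demailly--Semple tower and show that the relevant asymptotic base locus stabilizes, in order to obtain a single exceptional set $Exc$ serving all $f$ simultaneously.
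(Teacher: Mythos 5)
The statement is a conjecture, and the paper's only ``proof'' is the single remark immediately following it, namely that Conjecture~\ref{conj:vojta_ff} implies Conjecture~\ref{conj:alghyp}. Your first paragraph supplies exactly the bookkeeping that remark takes for granted: regard $\bar f\colon\widetilde\calC\to\overline X$ as a section of the constant model over $\widetilde\calC$, observe $\chi(P)+N^{(1)}_D(P)\le 2g(\widetilde\calC)-2+\#(\widetilde\calC\setminus\calC)=\chi(\calC)$, write $K_{\overline X}+D=B+E$ by Kodaira's Lemma (the same decomposition the paper uses in Propositions~\ref{prop:V->BL} and~\ref{prop:V->LV} to deduce the number-field Lang and Lang--Vojta conjectures from Vojta), and set $Exc=Z\cup\operatorname{Supp}(E)$. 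So your core deduction matches the paper's intended argument, only written out. One subtlety that neither you nor the paper addresses explicitly: Conjecture~\ref{conj:vojta_ff} produces a proper closed $Z$ inside the generic fiber over $F=k(\widetilde\calC)$ and a constant $C$, a priori depending on the base curve $\widetilde\calC$; to get a single $Exc\subsetneq\overline X$ over $k$ and a uniform $A$ valid for \emph{all} $\calC$, one needs either to argue that for a constant model $Z$ descends to $k$ and $C$ is independent of $\widetilde\calC$, or to fix a single base $B$ and then keep track of the degree-$n$ normalization built into $\chi(P)=\chi(B_P)/n$. This is a standard and believable point but not a formal consequence of the conjecture as stated, and you should flag it rather than write ``supplies a proper closed $Z\subsetneq\overline X$'' as if it were immediate. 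Likewise, absorbing the $O(1)$ into the bound ``in terms of $\chi(\calC)$'' is a small but real notational choice that should be made explicit. Your second paragraph, making the conclusion unconditional under almost ampleness of $\Omega^1_{\overline X}(\log D)$ by a quantitative version of the curve case of Theorem~\ref{thm:almostample} (twist $\calO_{\PP(\Omega^1_{\overline X}(\log D))}(1)$ by $-\varepsilon\pi^*H$ and bound $\varepsilon\deg_H Y$ by $\deg(K_{Y^\nu}+E^\nu)$), goes beyond anything in the paper's function-field section and is consistent in spirit with Expectation~\ref{th:Carlo}; it looks correct. The jet-differential sketch in your third paragraph is not in the paper, but correctly identifies the logarithmic Green--Griffiths--Lang existence problem and the stabilization of base loci as the genuine obstacles.
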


By the previous remark it is easy to see that Conjecture \ref{conj:vojta_ff} implies Conjecture \ref{conj:alghyp}.

We note that most of the known techniques used for the number field case can be used to prove analogous results in the function field setting. However, due to the presence of tools that are not available over number fields, most notably the presence of derivation, one can obtained stronger results that lead to cases of Conjecture \ref{conj:alghyp} and Conjecture \ref{conj:vojta_ff} in settings that are currently out of reach in the function function field case. We refer to the articles \cite{Voloch,BM,Chen,Wang,PR,CZConic,Mourougane,CZGm,Amos,CaTur} as some examples of results over function fields along these lines.

\begin{remark} For the sake of completion, we discuss briefly how \emph{algebraic hyperbolicity} fits in with our previous discussions on hyperbolicity (see \cite[Example 6.3.24]{lazarsfeld2}.
\begin{itemize}
    \item If $X$ is algebraically hyperbolic, then $X$ contains no rational or elliptic curves.
    \item If $X$ is algebraically hyperbolic then there are no non-constant maps $f: A \to X$ from an abelian variety $A$.
    \item Kobayashi (and thus Brody) hyperbolicity implies algebraic hyperbolicity for projective varieties.
\end{itemize}
Furthermore, a theorem of Kobayashi (see \cite[Theorem 6.3.26]{lazarsfeld2}) states that if $\Omega^1_X$ is ample, then $X$ is algebraically hyperbolic. 
\end{remark}

\subsection{Moriwaki for Function Fields}
The analogue of Theorem \ref{thm:moriwaki} over function fields is the following theorem due to Noguchi.

\begin{theorem}[Noguchi \cite{noguchi}]\label{th:noguchi}
Let $X$ be a smooth variety over a function field $F$. If $\Omega^1_X$ is ample then the conclusion of Conjecture \ref{conj:vojta_ff} holds.
\end{theorem}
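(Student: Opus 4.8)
The plan is to carry out, over the function field $F=k(B)$, the geometric mechanism behind Demailly's implication ``ample cotangent bundle $\Rightarrow$ algebraic hyperbolicity'' (Theorem \ref{th:AlgHypDemailly}), but relatively over the base curve $B$. Since $\Omega^1_X$ is ample the divisor $D$ of Conjecture \ref{conj:vojta_ff} is empty (and $X$ is projective, as in Theorem \ref{thm:moriwaki}), so $N^{(1)}_D(P)\equiv 0$ and the target inequality reads $h_{K_\calX}(P)\le C\,\chi(P)+O(1)$ outside a proper closed subvariety $Z$. The first step is to translate this to intersection theory on a fixed model $\pi\colon\calX\to B$: a point $P\in X(\overline F)$ is defined over $k(B_P)$ for a smooth projective curve $B_P$ with a degree-$n$ cover $B_P\to B$, and corresponds to a section of $\calX\times_B B_P\to B_P$, i.e.\ a multisection $f_P\colon B_P\to\calX$ of degree $n$ over $B$. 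Writing $K_\calX=K_{\calX/B}+\pi^*K_B$ one gets $h_{K_\calX}(P)=\tfrac1n\deg f_P^*\omega_{\calX/B}+(2g(B)-2)$, so it suffices to bound $\deg f_P^*\omega_{\calX/B}\le C\,(2g(B_P)-2)+O(n)$ for all $f_P$ whose image is not contained in a fixed proper closed $Z\subset\calX$; here $\omega_{\calX/B}$ restricts on the generic fibre to $\det\Omega^1_X$, which is ample, so after discarding the finitely many bad fibres $\omega_{\calX/B}$ is relatively ample over $B$.

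The second step is the upper bound, via the tautological (Riemann--Hurwitz) inequality. The differential of $f_P$ gives a nonzero map $f_P^*\Omega^1_\calX\to\Omega^1_{B_P}=\omega_{B_P}$, hence a lift $\widetilde f_P\colon B_P\to\PP(\Omega^1_\calX)$ with $\widetilde f_P^*\calO_{\PP(\Omega^1_\calX)}(1)$ equal to the saturated image of that map, which is $\omega_{B_P}(-R)$ for an effective $R$ supported where $df_P$ degenerates. Effectivity of $R$ gives $\deg\widetilde f_P^*\calO_{\PP(\Omega^1_\calX)}(1)\le 2g(B_P)-2$; and since for a multisection coming from a section over $B$ the map $df_P$ degenerates only over the finitely many bad fibres of $\calX\to B$, whose preimages in $B_P$ number $O(n)$, one even has $\deg R=O(n)$.

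The third and hardest step is the lower bound: ampleness of $\Omega^1_X$ must force $\deg\widetilde f_P^*\calO_{\PP(\Omega^1_\calX)}(1)\ge \delta\,\deg f_P^*\omega_{\calX/B}-O(n)$ for a fixed $\delta>0$. The subtlety is that $\Omega^1_\calX$ is \emph{not} ample: on a fibre $X_b$ it sits in $0\to\calO_{X_b}\to\Omega^1_\calX|_{X_b}\to\Omega^1_{X_b}\to 0$, so $\calO_{\PP(\Omega^1_\calX)}(1)$ is ample only along the vertical subbundle $\PP(\Omega^1_{\calX/B})\subset\PP(\Omega^1_\calX)$ (where relative ampleness of $\Omega^1_{\calX/B}$, inherited from ampleness of $\Omega^1_X$ on the generic fibre by openness of ampleness in the family, applies), while in the remaining direction $\calO_{\PP(\Omega^1_\calX)}(1)$ differs from the effective divisor $\PP(\Omega^1_{\calX/B})$ only by a pullback from $B$. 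The key geometric point is that, because $f_P$ is a section over $B$, the composite $f_P^*\pi^*\Omega^1_B\to f_P^*\Omega^1_\calX\to\omega_{B_P}$ is the identity, so $\widetilde f_P(B_P)$ is transverse to the fibres and is not contained in $\PP(\Omega^1_{\calX/B})$; projecting $\widetilde f_P$ onto the vertical projectivised bundle along this transversality and comparing $\calO(1)$-degrees there converts relative ampleness of $\Omega^1_{\calX/B}$ into the desired lower bound, the $O(n)$ absorbing the pullback from $B$ and the bad-fibre contributions. Combining the two estimates gives $\delta\,\deg f_P^*\omega_{\calX/B}\le 2g(B_P)-2+O(n)$, i.e.\ $h_{K_\calX}(P)\le\tfrac1\delta\,\chi(P)+O(1)$ with $Z=\emptyset$, which is the conclusion of Conjecture \ref{conj:vojta_ff}. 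I expect the real obstacle to be exactly this third step: making the ``transversality to the fibres'' argument precise enough that ampleness of $\Omega^1_X$ on the fibre actually yields a quantitative lower bound for $\deg\widetilde f_P^*\calO(1)$, which is where the function-field analogue of the logarithmic-derivative lemma and careful bookkeeping over the bad fibres of the model are needed.
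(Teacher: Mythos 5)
The paper does not prove Theorem~\ref{th:noguchi}; it is cited directly from Noguchi's work, so there is no internal proof to compare your sketch against.

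On its own terms, your first two steps are sound: the translation to a model $\pi\colon\calX\to B$ and the ``tautological inequality'' $\deg\widetilde f_P^*\calO_{\PP(\Omega^1_\calX)}(1)\le 2g(B_P)-2$ via effectivity of the degeneracy divisor $R$ are both correct and are indeed the standard opening moves (this is essentially Vojta's formulation of the algebraic second main theorem). But step three is where the proof is supposed to happen, and as written it is not an argument. Two concrete problems: first, the assertion that ``the composite $f_P^*\pi^*\Omega^1_B\to f_P^*\Omega^1_\calX\to\omega_{B_P}$ is the identity'' is only correct for sections over $B$ (i.e.\ $F$-rational points, $n=1$); for a multisection of degree $n>1$ it is $d\rho$, the differential of the cover $\rho\colon B_P\to B$, which is an injection with cokernel supported on the ramification of $\rho$, not the identity. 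You can salvage the qualitative conclusion (the lift is generically not in $\PP(\Omega^1_{\calX/B})$ because $d\rho$ is generically nonzero), but the quantitative content you extract from it is not justified. Second, and more seriously, ``projecting $\widetilde f_P$ onto the vertical projectivised bundle along this transversality and comparing $\calO(1)$-degrees'' is not an operation: $\PP(\Omega^1_{\calX/B})\subset\PP(\Omega^1_\calX)$ is a closed subscheme, not a retract, and there is no natural projection whose effect on degrees you can control. You have correctly identified that $\Omega^1_\calX$ itself is not relatively ample (it has the sub $\pi^*\Omega^1_B$), so the lower bound $\deg\widetilde f_P^*\calO(1)\ge\delta\,\deg f_P^*\omega_{\calX/B}-O(n)$ cannot come from naive positivity of $\calO_{\PP(\Omega^1_\calX)}(1)$; but the replacement you propose does not yet produce that inequality. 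The actual mechanism (in Noguchi's proof and in Grauert's antecedent) passes through the relative cotangent $\Omega^1_{\calX/B}$ and the Kodaira--Spencer class, not through a projection inside $\PP(\Omega^1_\calX)$, and that is precisely the nontrivial content you have flagged as missing. A side remark: your parenthetical claim that $\deg R=O(n)$ is also false in general --- the ramification of $\rho$ alone can contribute $2g(B_P)-2-n(2g(B)-2)$ to the degeneracy --- though you do not need it for the upper bound.
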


It is therefore natural to consider the analogous question for pairs. As pointed out several times in these notes, for a pair $(X,D)$, the analogous assumption on the positivity of the log cotangent, is to require that $\Omega^1_X(\log D)$ is almost ample. In this setting the following was suggested to us by Carlo Gasbarri.

\begin{Expectation}
  \label{th:Carlo}
Let $(X_F,D)$ be a log smooth non isotrivial pair over $F$. If $\Omega^1_{X_F/F}(\log D)$ is almost ample then there exists a constant $A$ and a proper closed subset $Z \subsetneq X_F$ such that for every $p \in (X_F \setminus Z)(\overline{F})$ we have that
\begin{equation*}
  %  \label{eq:vojta_ff}
    h_{K_\calX + D}(P) \leq C (\chi(P) + N^{(1)}_D(P)) +O(1)
\end{equation*}
where $P$ is a model of $p$ over $\calC$.
\end{Expectation}

The intuition is as follows: first one obtains a height bound for lifts of sections over the projectivization of the model of the log cotangent sheaf. Then using the almost ample hypothesis together with the non-isotriviality of the pair, one shows that the base locus of the structure sheaf of the projectivized bundle does not dominate the base.

\section{Consequences of Lang's conjecture}\label{sec:consequences}
For the sake of completeness, and due to our personal interests, we conclude these notes with a few consequences of Lang's conjecture.

\subsection{Consequences of Lang's conjecture -- uniformity}\label{sec:uniformity}
Caporaso-Harris-Mazur \cite{CHM} showed that Conjecture \ref{conj:lang} implies that $\#\calC(K)$ in Faltings' Theorem is not only finite, but is also uniformly bounded by a constant $N=N(g,K)$ that does \emph{not} depend on the curve $\calC$.

\begin{theorem}[see \cite{CHM}]
\label{conj:unif}
  Let $K$ be a number field and $g \geq 2$ an integer. Assume Lang's conjecture. Then there exists a number $B=B(K,g)$ such that for any smooth curve $\calC$ defined over $K$ of genus $g$ the following holds: $ \# \calC(K) \leq B(g,K) $
\end{theorem}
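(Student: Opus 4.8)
The plan is to reproduce the Caporaso--Harris--Mazur implication by the ``fibered power'' argument. The moving parts are: (1) passing to a fine moduli space so the relevant curves live in a single algebraic family; (2) the \emph{Correlation Theorem}, which extracts a variety of general type out of high fibered powers of that family; (3) Lang's Conjecture \ref{conj:lang}, which converts ``of general type'' into ``rational points not Zariski dense''; and (4) a Noetherian induction on the moduli base, together with an elementary bound on grids of points lying on a bounded-degree subvariety, to turn non-density into an actual numerical bound.

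First I would reduce to curves carrying a level structure. Fix $\ell\geq 3$; then $M_g[\ell]$, the moduli space of smooth genus-$g$ curves with full level-$\ell$ structure, is a quasi-projective fine moduli space carrying a universal curve $f\colon \mathcal X\to S:=M_g[\ell]$, which we compactify to a projective model. Every smooth genus-$g$ curve $\mathcal C/K$ acquires level structure over an extension $K'/K$ of degree $\leq \#\mathrm{GL}_{2g}(\mathbb Z/\ell)$, so it suffices to bound $\#\mathcal C(K')$ for curves corresponding to points $s\in S(K')$; since this only inflates the final constant, I will from now on write $K$ for the ground field and seek a bound $N$, uniform over $s\in S(K)$, for $\#\mathcal C_s(K)$. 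The key encoding: if $\mathcal C_s$ has at least $n$ distinct $K$-points then, ordering $n$ of them, we obtain a $K$-point of the $n$-fold fibered power $\mathcal X^{(n)}_S:=\mathcal X\times_S\cdots\times_S\mathcal X$ lying over $s$ and away from the big diagonal $\Delta$.

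Now apply the Correlation Theorem of Caporaso--Harris--Mazur \cite{CHM}: since the universal family is non-isotrivial, there is an integer $n$ and a dominant rational map $\varphi\colon \mathcal X^{(n)}_S\dashrightarrow W$ onto a variety of general type $W$. (Over any subvariety of $S$ along which the family is isotrivial this step is skipped: there $\mathcal C_s\cong\mathcal C_0$ is fixed and $\#\mathcal C_s(K)=\#\mathcal C_0(K)<\infty$ by Faltings' Theorem \ref{thm:faltings}.) By Lang's Conjecture \ref{conj:lang}, $W(K)$ is not Zariski dense, so there is a proper closed $Z_W\subsetneq W$ with $W(K)\subseteq Z_W$; setting $Z:=\overline{\varphi^{-1}(Z_W)}\cup \mathrm{Indet}(\varphi)\cup\Delta$ gives a proper closed subvariety $Z\subsetneq \mathcal X^{(n)}_S$ with $\mathcal X^{(n)}_S(K)\subseteq Z$. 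Since $\mathcal X^{(n)}_S$ is irreducible and flat over $S$ with irreducible $n$-dimensional fibers, the locus $S_1\subseteq S$ of $s$ over which $Z_s:=Z\cap\mathcal C_s^{\,n}$ has dimension $n$ is a proper closed subset; for $s\in (S\setminus S_1)(K)$ we get $\mathcal C_s(K)^n\subseteq Z_s$ with $\dim Z_s\leq n-1$ and $\deg Z_s$ bounded independently of $s$ (the $Z_s$ form an algebraic family inside a fixed projective space). An elementary induction on $n$ — each step either a generic vertical line meets $Z_s$ in at most $\deg Z_s$ points, or $Z_s$ contains a whole pencil of vertical lines and one descends to $\mathcal C_s^{\,n-1}$ — then produces a bound $\#\mathcal C_s(K)\leq m_0(\deg Z,n)$ valid for every $s\in(S\setminus S_1)(K)$.

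It remains to treat $s$ in the bad locus $S_1$, and here I would run a Noetherian descending induction on $\dim S$: for every closed $T\subseteq S$ one proves there is a constant $N(T)$ bounding $\#\mathcal C_s(K)$ for all $s\in T(K)$. The inductive step applies the Correlation Theorem and Lang to the family over the non-isotrivial part of $T$, yields as above a bound $m_0$ valid off a proper closed $T_1\subsetneq T$, and invokes the inductive hypothesis on the finitely many components of $T_1$ and on the isotrivial locus (handled by Faltings); the induction terminates since $\dim T_1<\dim T$. Taking $T=S$ gives $N$, and restoring the level-structure reduction gives $B(g,K)$. The main obstacle is unquestionably the Correlation Theorem: it is the one non-formal step, requiring genuine positivity input (Arakelov-type inequalities and properties of the relative dualizing sheaf of fibered powers) to see that high fibered powers of a varying family of genus-$\geq 2$ curves dominate a variety of general type. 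I would also flag two caveats I do not expect to remove: the constant is ineffective (Lang's conjecture is used non-constructively), and obtaining a bound depending on $g$ alone would require, in addition, a suitable uniform or function-field version of Lang's conjecture.
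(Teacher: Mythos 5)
Your overall skeleton matches the Caporaso--Harris--Mazur strategy: encode $n$ rational points as a point of the $n$-fold fibered power, invoke the Correlation (fibered power) Theorem to dominate a variety of general type, apply Lang to trap rational points in a proper closed subset, bound grid points on a bounded-degree subvariety, and run a Noetherian induction on the base. That part is faithful to \cite{CHM}, and your closing caveats (ineffectivity; dependence only on $[K:\Q]$ requires Pacelli) are accurate.

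However, the opening reduction to $M_g[\ell]$ introduces a genuine gap. After adding level-$\ell$ structure you are forced to pass from $K$ to an extension $K'$ of bounded degree, and $K'$ \emph{varies with the curve}. The Lang--Correlation--Noetherian argument, run over $M_g[\ell]$, produces for each fixed number field $L$ a constant $N(L)$ bounding $\#\mathcal C_s(L)$ for $s\in M_g[\ell](L)$; it does \emph{not} produce a bound uniform over all $L$ with $[L:K]\leq d$, because the exceptional locus $Z_W$ furnished by Lang's conjecture depends on $L$, and so does each closed subset produced along the Noetherian induction. Since there are infinitely many extensions $K'$ of bounded degree, $\sup_{K'} N(K')$ has no a priori reason to be finite, and the claim ``this only inflates the final constant'' is unsupported. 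In fact, the statement you would need at this point --- a bound depending only on $[K':\Q]$ --- is precisely Pacelli's refinement, which is a harder theorem than the one you are trying to prove, so the reduction runs backwards.

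The fix, and what \cite{CHM} actually do, is to work with a parameter space already defined over $\Q$ for which every genus-$g$ curve over $K$ is a fiber over a $K$\emph{-point}: the Hilbert scheme of tricanonically embedded genus-$g$ curves in $\PP^{5g-6}$ (or a closely related construction). Since $H^0(\calC, 3K_{\calC})$ is a $K$-vector space and one may choose a $K$-basis, the tricanonical embedding of $\calC$ is defined over $K$ with no extension, and the Hilbert point is a $K$-rational point of the base. One then needs the version of the Correlation Theorem that allows the moduli map from the base to $M_g$ to have positive-dimensional ($\mathrm{PGL}$-orbit) fibers; \cite{CHM} formulate and prove exactly this. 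With that substitution your argument --- Correlation, Lang, grid bound, Noetherian descent, Faltings on zero-dimensional strata --- goes through and recovers the theorem.

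Two smaller remarks. First, over the fine moduli space $M_g[\ell]$ the universal family is never isotrivial along any positive-dimensional subvariety, so your ``skip the isotrivial locus'' clause would only ever apply to isolated points; the isotriviality issue really only appears if one works with non-modular bases such as the Hilbert scheme, where $\mathrm{PGL}$-orbits give positive-dimensional isotrivial loci (handled by Faltings, as you say). Second, the grid-point bound at the end is fine, but it is worth making explicit that the degree of $Z_s$ is bounded because $Z$ is a fixed closed subscheme of a fixed projective family over $S$, so the fiberwise Hilbert polynomials take finitely many values; the bound is then uniform once one restricts to $S\setminus S_1$.
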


 Pacelli \cite{Pacelli} (see also \cite{Aquadratic}), proved that $N$ only depends on $g$ and $[K:\Q]$. Some cases of Theorem \ref{conj:unif} have been proven unconditionally (\cite{krz}, \cite{stoll} and \cite{paz}) depending on the Mordell-Weil rank of the Jacobian of the curve and for \cite{paz}, on an assumption related to the Height conjecture of Lang-Silverman. It has also been shown that families of curves of high genus with a uniformly bounded number of rational points in each fiber exist \cite{dnp}. 

N\"aive translations of uniformity fail in higher dimensions as subvarieties can contain infinitely many rational points. However, one can expect that after removing such subvarieties the number of rational points is bounded. Hassett proved that for surfaces of general type this follows from Conjecture \ref{conj:lang}, and that the set of rational points on surfaces of general type lie in a subscheme of uniformly bounded degree \cite{Hassett}.

The main tool used to prove the above uniformity results is the \emph{fibered power theorem} and was shown for curves in \cite{CHM}, for surfaces \cite{Hassett} and in general by Abramovich \cite{Afpt}. In higher dimensions, similar uniformity statements hold conditionally on Lang's conjecture, and follow from the fibered power theorem under some additional hypotheses that take care of the presence of subvarieties that are not of general type (\cite{AV}).

\subsubsection{Consequences of the Lang-Vojta conjecture -- uniformity}
We saw above that Lang's conjecture had far reaching implications for uniformity results on rational points for varieties of general type. One can analogously ask if the Lang-Vojta conjecture implies uniformity results for integral points on varieties of log general type. It turns out that such results are much more subtle in the pairs case, but we review some of the known results here. 

This question was first addressed in \cite{Aell} when Abramovich asked if the Lang-Vojta conjecture  implies uniformity statements for integral points. Abramovich showed this cannot hold unless one restricts the possible models used to define integral points (see Example \ref{ex:blowup}). Instead, Abramovich defined \emph{stably integral points}, and proved uniformity results (conditional on the Lang-Vojta conjecture), for stably integral points on elliptic curves, and together with Matsuki \cite{AM} for principally polarized abelian varieties. While we do not give a precise definition of stably integral points in these notes, we remark that they are roughly integral points which remain integral after stable reduction. We refer the interested reader to our paper \cite{advt}. 

In \cite{advt}, we prove various generalizations of the work of Abramovich and Abramovich-Matsuki. In particular, we prove that the Lang-Vojta conjecture implies that the set of stably integral points on curves of log general type is uniformly bounded. Additionally, we prove a generalization of Hassett's result, showing that the Lang-Vojta conjecture implies that (stably) integral points on families of log canonically polarized surfaces lie in a subscheme whose degree is uniformly bounded. Finally, we prove, assuming the Lang-Vojta conjecture, and under the assumption that the surfaces have almost ample log cotangent, that the set of stably integral points on polarized surfaces is uniformly bounded. 

  Finally, we note that results present in \cite{advt} have two key ingredients. One is a generalization of the fibered power theorems mentioned in Section \ref{sec:uniformity} to the case of pairs \cite{fpt}. The other, is a generalization of Theorem \ref{thm:almostample}, which gives a condition so that subvarieties of a \emph{singular} surface of log general type are curves of log general type. It turns out that proving a result for stably integral points requires the use of the compact moduli space of stable pairs, and as such, we are forced to work with singular surfaces. 

\subsection{Consequences of Lang's conjecture -- rational distance sets}
A \emph{rational distance set} is a subset $S$ of $\R^2$ such that the distance between any two points of $S$ is a rational number. In 1946, Ulam asked if there exists a rational distance set that is dense for the Euclidean topology of $\R^2$.  While this problem is still open, Shaffaf \cite{shaffaf} and Tao \cite{tao} independently showed that Lang's conjecture implies that the answer to the Erd\H os-Ulam question is `no'. In fact, they showed that if Lang's conjecture holds, a rational distance set cannot even be dense for the Zariski topology of $\R^2$, i.e. must be contained in a union of real algebraic curves. 

Solymosi and de Zeeuw \cite{sz} proved (unconditionally, using Faltings' proof of Mordell's conjecture) that a rational distance contained in a real algebraic curve must be finite, unless the curve has a component which is either a line or a circle.
Furthermore, any line (resp. circle) containing infinitely many points of a rational distance set must contain all but at most four (resp. three) points of the set.  One can rephrase the result of \cite{sz} by saying that almost all points of an infinite rational distance set contained in a union of curves tend to concentrate on a line or circle. It is therefore natural to consider the ``generic situation'', and so we say that a subset $S\subseteq \R^2$ is in \emph{general position} if no line contains all but at most four points of $S$, and no circle contains all but at most three points of $S$. For example, a set of seven points in $\R^2$ is in general position if and only if no line passes through $7-4 = 3$ of the points and no circle passes through  $7-3 =  4$ of the points.

In particular, the aforementioned results show that Lang's conjecture implies that rational distance sets in general position must be finite. With Braune, we proved the following result. 

\begin{theorem}\cite[Theorem 1.1]{erdosulam}
Assume Lang's conjecture. There exists a uniform bound on the
 the cardinality of a rational distance set in general position.
\end{theorem}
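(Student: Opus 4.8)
The plan is to realize rational distance sets of a fixed size $n$ as the $\Q$-points of an explicit variety over $\Q$, to show that the general position hypothesis forces this variety (or the relevant members of a tower of families built from it) to be of general type once $n$ is large, and then to extract a \emph{uniform} bound from Lang's Conjecture \ref{conj:lang} by the Caporaso--Harris--Mazur mechanism used to prove Theorem \ref{conj:unif}: apply the conjecture to a high enough fibered power, which is dominated by a variety of general type by Abramovich's fibered power theorem \cite{Afpt} (and its extension to pairs \cite{fpt,AV}), and run a Noetherian induction on the base. We already know, combining the Shaffaf--Tao argument \cite{shaffaf,tao} with \cite{sz}, that Lang's conjecture makes every rational distance set in general position \emph{finite}; the work is to make the bound independent of the set.

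First I would build the configuration variety. Given a rational distance set $S=\{P_1,\dots,P_n\}\subset\R^2$, normalize by a Euclidean motion so that $P_1=(0,0)$, $P_2=(a,0)$ with $a=d(P_1,P_2)\in\Q_{>0}$. An elementary computation (intersecting circles) shows that, writing $P_i=(x_i,y_i)$, one has $x_i\in\Q$ and $y_i^2\in\Q$, and that rationality of all the $d(P_i,P_j)$ forces the $y_i^2$ to lie in one square class, say $y_i=r_i\sqrt c$ with $r_i\in\Q$ and $c$ a fixed squarefree integer; granting this, $d(P_i,P_j)\in\Q$ becomes the requirement that each of $x_i^2+cr_i^2$, $(x_i-a)^2+cr_i^2$, $(x_i-x_j)^2+c(r_i-r_j)^2$ be a square in $\Q$. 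Adjoining a square-root coordinate for each of these $N(n)=2(n-2)+\binom{n-2}{2}$ conditions exhibits the ordered, normalized rational distance sets of size $n$ as the rational points of an abelian $(\Z/2)^{N(n)}$-cover $V_n$ of an affine space, away from an explicit closed degenerate locus (points colliding, triples collinear, etc.); equivalently $V_n$ is the top of a tower $V_n\to V_{n-1}\to\dots\to V_3$ in which each map ``adjoins one more point at rational distance from the previous ones'' and has fibres of dimension at most two.

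Next I would prove the geometric heart of the matter: for $n$ above an absolute constant and $S$ in general position, a smooth projective model of $V_n$, with the one-dimensional scaling direction divided out (scaling by a rational number preserves rational distance sets), is of general type. Since $V_n$ is an abelian cover of a projective space whose branch divisor is a union of the fixed-degree conics and lines cut out by the quadratic forms above, and the number of sheets $N(n)$ grows, the canonical bundle formula for abelian covers (in the style of Pardini) makes the positive ramification contribution outweigh the fixed negative canonical class of the base, so $\omega$ becomes big. The general position hypothesis is used exactly to ensure the branch components are reduced and sufficiently transverse for this computation, invoking the description in \cite{sz} of how many points of a rational distance set can lie on a common line or circle to rule out the tangencies and concurrences that would otherwise make the cover too singular.

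Finally I would run the uniformity step. For $n$ large the fibered power of the tower is dominated by a variety of general type, so Lang's conjecture confines $V_n(\Q)$ to a proper Zariski-closed $Z_n\subsetneq V_n$; a Noetherian induction on the base $V_3$, following \cite{CHM} line by line, upgrades this to an integer $N$ with $V_n(\Q)$ contained in the degenerate locus for every $n>N$. As the degenerate locus parametrizes configurations that are either not rational distance sets or not in general position (two coinciding points, three collinear points, or all but a bounded number of points on one line or circle), this gives the asserted uniform bound. The main obstacle is the middle step --- ``general position of $S$'' $\Rightarrow$ ``general type of $V_n$'' --- which fuses a Kodaira-dimension computation for a family of abelian covers with the combinatorial input of \cite{sz} controlling the branch configuration; by contrast the final Noetherian induction is the \cite{CHM} argument almost verbatim, the sole new feature being that the fibres of the tower are surfaces rather than curves, which is precisely what the higher-dimensional fibered power results \cite{Afpt,fpt,AV} recalled in Section \ref{sec:uniformity} are built to handle.
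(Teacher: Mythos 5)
Your overall roadmap --- realize rational distance sets as rational points on an explicit configuration variety built as a tower of $(\Z/2)$-covers, show the relevant cover is of general type, then feed the family into the Caporaso--Harris--Mazur/Hassett/Abramovich fibered-power machinery --- is in the same spirit as the argument in \cite{erdosulam}. However, the middle step as you describe it does not quite work, and the way you invoke the general position hypothesis is type-confused. You write that ``general position of $S$'' implies ``general type of $V_n$'', and that general position ``ensures the branch components are reduced and sufficiently transverse''. But $V_n$ is a fixed $\Q$-variety; whether it (or the generic fiber of $V_n \to V_{n-1}$) is of general type is an intrinsic property and cannot depend on which of its $\Q$-points you care about. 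General position is a condition on configurations, i.e.\ on points of $V_n$; it does nothing to improve the branch divisor of the cover $V_n \to \bP^{2n-3}$, which is genuinely singular: each quadric $\{(x_i-x_j)^2+c(r_i-r_j)^2=0\}$ has rank two (singular along the diagonal $x_i=x_j$, $r_i=r_j$), many of them meet along common high-codimension strata (e.g.\ the loci $P_i=P_j$), and they all meet the hyperplane at infinity in a special way. Applying the Pardini formula na\"ively therefore overestimates $\kappa$; you must resolve the branch locus to a normal crossings divisor, and the ensuing discrepancies can and do eat into the positive contribution $\tfrac12\sum D_j$. The computation confirming that enough positivity survives is precisely the technical heart of the problem and is not supplied.

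There is a second, independent gap at the Noetherian-induction stage. You assert that this is ``the \cite{CHM} argument almost verbatim, the sole new feature being that the fibres of the tower are surfaces rather than curves,'' but this change is not cosmetic. For families of curves of genus $\ge 2$, CHM combine the fibered power theorem with Lang to bound the \emph{number} of rational points in every fiber; for families of surfaces of general type, Hassett's extension only confines rational points to a fiberwise subvariety of uniformly bounded degree, which may still carry infinitely many rational points (rational or elliptic curves on the surface). To upgrade this to a numerical bound one must identify the exceptional curves inside the distance surfaces and show, using the Solymosi--de~Zeeuw analysis of lines and circles \cite{sz}, that the general position hypothesis caps the number of configuration points landing on them. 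This interplay between Hassett's surface uniformity and the classification of exceptional curves on the distance surface is where the real argument lives, and it is not present in your sketch. In short: the high-level plan (explicit abelian-cover configuration variety, Kodaira-dimension estimate, fibered power theorem) is the right one, but the two places you wave at as routine --- passing Pardini through the singular branch, and running CHM for surface fibers --- are precisely where the content is, and as written neither step goes through.
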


We can rephrase this theorem as follows. 

\begin{corollary}\label{cor:Lang}\cite[Corollary 1.2]{erdosulam}
%\setcounter{theorem}{0}
%\begin{theorem}[Bis]
If there exist rational distance sets in general position of cardinality larger than any fixed constant, then Lang's conjecture does not hold.
%\end{theorem}
\end{corollary}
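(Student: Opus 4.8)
The plan is to deduce the Corollary directly from the Theorem stated immediately above it (Theorem~1.1 of \cite{erdosulam}), of which it is simply the logical contrapositive. First I would make precise the phrase ``of cardinality larger than any fixed constant'': it means that for every positive integer $N$ there exists a rational distance set $S_N \subseteq \R^2$ in general position with $\#S_N > N$. This is exactly the negation of the assertion ``there is a uniform bound on the cardinality of rational distance sets in general position,'' which is the conclusion of the Theorem under the hypothesis that Lang's conjecture holds.

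Next I would argue by contradiction. Suppose Lang's conjecture holds. Then \cite[Theorem 1.1]{erdosulam} supplies a constant $B$, depending on nothing, such that every rational distance set in general position has cardinality at most $B$. Applying the standing hypothesis with $N = B$ produces a rational distance set in general position of cardinality strictly greater than $B$, contradicting this bound. Hence Lang's conjecture cannot hold, which is precisely the assertion of the Corollary.

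There is no genuine obstacle here: all of the mathematical substance — the reduction of the Erd\H{o}s--Ulam question to rational points on surfaces of general type (via \cite{shaffaf,tao}), the classification of the exceptional loci as lines and circles (via \cite{sz}), and the uniformity/fibered-power input behind the Theorem — is already packaged into the Theorem. The Corollary is a reformulation chosen to emphasize that exhibiting arbitrarily large rational distance sets in general position would yield an \emph{unconditional} disproof of Lang's conjecture; the only point requiring care is verifying that the two formulations are contrapositives of one another, and the first paragraph renders that transparent.
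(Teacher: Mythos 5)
Your proof is correct and matches the paper's implicit treatment: the corollary is the contrapositive of Theorem~1.1 of \cite{erdosulam}, and you make the quantifier structure explicit before carrying out the contradiction. Nothing further is needed.
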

We note that we are unaware of any examples of rational distance sets in general position of cardinality larger than seven (the case of seven answered a question of Erd\H{o}s, see \cite{kk})

%%%%%%%%%%%%%%%%%%%%%%%%%%%%%%%%%%%%%%%%%%%%%%%%%%%%%%%%%%%%%%%
%	BIBLIOGRAPHY
%%%%%%%%%%%%%%%%%%%%%%%%%%%%%%%%%%%%%%%%%%%%%%%%%%%%%%%%%%%%%%%
\bibliographystyle{alpha}	% (uses file "plain.bst")
\bibliography{montreal}
%%%%%%%%%%%%%%%%%%%%%%%%%%%%%%%%%%%%%%%%%%%%%%%%%%%%%%%%%%%%%%%
%%%%%%%%%%%%%%%%%%%%%%%%%%%%%%%%%%%%%%%%%%%%%%%%%%%%%%%%%%%%%%%

\end{document}